\documentclass[11pt,a4paper]{amsart}
\usepackage{indentfirst,latexsym,bm}
\usepackage{amsfonts}
\usepackage{amssymb}
\usepackage{amsmath}
\usepackage{amsbsy}
\usepackage{dsfont}
\usepackage{amsthm}
\usepackage{leftidx}
\usepackage{amscd}
\usepackage[all]{xy}
\usepackage{chemarrow}
\usepackage{color}
\usepackage{multirow}
\usepackage{hyperref}
\usepackage[titletoc]{appendix}
\textwidth 16cm \topmargin -0cm \oddsidemargin 0cm \evensidemargin 0cm \textheight 22cm \headheight 0pt
\begin{document}
\title[On Hopf algebras without the dual Chevalley property]{On  Hopf algebras over the unique $12$-dimensional Hopf algebra without the dual Chevalley property}
\author{Rongchuan Xiong}
\address{School of Mathematical Sciences, Shanghai Key Laboratory of PMMP, East China Normal University, Shanghai 200241, China}
\email{rcxiong@foxmail.com}
\date{}
\maketitle

\newtheorem{question}{Question}
\newtheorem{defi}{Definition}[section]
\newtheorem{conj}{Conjecture}
\newtheorem{thm}[defi]{Theorem}
\newtheorem{lem}[defi]{Lemma}
\newtheorem{pro}[defi]{Proposition}
\newtheorem{cor}[defi]{Corollary}
\newtheorem{rmk}[defi]{Remark}
\newtheorem{Example}{Example}[section]

\theoremstyle{plain}
\newcounter{maint}
\renewcommand{\themaint}{\Alph{maint}}
\newtheorem{mainthm}[maint]{Theorem}

\theoremstyle{plain}
\newtheorem*{proofthma}{Proof of Theorem A}
\newtheorem*{proofthmb}{Proof of Theorem B}
\newcommand{\tabincell}[2]{\begin{tabular}{@{}#1@{}}#2\end{tabular}}

\newcommand{\C}{\mathcal{C}}
\newcommand{\D}{\mathcal{D}}
\newcommand{\A}{\mathcal{A}}
\newcommand{\De}{\Delta}
\newcommand{\M}{\mathcal{M}}
\newcommand{\K}{\mathds{k}}
\newcommand{\E}{\mathcal{E}}
\newcommand{\Pp}{\mathcal{P}}
\newcommand{\Lam}{\lambda}
\newcommand{\As}{^{\ast}}
\newcommand{\Aa}{a^{\ast}}
\newcommand{\Ab}{(a^2)^{\ast}}
\newcommand{\Ac}{(a^3)^{\ast}}
\newcommand{\Ad}{(a^4)^{\ast}}
\newcommand{\Ae}{(a^5)^{\ast}}
\newcommand{\B}{b^{\ast}}
\newcommand{\BAa}{(ba)^{\ast}}
\newcommand{\BAb}{(ba^2)^{\ast}}
\newcommand{\BAc}{(ba^3)^{\ast}}
\newcommand{\BAd}{(ba^4)^{\ast}}
\newcommand{\BAe}{(ba^5)^{\ast}}

\newcommand{\CYD}{{}^{\C}_{\C}\mathcal{YD}}
\newcommand{\DM}{{}_{D}\mathcal{M}}
\newcommand{\BN}{\mathcal{B}}
\newcommand{\HYD}{{}^{H}_{H}\mathcal{YD}}
\newcommand{\Ga}{g^{\ast}}
\newcommand{\Gb}{(g^2)^{\ast}}
\newcommand{\Gc}{(g^3)^{\ast}}
\newcommand{\Gd}{(g^4)^{\ast}}
\newcommand{\Ge}{(g^5)^{\ast}}
\newcommand{\X}{x^{\ast}}
\newcommand{\GXa}{(gx)^{\ast}}
\newcommand{\GXb}{(g^2x)^{\ast}}
\newcommand{\GXc}{(g^3x)^{\ast}}
\newcommand{\GXd}{(g^4x)^{\ast}}
\newcommand{\GXe}{(g^5x)^{\ast}}
\newcommand{\Z}{\mathds{Z}}
\newcommand{\G}{\mathcal{G}}
\newcommand{\I}{\mathds{I}}
\newcommand\ad{\operatorname{ad}}
\newcommand{\Alg}{\Hom_{\text{alg}}}
\newcommand\Aut{\operatorname{Aut}}
\newcommand{\AuH}{\Aut_{\text{Hopf}}}
\newcommand\coker{\operatorname{coker}}
\newcommand\car{\operatorname{char}}
\newcommand\Der{\operatorname{Der}}
\newcommand\diag{\operatorname{diag}}
\newcommand\End{\operatorname{End}}
\newcommand\id{\operatorname{id}}
\newcommand\gr{\operatorname{gr}}
\newcommand\Soc{\operatorname{Soc}}
\newcommand\Top{\operatorname{Top}}
\newcommand\GK{\operatorname{GKdim}}
\newcommand{\Hom}{\operatorname{Hom}}
\newcommand\ord{\operatorname{ord}}
\newcommand{\N}{\mathds{N}}
\begin{abstract}
Let $\mathds{k}$ be an algebraically closed field of characteristic zero. We determine all finite-dimensional Hopf algebras over $\mathds{k}$ whose Hopf coradical is isomorphic to the unique $12$-dimensional Hopf algebra $\mathcal{C}$ without the dual Chevalley property, such that the diagrams are strictly graded and the corresponding infinitesimal braidings are indecomposable objects in ${}_{\mathcal{C}}^{\mathcal{C}}\mathcal{YD}$.
In particular, we obtain new Nichols algebras of dimension $18$ and $36$ and two families of new Hopf algebras of dimension $216$.

\bigskip
\noindent {\bf Keywords:} Nichols algebra; Hopf algebra; generalized lifting method.
\end{abstract}

\section{Introduction}

Let $\mathds{k}$ be an algebraically closed field of characteristic zero. This work contributes to the classification of finite-dimensional Hopf algebras over $\mathds{k}$ without the dual Chevalley property, that is, the coradical is
not a subalgebra. Until now, there are few classification results on such Hopf algebras with non-pointed duals, some exceptions being \cite{GG16,HX17}.

The strategy follows the principle proposed by Andruskiewitsch and Cuadra \cite{AC13}, that is, the so-called generalized lifting method.
Let $A$ be a Hopf algebra over $\K$ without the dual Chevalley property and $A_{[0]}$ the subalgebra generated by the coradical $A_0$ of $A$.  We will say $A_{[0]}$ is the Hopf coradical of $A$. As a generalization of the lifting method \cite{AS98}, the idea of the generalized lifting method is to replace the coradical filtration  with the standard filtration $\{A_{[n]}\}_{n\geq 0}$, which is
defined recursively by $A_{[n]}=A_{[n-1]}\bigwedge A_{[0]}$.
Assume that $S_A(A_{[0]})\subseteq A_{[0]}$, the filtration $\{A_{[n]}\}_{n\geq 0}$ is a Hopf algebra filtration, which implies that  the associated graded coalgebra $\gr A=\oplus_{n=0}^{\infty}\gr^nA$ is a Hopf algebra, where $\gr^nA=A_{[n]}/A_{[n-1]}$ and $A_{[-1]}=0$.
It follows by \cite[Theorem 2]{R85} that there exists uniquely a connected graded braided Hopf algebra $R=\oplus_{n=0}^{\infty}R(n)$  in ${}^{A_{[0]}}_{A_{[0]}}\mathcal{YD}$ such that $\gr A\cong R\sharp A_{[0]}$.  Moreover,  $R(n)=R\cap\gr^nA$ and $R(1)\subset\Pp(R)$. We call $R$ or $R(1)$ the diagram or infinitesimal braiding of $A$, respectively.
If  $A_0$ is a Hopf subalgebra, then the standard filtration coincides with the coradical filtration. $\gr A$ is coradically graded and $R$ is \emph{strictly graded}, that is, $R(0)=\K$, $R(1)=\Pp(R)$.  In general, it is an open question whether the diagram $R$ is strictly graded. See \cite{AS02,AC13} for details.

 Now we outline the strategy. We first fix a finite-dimensional Hopf algebra $\C$, which is generated
 by its coradical $\C_0$. Then we determine those $V\in {}_{\C}^{\C}\mathcal{YD}$ such that $\dim\BN(V)<\infty$
 and present $\BN(V)$ by generators and relations  using the skew-derivation. Finally, we calculate  all possible Hopf algebras $A$  such that $\gr A=\BN(V)\sharp \C$. It should be pointed out that the generalized lifting method was firstly used by G.-A.~Garcia and J.-M.-J.~Giraldi \cite{GG16} to construct new examples of Hopf algebras of dimension $64$.

The present paper is a sequel to \cite{AC13,GG16,HX16}. In \cite{HX16}, the authors fixed a Hopf algebra $\C$ $($See Definition \ref{proStrucOfC}$)$
and constructed some Hopf algebras of dimension $72$ without
the dual Chevalley property but left some questions about Nichols algebras unsolved. Denote by $\D$ the Drinfeld double of $\C^{cop}$. In this paper, we first describe all simple and indecomposable projective $\D$-modules and calculate the decompositions of tensor products of them. As a consequence, we obtain the projective class
ring $r_p(\D)$, that is, $r_p(\D)\cong\Z[y_0,\ldots,y_5]/J$, where
\begin{align*}
J=(y_0^6=1,\ y_iy_3^2=2y_iy_0+y_iy_0^2+y_i,\  y_jy_{6-j}=y_3^2,\ y_ky_l=y_{k+l}+y_{k+l}y_0),
\end{align*}
for $i,j,k,l,k+l\in\I_{1,5}$. Moreover, $\D$ is of wild type. See section \ref{secPresentation} for details.

 From Theorem \ref{thmsimplemoduleD}, the simple objects in $\CYD$ consist of $6$ one-dimensional objects $\K_{\chi^{k}}$ for $k\in \I_{0,5}$ and $30$ two-dimensional objects $V_{i,j}$ for $(i,j)\in \Lambda$, where $\Lambda=\{(i,j)\in \I_{0,5}\times \I_{0,5}\mid 3i\neq j\}$. These braided vector spaces $V_{i,j}$  have already appeared in \cite{Hi93,AGi17}. More precisely,
 $V_{i,0}$ and $V_{i,3}$  belong to the case $\mathfrak{R}_{2,1}$ in \cite{Hi93,AGi17}, and the others belong to the case $\mathfrak{R}_{1,2}$. In particular, as stated in \cite{AGi17}, they are not of diagonal type.

Next, we study Nichols algebras over the indecomposable objects in $\CYD$ and present explicitly the Nichols algebras of finite dimension. We obtain the following result:
\begin{mainthm}\label{thmA}
 Let $\BN(V)$ be a finite-dimensional Nichols algebra over an indecomposable object $V$ in $\CYD$. Then $V$ must be simple and isomorphic
 either to $\K_{\chi^{k}}$ for $k\in\{1,3,5\}$, $V_{1,1}$, $V_{4,2}$, $V_{3,1}$, $V_{2,2}$, $V_{1,4}$, $V_{4,5}$, $V_{2,4}$,
 $V_{3,5}$, $V_{4,4}$, $V_{1,5}$, $V_{4,1}$ or $V_{1,2}$. Moreover, the generators and relations of $\BN(V)$ are given by the following table: \\
\center
\begin{tabular}{|c|c|c|}
  \hline
  $V$           &   relations of $\BN(V)$ with generators $x,y$         & $\dim\BN(V)$ \\\hline
$\K_{\chi^1},l=1,3,5$   &   $x^2=0$                                       &   2   \\\hline
$V_{3,j},j=1,5$       &  $ x^2=0,\  xy+\xi^{-j}yx=0,\  y^3=0$     &  6  \\\hline
$V_{2,j+3},j=1,5$       &   $y^2+\xi x^2=0,\  xy-yx=0,\  x^3=0  $   &  6  \\\hline
$V_{1,j},j=1,5$, &\tabincell{c}{ $x^6=0$, $x^2y+\xi^jyx^2+(1+\xi^j)xyx=0$,\\$x^3+y^2x+xy^2+yxy=0$, $x^2y+yx^2+xyx+y^3=0$ }& 36
\\\hline
$V_{4,j},j=1,5$  &\tabincell{c}{ $x^3=0$, $y^3-x^2y-yx^2+xyx=0$,\\$y^2x+xy^2-yxy=0$, $\xi^jx^2y+\xi^{5j}yx^2+xyx=0$ } &    18
\\\hline
$V_{4,j+3},j=1,5$&\tabincell{c}{ $x^3=0$, $\xi^{2j}x^2y+\xi^{4j}xyx+yx^2=0$,\\$y^6=0$, $y^2x+(\xi^{5j}+\xi^{4j})yxy-xy^2=0$ } & 36          \\\hline
$V_{1,j+3},j=1,5$&\tabincell{c}{ $x^3=0$, $y^3+x^2y+yx^2+xyx=0$,\\$y^2x+xy^2+yxy=0$, $\xi^{2j}x^2y+\xi^{4j}xyx+yx^2=0$} &       18              \\\hline
\end{tabular}
\end{mainthm}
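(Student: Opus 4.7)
The plan is to split the statement into three logically separate claims: (i) if $V$ is indecomposable but not simple, then $\dim\BN(V)=\infty$, so we may restrict to simples; (ii) among the simple objects of $\CYD$ listed in the classification preceding the theorem, exactly the $V$ appearing in the table have $\dim \BN(V)<\infty$; and (iii) for those $V$, the displayed quadratic/cubic/higher polynomials generate the defining ideal and produce the stated dimension.

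\textbf{Reduction to the simple case.} I would use the standard monotonicity fact that if $W\hookrightarrow V$ is a Yetter--Drinfeld subobject then $\BN(W)\hookrightarrow\BN(V)$ as braided Hopf subalgebras. Every indecomposable $V\in\CYD$ contains a simple socle $W\subseteq V$; if $W$ is one of the $\K_{\chi^k}$ with $k\in\{0,2,4\}$, or one of the $V_{i,j}$ not appearing in the table, then already $\dim \BN(W)=\infty$, forcing $\dim\BN(V)=\infty$. This reduces the problem to indecomposable non-simple $V$ whose composition factors all lie in the ``good'' list of the table. For those remaining cases one has to exploit the explicit $\Ext^1$ between the good simples (computable from the projective covers described in the earlier section on $r_p(\D)$) and show that whenever a nontrivial extension $0\to W\to V\to W'\to 0$ of two good simples survives inside $\CYD$, a concrete element of $\BN(V)$ of arbitrarily large degree fails to vanish. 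Typically one picks a generator $v\in V\setminus W$ lifting $W'$ and shows, using skew-derivations, that $v^n$ or a braided commutator like $[v,w]_c^n$ lies outside the relations of $\BN(W)\otimes\BN(W')$.

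\textbf{Handling the simple modules.} For each simple $V$ I would compute the braiding matrix of $c:V\otimes V\to V\otimes V$ explicitly from the coaction by $\C$ and the action of $\C$. The one-dimensional $\K_{\chi^k}$ give diagonal braidings $c(x\otimes x)=\chi^k(g)\, x\otimes x$; only when this scalar is $-1$ (i.e.\ $k=1,3,5$) is $\BN(V)$ finite-dimensional, namely $\K[x]/(x^2)$. For the two-dimensional $V_{i,j}$ the braiding falls into types $\mathfrak{R}_{1,2}$ or $\mathfrak{R}_{2,1}$ of Hietarinta and Andruskiewitsch--Giraldi, and the finite-dimensional Nichols algebras for those rank-two non-diagonal braidings are already classified there. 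The work is then to match each $(i,j)\in\Lambda$ with the correct parameter in those lists, read off whether $\dim\BN(V_{i,j})<\infty$, and thus obtain exactly the thirteen entries of the table.

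\textbf{Generators and relations.} For every surviving simple $V$ I would choose a basis $\{x,y\}$, construct the skew-derivations $\partial_x,\partial_y$ on the tensor algebra $T(V)$ associated with the dual basis, and verify directly that the proposed relations $r$ satisfy $\partial_x(r)=\partial_y(r)=0$; since these derivations detect the kernel of the projection $T(V)\twoheadrightarrow\BN(V)$ on primitives, this places the relations inside the defining ideal. Then I would prove that the quotient $T(V)/(r)$ has the claimed dimension ($6$, $18$, or $36$), by writing down a spanning PBW-type monomial basis and checking it is free. Equality of dimensions forces the relations to be exactly the defining ideal.

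\textbf{Main obstacle.} The genuinely delicate point is Step (i): because $\D$ is of wild representation type, there are infinitely many indecomposable non-simple objects in $\CYD$, and one cannot inspect them individually. The heart of the proof will be a uniform argument bounding $\BN(V)$ from below for every non-split extension whose composition factors are good simples, presumably by exhibiting an explicit infinite family of linearly independent elements in $\BN(V)$ using the braided commutator structure dictated by the $R$-matrix between the relevant simples. Everything else is a direct, if lengthy, computation.
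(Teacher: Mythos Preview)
Your overall skeleton matches the paper's, but you have misidentified where the work lies in Step~(i). The ``main obstacle'' you describe---handling non-split extensions all of whose composition factors are good simples---is vacuous: there are no such extensions. The paper uses two structural facts already established in Section~\ref{secPresentation}: first, every two-dimensional simple $V_{i,j}$ is projective (Lemma~\ref{lemProjectwodimsimple}(2)), hence injective since $\D$ is Frobenius, so no $V_{i,j}$ occurs in the socle or top of a non-simple indecomposable; second, $\Ext^1_\D(\K_{\chi^i},\K_{\chi^j})\neq 0$ only when $i-j$ is odd (Lemma~\ref{lemTwononsimpleindecom}(2)), so any length-two indecomposable built from one-dimensional simples has one factor with even index and therefore an infinite Nichols algebra already in its socle or its top. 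The induction in Proposition~\ref{proNicholsindecom} then reduces every non-simple indecomposable to this length-two case. Your proposal to ``exhibit an explicit infinite family of linearly independent elements in $\BN(V)$ using braided commutators'' is unnecessary; if you actually carried out the $\Ext^1$ computation you mention, you would discover it is zero between any two good simples and the case disappears.

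For Step~(ii) the paper does \emph{not} read off finiteness from the Andruskiewitsch--Giraldi tables; those references identify the braiding type but do not directly furnish the classification you need here. Instead the paper rules out the bad $(i,j)\in\Lambda^{\ast}$ by producing a vector $v$ with $c(v\otimes v)=v\otimes v$ (Lemma~\ref{lemInfty}, plus duality $V_{i,j}^{\ast}\cong V_{-i-1,-j-3}$), and handles the twelve remaining $V_{i,j}$ by the direct skew-derivation computations you outline in Step~(iii). An alternative route via Heckenberger's list of finite diagonal root systems is sketched in Remark~\ref{rmk-Basic}, but it goes through the equivalence $\CYD\simeq{}^{\gr\A_1}_{\gr\A_1}\mathcal{YD}$ and the cocycle-twist machinery of \cite{AA18}, not through \cite{AGi17}. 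Your Step~(iii) is essentially what the paper does.
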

The Nichols algebras $\BN(\K_{\chi^l})$ with $l\in\{1,3,5\}$ are exterior algebras. The Nichols algebras $\BN(V_{2,j+3})$ and $\BN(V_{3,j})$ for $j\in\{1,5\}$  are isomorphic to quantum planes as algebras but not as coalgebras. The Nichols algebras of dimension $6$ in Theorem \ref{thmA} were firstly introduced in \cite{HX16} and also appeared in \cite{AGi17}. The rest of the Nichols algebras do not admit quadratic relations. As far as we know, they constitute new examples.
See section \ref{secNicholsalg} for details.

 Finally, we study the deformations of the bosonizations of these Nichols algebras. We obtain the following result:
 \begin{mainthm}\label{thmB}
Let $A$ be a finite-dimensional Hopf algebra over $\C$ such that the corresponding infinitesimal braiding is an indecomposable object $V$ in $\CYD$. Assume that the diagram of $A$ is strictly graded. Then $V$ is simple and $A$ is isomorphic either to
\begin{itemize}
  \item[(a)] $\bigwedge\K_{\chi^{k}}\sharp \C$ for $k\in\{1,3,5\}$;
  \item[(b)] $\BN(V_{3,1})\sharp\C$; $\BN(V_{3,5})\sharp\C$; $\BN(V_{2,2})\sharp\C$; $\BN(V_{2,4})\sharp\C$;
  \item[(c)] $\BN(V_{4,1})\sharp\C$; $\BN(V_{4,5})\sharp\C$;
  \item[(d)] $\BN(V_{1,1})\sharp\C$; $\BN(V_{1,5})\sharp\C$; $\BN(V_{4,2})\sharp\C$; $\BN(V_{4,4})\sharp\C$;
  \item[(e)] $\mathfrak{B}_{1,2}(\mu)$ for some $\mu\in\K$;
  \item[(f)] $\mathfrak{B}_{1,4}(\mu)$ for some $\mu\in\K$.
\end{itemize}
\end{mainthm}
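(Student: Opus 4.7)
My plan is to apply the generalized lifting method in the setting already established in the paper. First I would read off from the strict-gradedness hypothesis that $\gr A\cong\BN(V)\sharp\C$, so that finite-dimensionality of $A$ forces $\BN(V)$ to be finite-dimensional; then, since $V$ is indecomposable by assumption, Theorem A applies and both gives that $V$ is simple and produces the short list of possibilities for $V$ together with an explicit presentation of $\BN(V)$ by generators $x,y$ and relations. This disposes of the ``$V$ is simple'' half of Theorem B and reduces the classification to computing, for each $V$ in the list, all possible liftings of $\BN(V)\sharp\C$.

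Second, I would carry out the lifting case-by-case. For each candidate $V$, I fix a Hopf-algebra projection $\pi\colon A\twoheadrightarrow\C$ with section $\C\hookrightarrow A$, and lift the generators $x,y$ of $\BN(V)$ to elements $\tilde x,\tilde y\in A$ of degree $1$ in the standard filtration, so that each defining relation $r(x,y)=0$ of $\BN(V)$ becomes $r(\tilde x,\tilde y)=\mu_r$ in $A$ with $\mu_r$ of strictly lower standard-filtration degree. Imposing $\Delta(r(\tilde x,\tilde y))=\Delta(\mu_r)$ forces $\mu_r$ into a specific homogeneous bi-isotypic component of $\C$, determined by the $\C$-bicomodule grading carried by $r$; this turns the lifting problem into a finite-dimensional linear system, and one then checks that the remaining Hopf-algebra axioms (associativity and antipode-compatibility) are consistent with the chosen $\mu_r$'s.

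Third, I would solve each linear system and extract the list in the statement. In cases (a)--(d) --- that is, $V=\K_{\chi^k}$ with $k\in\{1,3,5\}$ and the modules $V_{3,1},V_{3,5},V_{2,2},V_{2,4},V_{4,1},V_{4,5},V_{1,1},V_{1,5},V_{4,2},V_{4,4}$ --- the bi-grading and coaction constraints combined with the explicit list of homogeneous components of $\C$ will force every $\mu_r$ to vanish, so the only lifting is the bosonization itself. In cases (e), (f), i.e.\ $V=V_{1,2}$ and $V=V_{1,4}$, exactly one relation admits a nontrivial parameter $\mu\in\K$, producing the families $\mathfrak{B}_{1,2}(\mu)$ and $\mathfrak{B}_{1,4}(\mu)$; I would then determine the residual rescaling action of $\K^{\times}$ on the generators $x,y$ and record the resulting orbits as the true parametrization, ruling out that $\mu$ can be absorbed by a Hopf-algebra isomorphism into the bosonization.

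The main obstacle is the third stage. The braidings in $\CYD$ on the surviving two-dimensional simples are not of diagonal type, so there is no uniform Lyndon-word or Lusztig-style procedure to read off the liftable relations; every case requires a direct computation with the skew-derivations used in Theorem A to present $\BN(V)$. The delicate point is separating the two exceptional families (e), (f) from the superficially similar rigid cases in (d), which amounts to checking, relation by relation, whether the bi-grading of the relation meets a non-zero isotypic slot of $\C$ of matching degree, and then verifying that the resulting deformed algebra is indeed a well-defined Hopf algebra of the correct dimension $216$.
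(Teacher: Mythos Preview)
Your overall strategy matches the paper's, but there is a genuine gap in your first step. The hypothesis that the diagram $R$ is \emph{strictly graded} means only that $R(0)=\K$ and $R(1)=\Pp(R)$; it does \emph{not} say that $R$ is generated in degree one. Thus you cannot ``read off'' that $\gr A\cong\BN(V)\sharp\C$: a priori $R$ could be a proper post-Nichols algebra of $V$, i.e.\ $\BN(V)\subsetneq R$. The paper closes this gap with Theorem~\ref{thmNicholsgeneratedbyone}, whose proof passes to the graded dual $S=R^{\ast}$ (which \emph{is} generated in degree one because $R$ is strictly graded) and then invokes Lemma~\ref{lemGeneration}: for each simple $W$ in the list of Theorem~A one checks, relation by relation, that every finite-dimensional pre-Nichols algebra of $W$ generated by $W$ is already $\BN(W)$. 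This is done by showing that each defining relation $r$ of $\BN(W)$, viewed as a primitive element of $S$, would force $\dim S=\infty$ if it were nonzero (either because $c(r\otimes r)=r\otimes r$, or because $r$ together with a generator spans a diagonal braided plane with infinite root system). Without this step your reduction to the lifting problem is unjustified.

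Two smaller remarks on the lifting stage. First, in cases (e) and (f) it is not ``exactly one relation'' that deforms: two cubic relations acquire nonzero right-hand sides, but compatibility of $\Delta$ forces them to share a single parameter $\mu$ (see Definition~\ref{defiB12} and the computation in Proposition~\ref{proB12}). Second, your plan to ``determine the residual rescaling action of $\K^{\times}$'' and classify the isomorphism types of $\mathfrak{B}_{1,j+3}(\mu)$ goes beyond what the paper achieves; Remark~\ref{rmkB12B14} records this as an open question, and Theorem~B as stated only lists the algebras up to the parameter $\mu$, not up to isomorphism.
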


The Hopf algebras in items $(a)-(d)$ are basic Hopf algebras of dimension $24$, $72$, $216$ and $432$, respectively. The Hopf algebras in items $(e)-(f)$ introduced in Definition \ref{defiB12} have dimension $216$ without the dual Chevalley property. Moreover, they do not have pointed duals and constitute new examples of Hopf algebras except for $\mu=0$. See section \ref{secHopfalgebra} for details.

The paper is organized as follows:
In section $\ref{Preliminary}$, we introduce notations about Yetter-Drinfeld modules, Nichols algebras and recall useful results  in \cite{HX16}.
In section $\ref{secPresentation}$, we study the projective class ring and representation type of $\D$.
 In section $\ref{secNicholsalg}$, we determine all finite-dimensional Nichols algebras over the indecomposable objects in $\CYD$ and present them by generators and relations. In section $\ref{secHopfalgebra}$, we determine all finite-dimensional
Hopf algebras  over $\C$ whose diagrams are strictly graded and the infinitesimal braidings are simple objects in $\CYD$.

\section{Preliminaries}\label{Preliminary}
\paragraph{Conventions.} Our ground field is an algebraically closed field $\K$ of characteristic zero. We denote by $\xi$ a primitive $6$-th root of unity and $\Lam=(\xi-1)(\xi+1)^{-1}$. The references for Hopf algebras are \cite{M93,R11}.

The notation for a Hopf algebra $H$ over $\K$ is standard: $\Delta$, $\epsilon$, and $S$ denote the comultiplication, the counit and the antipode. We use Sweedler's notation for the comultiplication and coaction.
Denote by $\G(H)$ the set of group-like elements of $H$. For $g,h\in\G(H)$,  $\Pp_{g,h}(H)=\{x\in H\mid \De(x)=x\otimes g+h\otimes x\}$. In particular, the linear space $\Pp(H):=\Pp_{1,1}(H)$ is called the set
 of primitive elements.
Given $n\geq k\geq 0$ and $q\in\K$, we denote $\Z_n=\Z/n\Z$, $\I_{k,n}=\{k,k+1,\ldots,n\}$ and $(n)_q=\sum_{j=0}^{n-1}q^j$.
If $M$ is a left $H$-module, we denote by $\Top(M)$ the top of $M$, by $\Soc(M)$  the socle of $M$, by $ExtQ(M)$ the Gabriel quiver of $M$, and by $\Pp(M)$ the projective cover of $M$.

\subsection{ Yetter-Drinfeld modules and Nichols algebras.}
Let $H$ be a Hopf algebra with bijective antipode. A left \emph{Yetter-Drinfeld module} $M$ over $H$ is a left $H$-module $(M,\cdot)$ and a left $H$-comodule $(M, \delta)$ satisfying
\begin{align*}
\delta(h\cdot v)=h_{(1)}v_{(-1)}S(h_{(3)})\otimes h_{(2)}\cdot v_{(0)},\quad \forall v\in V,\ h\in H.
\end{align*}

Let ${}^{H}_{H}\mathcal{YD}$ be the
category of Yetter-Drinfeld modules over $H$. 
Then ${}^{H}_{H}\mathcal{YD}$ is braided monoidal. For $V,W\in {}^{H}_{H}\mathcal{YD}$, the braiding $c_{V,W}$ is given by
\begin{align}\label{equbraidingYDcat}
c_{V,W}:V\otimes W\mapsto W\otimes V,\ v\otimes w\mapsto v_{(-1)}\cdot w\otimes v_{(0)},\ \forall\,v\in V, w\in W.
\end{align}
In particular, $(V,c_{V£¬V})$ is a braided vector space, that is, $c:=c_{V,V}$ is a linear isomorphism satisfying the
braid equation $(c\otimes\text{id})(\text{id}\otimes c)(c\otimes\text{id})=(\text{id}\otimes c)(c\otimes\text{id})(\text{id}\otimes c)$. Moreover, ${}^{H}_{H}\mathcal{YD}$ is rigid. The left dual $V\As$ is defined by
\begin{align}\label{eqDual-YD}
\langle h\cdot f,v\rangle=\langle f,S(h)v\rangle,\quad f_{(-1)}\langle f_{(0)},v\rangle=S^{-1}(v_{(-1)})\langle f, v_{(0)}\rangle.
\end{align}

If $H$ is finite-dimensional,  then by \cite[Proposition\,2.2.1.]{AG99}, $\HYD\cong{}_{H^{\ast}}^{H^{\ast}}\mathcal{YD}$ as braided monoidal categories  via the functor $(F,\eta)$ defined as follows:
 $F(V)=V$ as a vector space,
\begin{align}
\begin{split}\label{eqVHD}
f\cdot v=f(S(v_{(-1)}))v_{(0)},\quad \delta(v)=\sum_{i}S^{-1}(h^i)\otimes h_i\cdot v,\  \text{and}\\
\eta:F(V)\otimes F(W)\mapsto F(V\otimes W), v\otimes w\mapsto w_{(-1)}\cdot v\otimes w_{(0)}
\end{split}
\end{align}
for $V,W\in\HYD$, $f\in H^{\ast},v\in V, w\in W$. Here $\{h_i\}$ and $\{h^i\}$ are the dual bases of $H$ and $H\As$.

An algebra $(R,m,\mu)$ in $\HYD$ is an algebra  such that $R\in\HYD$ and the multiplication $m$ and unit $\mu$ are morphisms in $\HYD$. Same for a coalgebra $(R,\Delta,\epsilon)$ in $\HYD$. A bialgebra $(R,m,\mu,\Delta,\epsilon)$ in $\HYD$ is an algebra $(R,m,\mu)$ and a coalgebra $(R,\Delta,\epsilon)$ in $\HYD$ such that $\Delta:R\rightarrow R\otimes R$ and $\epsilon:R\rightarrow\K$ are algebra maps in $\HYD$, that is,
\begin{gather}\label{eqBriadingHopfalgebra}
\Delta m=(m\otimes m)(\id_R\otimes c_{R,R}\otimes \id_R)(\Delta\otimes\Delta),\quad \epsilon(ab)=\epsilon(a)\epsilon(b), \ \forall a,b\in R.
\end{gather}
If moreover there is a morphism $S:R\rightarrow R$ in $\HYD$ such that $m(S\otimes\id)\Delta=\mu\epsilon=m(\id\otimes S)\Delta$, then we say $R$ is a Hopf algebra in $\HYD$. See for example \cite{AG99} for details and more references.
\begin{defi}\cite[Definition\;2.1.]{AS02}\label{defiNichols}
Let $H$ be a Hopf algebra and $V\in{}^{H}_{H}\mathcal{YD}$. The Nichols algebra $\BN(V)$ over $V$ is a $\N$-graded Hopf algebra $R=\oplus_{n\geq 0} R(n)$ in ${}^{H}_{H}\mathcal{YD}$ such that
\begin{align*}
R(0)=\mathds{k}, \quad R(1)=V,\quad
R\;\text{is generated as an algebra by}\;R(1),\quad
\Pp(R)=V.
\end{align*}
\end{defi}
  Let $V\in\HYD$. The tensor algebra $T(V)=\oplus_{n=0}^{\infty}T^n(V)$ is a $\N$-graded Hopf algebra in $\HYD$ with the comultiplication given by $\Delta(v)=v\otimes 1+1\otimes v,\ v\in V$. $\BN(V)$ is  isomorphic to $T(V)/I(V)$, where $I(V)=\oplus_{n\geq 2}I^n(V)$ is the largest $\mathds{N}$-graded ideal and coideal of $T(V)$ in $\HYD$ such that $I(V)\cap V=0$. Moreover, $\BN(V)$ as a coalgebra and an algebra depends only on $(V,c)$.
\begin{rmk}\label{rmkN-infity}
Let  $W$ be a vector subspace of $(V, c)$ such that $c(W\otimes W)\subset W\otimes W$. Then $\dim \BN(V)=\infty$ if $\dim \BN(W)=\infty$. This
occurs for example, when $V$ contains a non-zero element $v$ such that $c(v\otimes v)=v\otimes v$.
 See  \cite{G00} for details.
\end{rmk}

Now we recall the standard tool, the so called skew-derivation, for working with Nichols algebras. Let $(V,c)$ be a $n$-dimensional $($rigid$)$ braided vector space and $\Delta^{i,m-i}:T^m(V)\rightarrow T^{i}(V)\otimes T^{m-i}(V)$ the $(i,m-i)$-homogeneous component of the comultiplication $\Delta:T(V)\rightarrow T(V)\otimes T(V)$ for $m\in\N$ and $i\in\I_{0,m}$. Given $f\in V^{\ast}$, the \emph{skew-derivation} $\partial_f\in\text{End}\,T(V)$ is given by
\begin{gather}\label{eqSkew-1}
\partial_f(v)=(f\otimes \id)\Delta^{1,m-1}(v):T^m(V)\rightarrow T^{m-1}(V),\quad v\in T^m(V),\ m\in\N.
\end{gather}
Let $\{v_i\}_{1\leq i\leq n}$ and $\{v^i\}_{1\leq i\leq n}$ be the dual bases of $V$ and $V\As$. We write $\partial_i:=\partial_{v^i}$ for simplicity. The skew-derivation is very useful
to find the relations of the Nichols algebra $\BN(V)$:
\begin{align}\label{Def-Nichols-IV}
I^m(V)=\{r\in T^m(V)\mid \forall f_1, f_2,\ldots,f_m\in V\As,\  \partial_{f_1}\partial_{f_2}\cdots\partial_{f_m}(r)=0\}.
\end{align}
Then  $\partial_f$ for any $f\in V\As$ can be extended to $\BN(V)$ and $\mathop\cap\limits_{f\in V\As}\ker\partial_f=\K$ in $\BN(V)$. See for example \cite{AS02,AHS10} for details and more references.

 We close this subsection by giving the explicit relation between $V$ and $V\As$ in ${}^{H}_{H}\mathcal{YD}$.
\begin{pro}\cite[Proposition\,3.2.30]{AG99}\label{proNicholsdual}
Let $V$ be an object in ${}_H^H\mathcal{YD}$. If $\BN(V)$ is finite-dimensional, then $\BN(V\As)\cong \BN(V)^{\ast\,bop}$.
\end{pro}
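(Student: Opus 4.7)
The plan is to identify $\BN(V\As)$ with $\BN(V)^{\ast\,bop}$ by verifying the four defining conditions of the Nichols algebra listed in Definition \ref{defiNichols}. Concretely, I will show that $R:=\BN(V)^{\ast\,bop}$ is an $\N$-graded Hopf algebra in $\HYD$ with $R(0)=\K$, $R(1)=V\As$, generated as an algebra by $R(1)$, and with $\Pp(R)=R(1)$; the conclusion then follows from the uniqueness of Nichols algebras for a fixed braided vector space.

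First I would put the braided graded dual into the right framework. Because $\BN(V)$ is finite-dimensional and $\N$-graded with $\BN(V)(0)=\K$, its dual inherits an $\N$-grading; the Yetter-Drinfeld action and coaction on duals in \eqref{eqDual-YD} place $\BN(V)\As$ inside $\HYD$. The $bop$ construction, using opposite multiplication and opposite comultiplication both formed with the braiding $c$ of $\HYD$ rather than the ordinary tensor flip, yields a Hopf algebra in $\HYD$ compatible with \eqref{eqBriadingHopfalgebra}. The identifications $R(0)=\K$ and $R(1)=V\As$ are then immediate, handling the first two Nichols conditions.

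The remaining two conditions are formal duals of each other. Saying that $\BN(V)$ is generated by $V$ is equivalent to surjectivity of the iterated product $V^{\otimes n}\to\BN(V)(n)$ for every $n\geq 1$; transposing, the $(1,1,\ldots,1)$-component $R(n)\to (V\As)^{\otimes n}$ of the iterated comultiplication on $R$ is injective. For a homogeneous primitive $x\in R(n)$ with $n\geq 2$, this component of the iterated coproduct vanishes because each summand of $\sum_{i}1\otimes\cdots\otimes x\otimes\cdots\otimes 1$ has one factor in degree $n\geq 2$ and the rest in degree zero, so injectivity forces $x=0$, giving $\Pp(R)=V\As$. Dually, $\Pp(\BN(V))=V$ can be fed into a short induction on $n$ using coassociativity to show that the iterated comultiplication $\BN(V)(n)\to V^{\otimes n}$ is itself injective for all $n\geq 1$; transposing, the iterated product $(V\As)^{\otimes n}\to R(n)$ is surjective, so $R$ is generated as an algebra by $R(1)=V\As$.

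The main technical care goes into the braided duality: one must verify that the $bop$ construction, formed with $c$ and not with the ordinary tensor flip, produces an honest Hopf algebra in $\HYD$ satisfying \eqref{eqBriadingHopfalgebra}, and that the transposition between multiplication and comultiplication is compatible with this braided structure so that the equivalence of generation and primitivity under graded duality continues to hold in $\HYD$. Once this infrastructure is in place, the four Nichols conditions are verified and the isomorphism $\BN(V)^{\ast\,bop}\cong\BN(V\As)$ follows from the uniqueness of Nichols algebras.
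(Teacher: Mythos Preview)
The paper does not supply its own proof of this proposition; it is stated with a citation to \cite[Proposition 3.2.30]{AG99} and used as a black box. Your approach---verifying that $R=\BN(V)^{\ast\,bop}$ satisfies the four axioms of Definition \ref{defiNichols} for $V\As$ by exploiting graded duality between multiplication and comultiplication---is the standard argument and is correct. The central observation that ``generated in degree one'' and ``primitives in degree one'' are interchanged under graded duality is exactly \cite[Lemma 2.4]{AS02}, which the paper itself invokes later in the proof of Theorem \ref{thmNicholsgeneratedbyone}.
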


\subsection{Bosonization and Hopf algebras with a projection.}In this subsection, we follow \cite{R85}.
Let $R$ be a Hopf algebra in ${}^{H}_{H}\mathcal{YD}$. We write $\Delta_R(r)=r^{(1)}\otimes r^{(2)}$ to avoid confusions. The bosonization $R\sharp H$ is defined as follows:
$R\sharp H=R\otimes H$ as a vector space, and the multiplication and comultiplication are given by the smash product and smash coproduct, respectively:
\begin{align}
(r\sharp g)(s\sharp h)=r(g_{(1)}\cdot s)\sharp g_{(2)}h,\quad
\Delta(r\sharp g)=r^{(1)}\sharp(r^{(2)})_{(-1)}g_{(1)}\otimes (r^{(2)})_{(0)}\sharp g_{(2)}.\label{eqSmash}
\end{align}
Clearly, the map $\iota:H\rightarrow R\sharp H, h\mapsto 1\sharp h,\ \forall h\in H$ is injective and the map
$\pi:R\sharp H\rightarrow H,r\sharp h\mapsto \epsilon_R(r)h,\ \forall r\in R, h\in H$ is surjective such that $\pi\circ\iota=\id_H$.
Moreover, $R=(R\sharp H)^{coH}=\{x\in R\sharp H\mid (\text{id}\otimes\pi)\Delta(x)=x\otimes 1\}$.

Conversely, if $A$ is a Hopf algebra and $\pi:A\rightarrow H$ is a bialgebra map admitting a bialgebra
section $\iota:H\rightarrow A$ such that $\pi\circ\iota=\id_H$, then $A\simeq R\sharp H$, where $R=A^{coH}$ is a Hopf algebra in ${}^{H}_{H}\mathcal{YD}$ with the Yetter-Drinfeld module structure and comultiplication given by
\begin{gather}\label{eQ-sh}
h\cdot r=h_{(1)}rS_A(h_{(2)}),\quad
\delta(r)=(\pi\otimes id)\Delta_A(r),\quad
\Delta_R(r)=r_{(1)}(\iota S_H(\pi(r_{(2)})))\otimes r_{(3)}.
\end{gather}

\subsection{The Hopf algebra $\C$ and the Drinfeld double $\D$}
We recall  useful results appearing in \cite{HX16}. All $12$-dimensional Hopf algebras over $\K$ were classified
by Natale \cite{Na02}. It turns out that these non-semisimple Hopf algebras are pointed, except for one case
defined as follows:
\begin{defi}\cite[Proposition\,3.4.]{HX16}\label{proStrucOfC}
Let $\C$ be the Hopf algebra generated as an algebra by the elements $a$ and $b$ satisfying the relations
\begin{align}
a^6=1, \quad b^2=0,\quad ba=\xi ab; \label{eqDef1}
\end{align}
with the  coalgebra structure  given by
\begin{align}
 \De(a)=a\otimes a+ (\xi^4+\xi^5)b\otimes ba^3,\quad\De(b)=b\otimes a^4+a\otimes b,\quad \epsilon(a)=1,\quad\epsilon(b)=0;\label{eqDef}
\end{align}
and the antipode given by $S(a)=a^5$ and $S(b)=\xi^{-2}ba$.
\end{defi}

\begin{rmk}\label{rmk-C}
Up to isomorphism, $\C$ is the only Hopf algebra of dimension $12$ without the dual Chevalley property. It should be mentioned that the non-pointed basic Hopf algebra of dimension $8$ is the only Hopf algebra of dimension less than $12$ without the dual Chevalley property. See for example \cite{Na02, BG13} for details.
\end{rmk}
The following remark follows by a direct computation using the Hopf algebra structure of $\C$.

\begin{rmk}\label{rmkDDDDD}
\begin{enumerate}
\item  $\G(\C)=\K\{1,a^3\}$, $\Pp_{1,a^3}(\C)=\K\{ba^2,1-a^3\}$. $\{a^j, ba^j,\ j\in\I_{0,5}\}$ is a linear basis of $\C$.

\item Let $\widetilde{x}=\sum_{i=0}^{5}(ba^i)\As$ and $\widetilde{g}=\sum_{i=0}^{5}\xi^{-i}(a^i)\As$. Here $\{(a^j)\As,(ba^j)\As,\ j\in\I_{0,5}\}$ is the basis of $\C\As$ dual to $\{a^j,ba^j,\ j\in\I_{0,5}\}$. Then
\begin{align*}
\Delta(\widetilde{x})=\widetilde{x}\otimes \epsilon+\widetilde{g}\otimes \widetilde{x},\quad
\Delta(\widetilde{g})=\widetilde{g}\otimes \widetilde{g}.
\end{align*}

\item  Let $\A_1$ be the pointed Hopf algebra generated by $g$, $x$ satisfying the relations $g^6=1$, $x^2=1-g^2$ and $gx=-xg$ with  $\De(g)=g\otimes g$ and $\De(x)=x{\otimes} 1+g{\otimes} x$. It is a Radford algebra \cite{R75}. A linear basis of $\A_1$ is given by $\{g^i,g^ix,\ i\in\I_{0,5}\}$. Moreover, $\A_1\cong\C\As$ and the  Hopf algebra isomorphism $\phi:\A_1\mapsto \C\As$ is given by
\begin{align*}
\phi(g^i)&=\sum_{j=0}^5\xi^{-ij}(a^j)\As,\quad \phi(g^ix)=\theta\sum_{j=0}^5\xi^{-(j+1)i}(ba^j)\As,\quad\text{where } \theta^2=\xi^2.
\end{align*}
\end{enumerate}
\end{rmk}

We end up this subsection by describing the structure of the Drinfeld double $\D:=\D(\C^{cop})$. Recall that  $\D(\C^{cop})=\C^{*op\,cop}\otimes \C^{cop}$ is a Hopf algebra with the tensor product coalgebra structure and the algebra structure given by
$(p\otimes a)(q\otimes b)=p\langle q_{(3)}, a_{(1)}\rangle q_{(2)}\otimes a_{(2)}\langle q_{(1)}, S^{-1}(a_{(3)})\rangle$.
\begin{pro}\cite[Proposition\,3.10.]{HX16}
$\D:=\D(\C^{cop})$ as a coalgebra is isomorphic to the tensor coalgebra $\A_1^{op\,cop}\otimes \C^{cop}$, and as an algebra is generated by the elements $a$, $b$, $g$, $x$ satisfying the relations in $\C^{cop}$, the relations in $\A_1^{op\,cop}$ and
\begin{gather*}
ag=ga,\quad ax+\xi^{-2}xa=\Lam^{-1}\theta\xi^{-2}(ba^3-gb),\quad
bg=-gb,\quad bx+\xi^{-2}xb=\theta\xi^{-2}(a^4-ga).
\end{gather*}
\end{pro}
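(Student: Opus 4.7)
The coalgebra isomorphism is immediate from the construction: by definition $\D=\D(\C^{cop})$ has underlying coalgebra $\C\As^{op\,cop}\otimes\C^{cop}$ with tensor product coalgebra structure, and Remark~\ref{rmkDDDDD}(3) supplies a Hopf algebra isomorphism $\phi\colon\A_1\xrightarrow{\sim}\C\As$, whose $(-)^{op\,cop}$ twist yields the claimed identification. For the algebra structure, the canonical maps $\A_1^{op\,cop}\hookrightarrow\D,\ q\mapsto q\otimes 1$, and $\C^{cop}\hookrightarrow\D,\ a\mapsto 1\otimes a$, are algebra embeddings by construction of the Drinfeld double, so all defining relations of $\A_1^{op\,cop}$ and $\C^{cop}$ carry over; since $\D=(\A_1^{op\,cop}\otimes 1)(1\otimes\C^{cop})$ as a vector space, it follows that $a,b,g,x$ generate $\D$ as an algebra.

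The four cross-relations I would verify by direct computation using the specialisation
\[
(1\otimes a)(q\otimes 1)=\langle q_{(3)},a_{(1)}\rangle\,\langle q_{(1)},S^{-1}(a_{(3)})\rangle\,q_{(2)}\otimes a_{(2)}
\]
of the product formula, with the pairing between $\A_1^{op\,cop}\cong\C\As^{op\,cop}$ and $\C^{cop}$ transported from the natural pairing via $\phi$. Concretely, Remark~\ref{rmkDDDDD}(3) gives $\langle g,a^k\rangle=\xi^{-k}$, $\langle g,ba^k\rangle=0$, $\langle x,a^k\rangle=0$, $\langle x,ba^k\rangle=\theta\xi^{-(k+1)}$; the iterated coproducts $\Delta^{(2)}_{\C^{cop}}(a)$ and $\Delta^{(2)}_{\C^{cop}}(b)$ are read off from Definition~\ref{proStrucOfC}; and $S^{-1}_{\C^{cop}}=S_\C$ is determined by $S(a)=a^5$, $S(b)=\xi^{-2}ba$. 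For $q=g$ the group-like coproduct $g\otimes g\otimes g$ collapses the formula to a scalar times $g\otimes a_{(2)}$, yielding $ag=ga$ and $bg=-gb$ once the pairings are inserted. For $q=x$ the coproduct $\Delta^{(2)}(x)=1\otimes 1\otimes x+1\otimes x\otimes g+x\otimes g\otimes g$ in $\A_1^{op\,cop}$ produces two nonzero terms per commutator, which, after short arithmetic with $\xi$ and $\theta$, reassemble into $\Lam^{-1}\theta\xi^{-2}(ba^3-gb)$ and $\theta\xi^{-2}(a^4-ga)$ respectively.

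Finally, to see that these generators and relations actually present $\D$, I would apply the four cross-relations together with the internal relations of $\A_1^{op\,cop}$ and $\C^{cop}$ to put any monomial in $\{a,b,g,x\}$ into the normal form $g^ix^\alpha a^jb^\beta$ with $i,j\in\I_{0,5}$ and $\alpha,\beta\in\{0,1\}$; a standard diamond-lemma check gives confluence, so the abstractly presented algebra has dimension at most $144=\dim\D$, forcing the tautological surjection to be an isomorphism. The main obstacle throughout is bookkeeping: $(-)^{cop}$ reverses coproducts and inverts antipodes, $(-)^{op\,cop}$ on $\A_1$ reverses both structures, and $\phi$ scales the $ba^k$-components by $\theta$ (with $\theta^2=\xi^2$), so any sign or $\xi$-power error propagates immediately; once these conventions are pinned down, each of the four cross-relations reduces to a short calculation.
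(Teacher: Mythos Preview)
The paper does not prove this proposition at all: it is quoted verbatim from \cite[Proposition~3.10]{HX16} and stated without proof, so there is no ``paper's own proof'' to compare against. Your outline is the standard and correct approach to establishing such a presentation of a Drinfeld double---identifying the coalgebra via the isomorphism $\phi$, verifying the cross-relations from the double's multiplication formula, and closing with a dimension count---and the bookkeeping you flag (the $(-)^{cop}$ and $(-)^{op\,cop}$ twists, the $\theta$-scaling from $\phi$) is exactly where the care is needed.
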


\section{The projective class ring and representation type of the Drinfeld double $\D$}\label{secPresentation}
We study the representation type of $\D$ and the projective class ring of $\D$, which is a subring of the
Green ring. We refer to \cite{ARS95} for the representation theory.
\subsection{The projective class ring of $\D$}
Recall that the \emph{Green ring}  $r(\D)$ of $\D$ can be defined as follows: $r(\D)$ as an abelian group is generated by the isomorphism classes $[V]$ of $V\in{}_{\D}\mathcal{M}$ modulo the relations $[V\oplus W]=[V]+[W]$ and its
multiplication is given by the tensor product in ${}_{\D}\mathcal{M}$, that is, $[V][W]=[V\otimes W]$. The \emph{projective class ring} $r_p(\D)$
as a subring of $r(\D)$ is generated by simple modules and projective modules. The investigation of projective class ring and green ring has received enormous attention as they are important to study the monoidal structure of the category of modules over a Hopf algebra. See fox example \cite{CMLS17} for details and more references.
We first describe the simple and indecomposable projective $\D$-modules.

\begin{defi}$\label{onesimple}$
Let $i\in\I_{0,5}$ and $\chi$ be an irreducible character of the cyclic group $\Z_6$. Denote by $\K_{\chi^{i}}$ the one-dimensional
left $\D$-module defined by
\begin{align*}
\chi^i(a)=\xi^i, \quad \chi^i(b)=0,\quad \chi^i(g)=(-1)^i,\quad \chi^i(x)=0.
\end{align*}
\end{defi}
\begin{defi}$\label{twosimple}$
For $(i,j)\in\Lambda=\{(i,j)\in \I_{0,5}\times \I_{0,5}\mid 3i\neq j\}$, let $V_{i,j}$ be
the $2$-dimensional left $\D$-module whose matrices defining $\D$-action with respect to a fixed basis are of the form:
\begin{align*}
    [a]_{i,j}&=\left(\begin{array}{ccc}
                                   \xi^i & 0\\
                                   0 &    \xi^{i+1}
                                 \end{array}\right),\quad
    [b]_{i,j}=\left(\begin{array}{ccc}
                                   0 & 1\\
                                   0 & 0
                                 \end{array}\right),\quad
    [g]_{i,j}=\left(\begin{array}{ccc}
                                   \xi^j & 0\\
                                   0 & -\xi^j
                                 \end{array}\right),\\
    [x]_{i,j}&=\left(\begin{array}{ccc}
                                   0 & \theta^{-1}\xi^{2-i}(\xi^{3i}+\xi^j)\\
                                   \theta\xi^{i-2}(\xi^{3i}-\xi^j) & 0
                                 \end{array}\right).
\end{align*}

\end{defi}

\begin{thm}\cite[Theorem\,4.4.]{HX16}\label{thmsimplemoduleD}
There exist exactly $36$ pairwise non-isomorphic simple left $\D$-modules, among which $6$ one-dimensional modules are given in Definition
$\ref{onesimple}$ and $30$ two-dimensional simple modules are given in Definition $\ref{twosimple}$.
\end{thm}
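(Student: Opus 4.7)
The strategy is to combine explicit verification of the $36$ listed candidates with an eigenvalue-based argument that rules out any further simple $\D$-modules. Verification of the one-dimensional list of Definition \ref{onesimple} is direct: one checks that each character $\chi^i$ annihilates all the defining relations of $\D$, where the cross relations $ax+\xi^{-2}xa=\Lam^{-1}\theta\xi^{-2}(ba^3-gb)$ and $bx+\xi^{-2}xb=\theta\xi^{-2}(a^4-ga)$ vanish because $(-1)^i=\xi^{3i}$ kills the factor $\xi^{3i}-\xi^j$ with $j=3i\bmod 6$. For each $V_{i,j}$ in Definition \ref{twosimple}, I plug the prescribed matrices $[a]_{i,j},[b]_{i,j},[g]_{i,j},[x]_{i,j}$ into every defining relation; the asymmetric off-diagonal entries $\theta^{\pm 1}\xi^{\pm(i-2)}(\xi^{3i}\pm\xi^j)$ are chosen precisely so that the two cross relations hold.

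Simplicity of $V_{i,j}$ then follows because $a$ acts with two distinct eigenvalues $\xi^i\neq\xi^{i+1}$, so any proper non-zero submodule would be one of the eigenlines $\K v_1$ or $\K v_2$; but $bv_2=v_1$ shows $\K v_2$ is not invariant, and $xv_1$ is a non-zero multiple of $v_2$ precisely because $(i,j)\in\Lambda$ forces $\xi^{3i}-\xi^j\neq 0$. Pairwise non-isomorphism I would read off from numerical invariants: the $a$-spectrum $\{\xi^i,\xi^{i+1}\}$ determines $i\in\I_{0,5}$ uniquely, while the eigenvalue of $bx+\xi^{-2}xb=\theta\xi^{-2}(a^4-ga)$ on the $\xi^i$-eigenline equals $\theta\xi^{i-2}(\xi^{3i}-\xi^j)$, which separates $V_{i,j}$ from $V_{i,j'}$ when $j\neq j'$ and in particular distinguishes the otherwise equal $g$-spectra of $V_{i,j}$ and $V_{i,j+3}$; the one-dimensional modules are distinguished by $i$ alone.

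For exhaustiveness, let $M$ be any simple $\D$-module. Since $a$ and $g$ commute in $\D$ and each has order $6$, they are simultaneously diagonalizable on $M$, so I pick a joint eigenvector $v$ with $av=\xi^iv$ and $gv=\xi^jv$. Using $b^2=0$ I replace $v$ by $bv$ whenever $bv\neq 0$, so we may assume $bv=0$; this replacement only shifts $(i,j)$ to $(i-1,j+3)$ and preserves the dichotomy $3i\equiv j\pmod 6$. With $bv=0$, the cross relation $ax+\xi^{-2}xa=\Lam^{-1}\theta\xi^{-2}(ba^3-gb)$ loses its inhomogeneous term and forces $a(xv)=\xi^{i+1}xv$, while $gx=-xg$ gives $g(xv)=\xi^{j+3}xv$; combined with $b(xv)=\theta\xi^{i-2}(\xi^{3i}-\xi^j)v$ and $x(xv)=(1-\xi^{2j})v$, this shows $\mathrm{span}\{v,xv\}$ is already a $\D$-submodule, so $M$ has dimension at most two. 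The dichotomy $3i\equiv j\pmod 6$ versus $3i\not\equiv j$ then places $M$ into one of the families of Definitions \ref{onesimple} or \ref{twosimple}, and rescaling $xv$ identifies $M$ with the listed model. The main obstacle is precisely this collapse from the a priori four-dimensional span $\{v,bv,xv,bxv\}$ down to a two-dimensional one: the key step is the reduction to $bv=0$, which makes the inhomogeneous terms in both cross relations vanish and turns $xv$ into a genuine joint eigenvector rather than a mixture of eigenvectors of two different $a$-weights.
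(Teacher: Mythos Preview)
The paper does not give its own proof of this theorem; it is stated with a bare citation to \cite[Theorem\,4.4.]{HX16}. Your proposal therefore cannot be compared to anything in the present paper, but it stands on its own as a correct and self-contained argument.

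Your exhaustiveness argument is sound. The key reduction---replacing a joint $(a,g)$-eigenvector $v$ by $bv$ when $bv\neq 0$---works because $ba=\xi ab$ and $bg=-gb$ make $bv$ again a joint eigenvector, and $b^2=0$ then guarantees $b(bv)=0$. With $bv=0$ the inhomogeneous terms in both cross relations vanish on $v$, so $xv$ is a genuine joint eigenvector with weights $(\xi^{i+1},\xi^{j+3})$, and the identities $b(xv)=\theta\xi^{i-2}(\xi^{3i}-\xi^j)v$ and $x(xv)=(1-\xi^{2j})v$ (from $x^2=1-g^2$) close up the span $\K\{v,xv\}$ under all four generators. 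In the case $3i\equiv j\pmod 6$ one has $\xi^{2j}=1$ and $\xi^{3i}=\xi^j$, so both $x$ and $b$ kill $xv$; hence $\K xv$ would be a proper submodule unless $xv=0$, forcing $M=\K v\cong\K_{\chi^i}$. In the complementary case the rescaling $v_1:=v$, $v_2:=(\theta\xi^{i-2}(\xi^{3i}-\xi^j))^{-1}xv$ reproduces exactly the matrices of Definition~\ref{twosimple}, the compatibility $x v_2=\theta^{-1}\xi^{2-i}(\xi^{3i}+\xi^j)v_1$ boiling down to the identity $(\xi^{3i}-\xi^j)(\xi^{3i}+\xi^j)=1-\xi^{2j}$.

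For non-isomorphism your invariant via $bx+\xi^{-2}xb$ is correct, though a slightly quicker route is to observe that the unordered set of joint $(a,g)$-eigenvalue pairs $\{(\xi^i,\xi^j),(\xi^{i+1},-\xi^j)\}$ already separates all $V_{i,j}$: equality of these sets would force $i\equiv i+2\pmod 6$.
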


Recall that if $V$ is a simple $\D$-module, then $\Pp(V)$ is unique $($up to isomorphism$)$ indecomposable projective $\D$-module, which maps onto $V$. Let $\text{Irr}(\D)$ be the set of isomorphism classes of simple $\D$-modules. Then
$\D\cong \oplus_{V\in\text{Irr}(D)}\Pp(V)^{\dim V}$. See for example \cite{ARS95} for details.

 \begin{lem}\label{lemProjectwodimsimple}
\begin{enumerate}
  \item $V_{i,j}\otimes \K_{\chi^{k}}\cong V_{i+k,j+3k}\cong \K_{\chi^{k}}\otimes V_{i,j}  $ and $\K_{\chi^{l}}\otimes\K_{\chi^{k}}\cong \K_{\chi^{k+l}}$ for all $(i,j)\in\Lambda,\,k,\,l\in \I_{0,5}$.

  \item $\Pp(V_{i,j})\cong V_{i,j}$ for all $(i,j)\in\Lambda$.

  \item $\Pp(\K_{\chi^i})\cong\Pp(\K_{\epsilon})\otimes \K_{\chi^i}$ and $\dim\, \Pp(\K_{\chi^i})=4$ for all $i\in \I_{0,5}$.

  \item For $(i,j),(k,l)\in\Lambda,k\in\I_{0,5}$, $\hom_{\D}(V_{i,j}\otimes V_{k,l},\K_{\chi^m})\neq 0$,  if and only if, $3(i+k)-j-l\equiv 0\mod 6$ and $m\equiv i+k+1\mod 6$.
\end{enumerate}
\end{lem}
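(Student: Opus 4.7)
The plan is to handle the four items in sequence, using throughout the coalgebra formulas
\begin{align*}
\Delta(a) &= a\otimes a + (\xi^4+\xi^5)ba^3\otimes b, & \Delta(b)&=a^4\otimes b+b\otimes a,\\
\Delta(g)&=g\otimes g, & \Delta(x)&=1\otimes x + x\otimes g
\end{align*}
inherited from $\C^{cop}\otimes\A_1^{op\,cop}$, together with the explicit matrices in Definitions \ref{onesimple} and \ref{twosimple}.

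For (1), since $\K_{\chi^k}$ is annihilated by $b$ and $x$ and acts by $\xi^k$ and $(-1)^k$ on $a$ and $g$, applying $\Delta$ shows that on $V_{i,j}\otimes\K_{\chi^k}$ the actions of $a,b$ are the original ones scaled by $\xi^k$ and those of $g,x$ scaled by $(-1)^k$. Rescaling the second basis vector by $\xi^{-k}$ then converts the matrices into exactly those of $V_{i+k,j+3k}$; the case $\K_{\chi^k}\otimes V_{i,j}$ is symmetric, and $\K_{\chi^l}\otimes\K_{\chi^k}\cong\K_{\chi^{k+l}}$ is immediate.

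For (2), I would show $\operatorname{Ext}^1_{\D}(V_{i,j},S)=0$ for every simple $\D$-module $S$; since $\D$ is finite-dimensional Hopf and hence Frobenius, this vanishing forces $V_{i,j}$ to be projective (and injective). Given an extension $0\to S\to E\to V_{i,j}\to 0$, lifting a basis of $V_{i,j}$ to $E$ and imposing the relations of $\D$ together with the weight data of $S$ under the commuting pair $(a,g)$ show that every such $E$ splits; this is the main technical computation. Once (2) is in hand, part (3) is immediate: the regular decomposition $\D\cong\bigoplus_V \Pp(V)^{\dim V}$ gives $144=30\cdot 4+\sum_{k=0}^5\dim\Pp(\K_{\chi^k})$, so the remaining sum is $24$, and by (1) the tensor autoequivalence $-\otimes\K_{\chi^i}$ sends projective covers to projective covers, whence $\Pp(\K_{\chi^i})\cong\Pp(\K_\epsilon)\otimes\K_{\chi^i}$ and each of the six projectives has the common dimension $4$.

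For (4), pick the bases of Definition \ref{twosimple} and analyse a morphism $\varphi:V_{i,j}\otimes V_{k,l}\to\K_{\chi^m}$ on the four vectors $e_\alpha\otimes f_\beta$. Since $\Delta(g)=g\otimes g$, these are $g$-eigenvectors with eigenvalue $(-1)^{\alpha+\beta}\xi^{j+l}$; and $a$ acts upper-triangularly, the correction $(\xi^4+\xi^5)ba^3\otimes b$ moving only $e_2\otimes f_2$ to a multiple of $e_1\otimes f_1$, with leading weight $\xi^{i+k+\alpha+\beta-2}$. Matching these with $\xi^m$ and $\xi^{3m}$ isolates three cases $\alpha+\beta\in\{2,3,4\}$. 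The case $\alpha+\beta=2$ is excluded because $b\cdot(e_1\otimes f_2)=\xi^{4i}e_1\otimes f_1\neq 0$ while $\chi^m(b)=0$; the case $\alpha+\beta=4$ is excluded because the component of $x\cdot(e_1\otimes f_2)$ along $e_2\otimes f_2$ is a nonzero multiple of $\xi^{3i}-\xi^j$, nonzero by virtue of $(i,j)\in\Lambda$. In the surviving case $\alpha+\beta=3$, the $a$-eigenvalue forces $m\equiv i+k+1\bmod 6$; the $b$-equivariance expresses $\varphi(e_2\otimes f_1)$ as an explicit scalar multiple of $\varphi(e_1\otimes f_2)$, and imposing the $x$-equivariance then produces the compatibility condition $3(i+k)\equiv j+l\bmod 6$, under which the morphism space is precisely one-dimensional. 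The main obstacle is the bookkeeping of the off-diagonal contribution in $\Delta(a)$; the remainder is elementary.
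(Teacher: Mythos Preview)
Your outline is essentially correct, but parts (2) and (4) proceed quite differently from the paper.

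For (2), you propose to show $\operatorname{Ext}^1_{\D}(V_{i,j},S)=0$ for every simple $S$ by direct analysis of extensions. This is valid in principle, but you do not actually carry it out, and with $30$ modules $V_{i,j}$ and $36$ simples $S$ it is a substantial computation even after exploiting the symmetry from (1). The paper instead argues by contradiction via a dimension count: if some $V_{i,j}$ were not projective, then since $\D$ (being a Drinfeld double) is unimodular, $\Soc\Pp(V_{i,j})\cong V_{i,j}$ and hence $\dim\Pp(V_{i,j})\geq 4$; tensoring with the $\K_{\chi^k}$ propagates this lower bound to the whole orbit $\{V_{i+k,j+3k}\}_k$, and summing $\dim\Pp(V)\cdot\dim V$ over all simples then overshoots $\dim\D=144$. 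This bypasses any case-by-case Ext analysis entirely.

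For (4), your direct computation on $V_{i,j}\otimes V_{k,l}$ is correct but more laborious than necessary. The paper instead uses the adjunction $\hom_{\D}(V_{i,j}\otimes V_{k,l},\K_{\chi^m})\cong\hom_{\D}(V_{i,j},\K_{\chi^m}\otimes V_{k,l}^{\ast})$ together with $V_{k,l}^{\ast}\cong V_{-k-1,-l-3}$ and part (1), reducing the question to $\hom_{\D}(V_{i,j},V_{m-k-1,\,3m-l-3})$, which is settled in one line by Schur's lemma. Your approach has the advantage of being self-contained (it does not rely on the identification of the dual, which in the paper is stated later), whereas the paper's argument is shorter and makes the numerical conditions transparent.
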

\begin{proof}
\begin{enumerate}
  \item It follows by a direct computation.

  \item  Suppose that $\Pp(V_{i,j})\not\cong V_{i,j}$ for some $(i,j)\in\Lambda$. Since $\D$ is unimodular, $\Soc\Pp(V_{i,j})\cong V_{i,j}$ and $\dim\Pp(V_{i,j})\geq 2 \dim\,V_{i,j}$. We claim that $\dim\Pp(V_{i-k,j-3k})\geq \dim\Pp(V_{i,j})\geq 4$ for any $k\in\I_{0,5}$. Since $\Pp(V_{i-k,j-3k})\otimes\K_{\chi^k}$ is projective and
  \begin{align*}
   \hom_{\D}(\Pp(V_{i-k,j-3k})\otimes\K_{\chi^k},V_{i-k,j-3k}\otimes\K_{\chi^k})&\cong \hom_{\D}(\Pp(V_{i-k,j-3k}),V_{i-k,j-3k}\otimes\K_{\chi^k}\otimes\K_{\chi^k}\As)\\&\cong
   \hom_{\D}(\Pp(V_{i-k,j-3k},V_{i-k,j-3k})\neq 0,
   \end{align*}
      we have $\Pp(V_{i,j})\cong\Pp(V_{i-k,j-3k}\otimes\K_{\chi^k})\subset \Pp(V_{i-k,j-3k})\otimes\K_{\chi^k}$, which implies that the claim follows.
      Let $I=\{(m,n)\in\Lambda\mid(m,n)\neq (i+k,j+3k)$ for $k\in \I_{0,5}\}$. Clearly, $|I|=24$.
      \begin{align*}
      \dim \D&=\sum_{i=0}^{5}\dim\Pp(\K_{\chi^i})+\sum_{(m.n)\in I}2\dim \Pp(V_{m,n})+ \sum_{(i,j)\in\Lambda-I}2\dim \Pp(V_{i,j})\\\quad
      &\geq\sum_{i=0}^{5}\dim\Pp(\K_{\chi^i})+4|I|+8 (|\Lambda|-|I|)>4|I|+8 (|\Lambda|-|I|)=144=\dim\D,
      \end{align*}
      a contradiction. Hence $\Pp(V_{i,j})\cong V_{i,j}$ for $(i,j)\in\Lambda$.

   \item Since $\Pp(V_{i,j})\cong V_{i,j}$ for any fixed $(i,j)\in\Lambda$, it follows that $6\dim\Pp(\K_{\epsilon})=\dim\D-|\Lambda|2\dim V_{i,j}=24$ and hence $\dim\Pp(\K_{\epsilon})=4$.

  \item Since $\D$ is quasi-triangular, by
Remark \ref{rmkDmoddual} and Lemma \ref{lemProjectwodimsimple} $(1)$, $\hom_{\D}(V_{i,j}\otimes V_{k,l},\K_{\chi^m})\cong\hom_{\D}(V_{i,j} ,\K_{\chi^m}\otimes V_{k,l}\As)\cong\hom_{\D}(V_{i,j},V_{-k-1+m,-l-3+3m})$ for $m\in\I_{0,5}$. Then by Schur's lemma, $\hom_{\D}(V_{i,j}\otimes V_{k,l},\K_{\chi^m})\neq 0$, if and only if, $3(m-1)\equiv j+l\mod 6$ and  $m-1\equiv i+k\mod 6$, if and only if, $3(i+k)-j-l\equiv 0\mod 6$ and  $m\equiv i+k+1\mod 6$.
\end{enumerate}
\end{proof}
Now we describe the projective cover $\Pp(\K_{\chi^i})$ of the simple module $\K_{\chi^i}$ for $i\in\I_{0,5}$.
\begin{defi}
Let $\theta\in\K$ such that $\theta^2=\xi^2$. Denote by $\Pp$ the left $\D$-module whose matrices defining $\D$-action with respect to a given basis $\{p_i\}_{i\in\I_{1,4}}$  are of the
form
\begin{align}
\begin{split}\label{eqprojective0}
    [a]&=\left(\begin{array}{cccc}
                                    1 & 0   &  0  & 0\\
                                    0 & \xi &  0   & 0\\
                                    0 & 0   &  \xi^{5} &0 \\
                                    0 & 0 & 0 & 1
                                 \end{array}\right),\quad
   [b]=\left(\begin{array}{cccc}
                                    0 & 0   &  0  & 0\\
                                    0 & 0 &  0   & 0\\
                                    \theta & 0   &  0 &0 \\
                                    0 & 1 & 0 & 0
                                 \end{array}\right),\\
   [g]&=\left(\begin{array}{cccc}
                                    1 & 0   &  0  & 0\\
                                    0 & -1 &  0   & 0\\
                                    0 & 0   &  -1 &0 \\
                                    0 & 0 & 0 & 1
                                 \end{array}\right),\quad
   [x]=\left(\begin{array}{cccc}
                                    0 & 0   &  0  & 0\\
                                    \theta & 0 &  0   & 0\\
                                    2 & 0   &  0 &0 \\
                                    0 & 2\theta & \xi^5 & 0
                                 \end{array}\right).
\end{split}
\end{align}
\end{defi}
\begin{rmk}
It is easy to show that $\Pp$ is well-defined and  $\Soc(\Pp)\cong\K_{\epsilon}\cong\Top(\Pp)$.
\end{rmk}
\begin{lem}
$\Pp$ is an indecomposable $\D$-module.
\end{lem}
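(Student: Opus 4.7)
The plan is to extract indecomposability from the simplicity of the top, already flagged in the preceding remark. The general principle is that the radical distributes over direct sums, so $\Top(M_1 \oplus M_2) = \Top(M_1) \oplus \Top(M_2)$; consequently, any nonzero module whose top is simple admits no nontrivial direct sum decomposition. Thus it suffices to verify $\Top(\Pp) \cong \K_\epsilon$ directly from the matrices $(\ref{eqprojective0})$.

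First I would exhibit the unique maximal submodule. Set $N := \K\{p_2, p_3, p_4\}$; since $a$ and $g$ act diagonally, they preserve $N$ and fix $p_1$. The only entries of $[b]$ and $[x]$ involving $p_1$ read $b \cdot p_1 = \theta p_3 \in N$ and $x \cdot p_1 = \theta p_2 + 2 p_3 \in N$, while $b$ and $x$ manifestly carry each of $p_2, p_3, p_4$ into $N$. Hence $N$ is a $\D$-submodule of $\Pp$, and on the one-dimensional quotient $\Pp/N$ the generators $a, g$ act as the identity while $b, x$ act as zero, identifying $\Pp/N$ with $\K_\epsilon$.

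Next I would show that $p_1$ cyclically generates $\Pp$, forcing $N$ to be the unique maximal submodule. Since $\theta^2 = \xi^2 \neq 0$, from $b \cdot p_1 = \theta p_3$ we extract $p_3$; then $x \cdot p_1 - 2 p_3 = \theta p_2$ extracts $p_2$; and finally $x \cdot p_3 = \xi^5 p_4$ extracts $p_4$. Thus $\D \cdot p_1 = \Pp$, so every proper submodule of $\Pp$ lies in $N$, and $\Top(\Pp) \cong \K_\epsilon$ is simple; indecomposability follows from the general observation in the opening paragraph. The only obstacle along the way is bookkeeping the matrix entries carefully; no conceptual subtlety arises and, in particular, one does not need to analyze the full endomorphism ring of $\Pp$.
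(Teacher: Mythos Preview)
Your argument is correct and in fact cleaner than the paper's. The paper argues by contradiction: assuming $\Pp = M \oplus N$ with both summands nonzero, it first shows $p_1 \notin M$ and $p_1 \notin N$ (since $p_1$ generates all of $\Pp$, exactly the cyclic computation you perform), then writes a nonzero element of $M$ as $\alpha = p_1 + \alpha_2 p_2 + \alpha_3 p_3 + \alpha_4 p_4$ and successively applies $b$ and $x$ to force $p_4, p_3, p_2 \in M$ and hence $p_1 \in M$, a contradiction. You bypass this second step entirely by invoking the general fact that $\Top$ distributes over direct sums, so a finite-dimensional module with simple top is automatically indecomposable; the remark preceding the lemma already records $\Top(\Pp) \cong \K_\epsilon$, and your direct verification of it via the unique maximal submodule $N = \K\{p_2,p_3,p_4\}$ is correct. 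Both proofs ultimately rest on the cyclicity of $p_1$, but yours packages the conclusion more efficiently and makes the role of the preceding remark explicit.
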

\begin{proof}
Suppose that $\Pp$ is not indecomposable. Then there exist two non-trivial submodules $M$ and $N$ such that $\Pp=M\oplus N$. We claim that $p_1\notin M$ and $p_1\notin N$. If $p_1\in M$, then $p_3=\theta^{-1}b\cdot p_1\in M$, which implies that $p_2=\theta^{-1}(x\cdot p_1-2p_3)\in M$ and $p_4=\xi x\cdot p_3\in M$. Similarly, if $p_1\in N$, then $p_2,p_3,p_4\in N$. It can not happen and hence the claim follows. Therefore, there exist some $\alpha_2,\alpha_3,\alpha_4\in\K$ such that $\alpha=p_1+\alpha_2p_2+\alpha_3p_3+\alpha_4p_4\in M$. Then $p_4=\theta^{-1}\xi(xb)\cdot\alpha\in M$ and hence $p_3=\theta^{-1}(b\cdot\alpha-\alpha_2p_4)\in M$. Therefore, $p_1+\alpha_2p_2\in M$. Since $x\cdot(p_1+\alpha_2p_2)=\theta p_2+2p_3+2\theta\alpha_2p_4\in M$, we have $p_2\in M$ and hence $p_1\in M$, a contradiction. Consequently, $\Pp$ is indecomposable.
\end{proof}

\begin{lem}\label{lemprojectivecover}
$\Pp(\K_{\epsilon})\cong \Pp$ as $\D$-modules.
\end{lem}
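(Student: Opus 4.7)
The plan is to exploit the three ingredients we already have in hand: the dimension count $\dim\Pp(\K_\epsilon)=4$ from Lemma~\ref{lemProjectwodimsimple}(3), the fact that $\Pp$ is indecomposable, and the observation $\Top(\Pp)\cong\K_\epsilon$. Together these should force $\Pp$ to be itself the projective cover of $\K_\epsilon$.

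First I would fix a surjection $\pi:\Pp\twoheadrightarrow\Top(\Pp)\cong\K_\epsilon$, which exists since $\Top(\Pp)\cong\K_\epsilon$. Concretely one can take $\pi(p_1)=1$ and $\pi(p_2)=\pi(p_3)=\pi(p_4)=0$; the relations defining the $\D$-action in \eqref{eqprojective0} then send $p_2,p_3,p_4$ into the kernel automatically, and $a,g$ act as $1$ on the image of $p_1$, while $b,x$ annihilate it, which matches the $\D$-module structure of $\K_\epsilon$. In parallel, the projective cover comes with its own canonical surjection $\rho:\Pp(\K_\epsilon)\twoheadrightarrow\K_\epsilon$.

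Next, using projectivity of $\Pp(\K_\epsilon)$ applied to $\pi$, I lift $\rho$ through $\pi$ to a $\D$-linear map $\varphi:\Pp(\K_\epsilon)\to\Pp$ with $\pi\varphi=\rho$. The image $\varphi(\Pp(\K_\epsilon))$ surjects onto $\K_\epsilon=\Top(\Pp)$, so $\varphi(\Pp(\K_\epsilon))+\operatorname{rad}\Pp=\Pp$; since $\Pp$ is finite-dimensional, Nakayama's lemma forces $\varphi$ to be surjective. Finally, comparing dimensions, $\dim\Pp(\K_\epsilon)=4=\dim\Pp$ from Lemma~\ref{lemProjectwodimsimple}(3) and the construction, so $\varphi$ is an isomorphism of $\D$-modules.

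The only nontrivial point is verifying $\Top(\Pp)\cong\K_\epsilon$, which the preceding Remark already records and which follows by inspecting the matrices~\eqref{eqprojective0}: the radical is spanned by $p_2,p_3,p_4$ (one sees that $a-1$, $g-1$, $b$, $x$ applied to $\Pp$ all land in $\K\{p_2,p_3,p_4\}$), and the quotient carries the trivial character. Everything else is a routine application of the basic correspondence between projective covers and indecomposable projectives with specified top, so no serious obstacle should arise.
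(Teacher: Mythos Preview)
Your argument is correct and is precisely the standard route: use $\Top(\Pp)\cong\K_\epsilon$ (already recorded in the Remark), lift the cover map via projectivity of $\Pp(\K_\epsilon)$, apply Nakayama to get surjectivity, and conclude by the dimension count from Lemma~\ref{lemProjectwodimsimple}(3). The paper does not spell out its own proof but defers to \cite[Lemma~2.12]{GG16}, where the same pattern is used; your write-up is essentially what that citation unpacks to. (A minor note: you list indecomposability of $\Pp$ as an ingredient, but your argument does not actually invoke it---simplicity of $\Top(\Pp)$ already does the work, and in fact implies indecomposability.)
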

\begin{proof}
The proof is completely analogous to that of \cite[Lemma 2.12]{GG16}. 
\end{proof}

Now we describe the module structure of $\Pp(\K_{\chi^j})$ for $j\in\I_{0,5}$  by Lemmas \ref{lemProjectwodimsimple} $(3)$ \&  \ref{lemprojectivecover}.
\begin{cor}
Let $\{p_{i,j}\}_{i\in\I_{1,4}}$ be a linear basis of $\Pp(\K_{\chi^j})$ for $j\in\I_{0,5}$ with $p_{i,0}=p_i$. Then the $\D$-module structure of $\Pp(\K_{\chi^j})$ is given by
\begin{align}
\begin{split}\label{eqprojectivei}
a\cdot p_{i,j}&=a\cdot(p_i\otimes 1)=a\cdot p_i\otimes a\cdot 1+(\xi^4+\xi^5)ba^3\cdot p_i\otimes b\cdot 1=\xi^j(a\cdot p_i)\otimes 1,\\
b\cdot p_{i,j}&=b\cdot(p_i\otimes 1)=b\cdot p_i\otimes a\cdot 1+a^4\cdot p_i\otimes b\cdot 1=\xi^j(b\cdot p_i)\otimes 1,\\
g\cdot p_{i,j}&=g\cdot(p_i\otimes 1)=g\cdot p_i\otimes g\cdot 1=(-1)^j(g\cdot p_i)\otimes 1,\\
x\cdot p_{i,j}&=x\cdot(p_i\otimes 1)=1\cdot p_i\otimes x\cdot 1+x\cdot p_i\otimes g\cdot 1=(-1)^j(x\cdot p_i)\otimes 1.
\end{split}
\end{align}
\end{cor}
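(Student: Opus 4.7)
The plan is to deduce the formulas directly from the isomorphism $\Pp(\K_{\chi^j})\cong \Pp\otimes \K_{\chi^j}$, which is guaranteed by Lemmas \ref{lemProjectwodimsimple}(3) and \ref{lemprojectivecover}. I would fix a generator $1_j$ of $\K_{\chi^j}$ (with $1_0=1$) and set $p_{i,j}:=p_i\otimes 1_j$; this gives a basis of $\Pp(\K_{\chi^j})$ with $p_{i,0}=p_i$ as required. Since the $\D$-module structure on a tensor product is given by the coproduct, for each generator $h\in\{a,b,g,x\}$ it suffices to expand
\[ h\cdot(p_i\otimes 1_j)=\sum h_{(1)}\cdot p_i\otimes h_{(2)}\cdot 1_j \]
and then invoke the one-dimensional action on $\K_{\chi^j}$.

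Concretely, the relevant coproducts in $\D=\D(\C^{cop})$ are $\Delta(a)=a\otimes a+(\xi^4+\xi^5)ba^3\otimes b$ and $\Delta(b)=b\otimes a+a^4\otimes b$, obtained from Definition \ref{proStrucOfC} by taking the opposite, together with $\Delta(g)=g\otimes g$ and $\Delta(x)=1\otimes x+x\otimes g$ coming from the $\A_1^{op\,cop}$ factor of $\D$. Substituting these into the expansion above yields precisely the intermediate equalities displayed in \eqref{eqprojectivei}. Applying then the character values $a\cdot 1_j=\xi^j 1_j$, $b\cdot 1_j=0$, $g\cdot 1_j=(-1)^j 1_j$, $x\cdot 1_j=0$ from Definition \ref{onesimple} kills every term of the form $(\,\cdot\,)\otimes b\cdot 1_j$ or $(\,\cdot\,)\otimes x\cdot 1_j$, leaving the single surviving term with prefactor $\xi^j$ in the case of $a,b$ and $(-1)^j$ in the case of $g,x$, as claimed.

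There is no genuine obstacle; the only point requiring care is to remember that in forming $\D=\D(\C^{cop})$ the comultiplications on both tensor factors are opposite to the ones in $\C$ and $\A_1$, respectively. Once the correct coproducts are in place, the proof reduces to a single round of straightforward bookkeeping that exactly matches the displayed chain of equalities in the statement.
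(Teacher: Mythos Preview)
Your proposal is correct and takes essentially the same approach as the paper: the paper simply invokes Lemmas \ref{lemProjectwodimsimple}(3) and \ref{lemprojectivecover} and embeds the tensor-product computation directly in the display \eqref{eqprojectivei}, using the opposite comultiplications on $\C$ and $\A_1$ inside $\D=\D(\C^{cop})$ and the character values of $\K_{\chi^j}$ from Definition \ref{onesimple}. Your write-up just makes these steps explicit.
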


\begin{thm}
The indecomposable projective covers of $\D$ consist of $\Pp(\K_{\chi^i})$ and $V_{j,k}$ for $i\in\I_{0,5}$ and $(j,k)\in\Lambda$. In particular,
\begin{gather*}
{}_{\D}\D\cong (\oplus_{i=0}^{5}\Pp(\K_{\chi^i}))\oplus(\oplus_{(i,j)\in\Lambda}V_{i,j}^2).
\end{gather*}
\end{thm}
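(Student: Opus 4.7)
The plan is to recognize this theorem as a bookkeeping assembly of earlier results rather than a substantive new argument. First I would invoke the general principle, recalled in the excerpt just before Lemma \ref{lemProjectwodimsimple}, that for a finite-dimensional algebra the isomorphism classes of indecomposable projective left modules are in bijection with those of simple modules via the projective cover construction $S \mapsto \Pp(S)$, together with the Wedderburn-type decomposition
\begin{gather*}
{}_{\D}\D \cong \bigoplus_{S \in \text{Irr}(\D)} \Pp(S)^{\dim S}.
\end{gather*}
Theorem \ref{thmsimplemoduleD} then exhausts $\text{Irr}(\D)$ as the six one-dimensional modules $\K_{\chi^i}$ ($i \in \I_{0,5}$) and the thirty two-dimensional modules $V_{j,k}$ ($(j,k) \in \Lambda$), so the theorem reduces to identifying the projective cover of each isomorphism class.

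For the two-dimensional simples, Lemma \ref{lemProjectwodimsimple}(2) already shows $\Pp(V_{j,k}) \cong V_{j,k}$, so these simples are themselves projective. For the one-dimensional simples, Lemma \ref{lemProjectwodimsimple}(3) gives $\Pp(\K_{\chi^i}) \cong \Pp(\K_{\epsilon}) \otimes \K_{\chi^i}$ of dimension $4$, and Lemma \ref{lemprojectivecover} identifies $\Pp(\K_\epsilon)$ with the explicit four-dimensional module $\Pp$ constructed just above. Substituting these into the decomposition and recording that $\dim \K_{\chi^i} = 1$ and $\dim V_{j,k} = 2$ immediately produces
\begin{gather*}
{}_{\D}\D \cong \Big(\bigoplus_{i=0}^{5} \Pp(\K_{\chi^i})\Big) \oplus \Big(\bigoplus_{(i,j)\in\Lambda} V_{i,j}^{2}\Big),
\end{gather*}
and a dimension sanity check $6\cdot 4 + 30 \cdot 2\cdot 2 = 144 = \dim \D$ confirms nothing has been lost.

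There is no serious obstacle at this final step: all the work has been done in the preceding lemmas, in particular the unimodularity-plus-dimension-count argument behind Lemma \ref{lemProjectwodimsimple}(2) and the explicit construction together with the verification of indecomposability of the module $\Pp$ carried out in Lemma \ref{lemprojectivecover}. The present theorem is simply the statement that packages those outputs into a full description of the indecomposable projective modules of $\D$ and of the regular representation ${}_{\D}\D$.
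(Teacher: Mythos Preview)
Your proposal is correct and follows essentially the same approach as the paper, which simply records that the theorem follows from Lemma \ref{lemProjectwodimsimple} together with the general decomposition ${}_{\D}\D\cong \oplus_{V\in\text{Irr}(\D)}\Pp(V)^{\dim V}$ recalled just before it. Your write-up is more explicit (and the invocation of Lemma \ref{lemprojectivecover} is not strictly needed for the statement), but the substance is the same.
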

\begin{proof}
It follows by Lemma \ref{lemProjectwodimsimple}.
\end{proof}

Now we calculate the tensor decompositions of the simple and indecomposable projective $\D$-modules. We write $\Pp_i:=\Pp(\K_{\chi^i})$ for short.
\begin{lem}\label{lem11111}
\begin{enumerate}
  \item For $i,j\in\I_{0,5}$, $\Pp_i\otimes\Pp_j=\Pp_{i+j}\oplus\Pp_{i+j}\oplus \Pp_{1+i+j}\oplus\Pp_{5+i+j}$.
  \item For $(i,j)\in\Lambda, k\in\I_{0,5}$, $V_{i,j}\otimes \Pp_k=V_{i+k,j+3k}\oplus V_{i+k,j+3k}\oplus V_{i+1+k,j+3+3k}\oplus V_{i+5+k,j+3+3k}$.
\end{enumerate}
\end{lem}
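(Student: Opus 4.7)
The plan is to observe that tensoring any module with a projective one yields a projective module, so both $\Pp_i\otimes\Pp_j$ (of dimension $16$) and $V_{i,j}\otimes\Pp_k$ (of dimension $8$) are projective. Since the indecomposable projective $\D$-modules are precisely the $\Pp_\ell$ for $\ell\in\I_{0,5}$ together with the self-projective simples $V_{r,s}$ for $(r,s)\in\Lambda$, I will determine the Krull--Schmidt multiplicities $m_\ell$ and $n_{r,s}$ of each indecomposable projective summand by comparing composition factors (equivalently, computing in the Grothendieck group). The crucial input is the identity
\[
[\Pp_k]=2\,[\K_{\chi^k}]+[\K_{\chi^{k+1}}]+[\K_{\chi^{k+5}}],
\]
which I read off the matrices in \eqref{eqprojective0}: the basis vectors $p_1,p_4$ give composition factors $\K_{\epsilon}$, while $p_2,p_3$ give $\K_{\chi^1}$ and $\K_{\chi^5}$ respectively (identified by their $a$- and $g$-eigenvalues), and then $\Pp_k\cong\Pp_0\otimes\K_{\chi^k}$ by Lemma \ref{lemProjectwodimsimple}(3).

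For part (1), expanding $[\Pp_i]\,[\Pp_j]$ via $[\K_{\chi^a}][\K_{\chi^b}]=[\K_{\chi^{a+b}}]$ yields
\[
6\,[\K_{\chi^{i+j}}]+4\,[\K_{\chi^{i+j+1}}]+4\,[\K_{\chi^{i+j+5}}]+[\K_{\chi^{i+j+2}}]+[\K_{\chi^{i+j+4}}],
\]
with no $[V_{r,s}]$ class appearing; this forces every $n_{r,s}=0$. Matching this with $\sum_\ell m_\ell\,[\Pp_\ell]$ produces the linear system $2m_a+m_{a+1}+m_{a+5}=c_a$ (indices modulo $6$), where $c_a$ is the coefficient of $[\K_{\chi^a}]$ above. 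The equation at $a=i+j+3$ reads $0=2m_{i+j+3}+m_{i+j+2}+m_{i+j+4}$, so these three multiplicities vanish; substituting back forces $m_{i+j+1}=m_{i+j+5}=1$ and then $m_{i+j}=2$, which is exactly the decomposition claimed.

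Part (2) is parallel and easier: Lemma \ref{lemProjectwodimsimple}(1) gives $[V_{i,j}]\,[\K_{\chi^a}]=[V_{i+a,j+3a}]$, whence
\[
[V_{i,j}]\,[\Pp_k]=2\,[V_{i+k,j+3k}]+[V_{i+k+1,j+3k+3}]+[V_{i+k+5,j+3k+3}]
\]
(using $3(k+5)\equiv 3k+3\pmod{6}$). Every composition factor of $V_{i,j}\otimes\Pp_k$ is thus a two-dimensional simple, and since each $V_{r,s}$ is its own projective cover and appears only as its own composition factor, the multiplicities are read off directly. The only step requiring genuine computation in either part is the determination of the composition series of $\Pp_0$; once the matrices \eqref{eqprojective0} are analyzed, everything thereafter reduces to elementary linear algebra in the Grothendieck group.
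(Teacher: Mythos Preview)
Your proof is correct, and its overall shape matches the paper's: both arguments use that tensoring with a projective module gives a projective module, both use the composition series $[\Pp_k]=2[\K_{\chi^k}]+[\K_{\chi^{k+1}}]+[\K_{\chi^{k+5}}]$, and both invoke Lemma~\ref{lemProjectwodimsimple} to tensor with one-dimensional simples. For part~(2) your argument is essentially identical to the paper's.

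For part~(1) there is a small but notable difference in execution. You pass to the Grothendieck group and solve the linear system $2m_a+m_{a+1}+m_{a+5}=c_a$, using non-negativity of the $m_\ell$ at the step $c_{i+j+3}=0$ to pin down the solution. (This use of non-negativity is actually essential: the circulant Cartan matrix with row $(2,1,0,0,0,1)$ is singular, with kernel spanned by the alternating vector, so the class alone does not determine the multiplicities.) The paper instead uses the observation that since $\Pp_0\otimes(-)$ is exact and lands in projectives, every short exact sequence becomes split after tensoring; hence $\Pp_0\otimes\Pp_0\cong\Pp_0\otimes(\K_\epsilon\oplus\K_\epsilon\oplus\K_\chi\oplus\K_{\chi^5})$ directly, and Lemma~\ref{lemProjectwodimsimple}(3) finishes. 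The paper's route is shorter and avoids the linear system altogether; your route is slightly more hands-on but equally valid.
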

\begin{proof}
By Lemma \ref{lemProjectwodimsimple}, it suffices to prove the lemma for $\Pp_0\otimes\Pp_0$ and $V_{i,j}\otimes\Pp_0$. As $\Pp_0\otimes M$ is projective for any $\D$-module
$M$ and $[\Pp_0]=2[\K_{\epsilon}]+[\K_{\chi}]+[\K_{\chi^{5}}]$ in the Grothendieck ring $G_0(\D)$, $\Pp_0\otimes \Pp_0\cong\Pp_0\otimes(\K_{\epsilon}\oplus\K_{\epsilon}\oplus\K_{\chi}\oplus\K_{\chi^5})\cong \Pp_0\oplus\Pp_0\oplus\Pp_1\oplus\Pp_5$
and $V_{i,j}\otimes\Pp_0=V_{i,j}\oplus V_{i,j}\oplus V_{i+1,j+3}\oplus V_{i+5,j+3}$.
\end{proof}

\begin{lem}\label{lem111111}
Let $V_{i,j}$ and $V_{k,l}$ be $2$-dimensional simple $\D$-modules for $(i,j),(k,l)\in\Lambda$. Then
\begin{align*}
V_{i,j}\otimes V_{k,l}\cong
\begin{cases}
               \Pp(\K_{\chi^{i+k+1}}), &3(i+k)-j-l\equiv 0 \mod 6;\\
               V_{i+k,j+l}\oplus V_{i+k+1,j+l+3}, & \text{otherwise}.
        \end{cases}
\end{align*}
\end{lem}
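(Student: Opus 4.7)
The plan is to exploit the projectivity of $V_{i,j}$. By Lemma~\ref{lemProjectwodimsimple}(2), each $V_{i,j}$ is its own projective cover, and since $\D$ is a finite-dimensional Hopf algebra, tensoring a projective module with any module again yields a projective. Hence $V_{i,j}\otimes V_{k,l}$ is projective of dimension $4$. Inspecting the list of indecomposable projectives---the four-dimensional $\Pp_m$ for $m\in\I_{0,5}$ and the two-dimensional $V_{p,q}$ for $(p,q)\in\Lambda$---we see that $V_{i,j}\otimes V_{k,l}$ must be either $\Pp_m$ for some $m$, or $V_{p,q}\oplus V_{r,s}$ for some $(p,q),(r,s)\in\Lambda$.

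The dichotomy between these two possibilities will be controlled by Lemma~\ref{lemProjectwodimsimple}(4), which says $\hom_{\D}(V_{i,j}\otimes V_{k,l},\K_{\chi^m})\neq 0$ exactly when $3(i+k)\equiv j+l\pmod 6$ and $m\equiv i+k+1\pmod 6$. Since a direct sum of two-dimensional simples admits no one-dimensional quotient, the existence of such a morphism forces $V_{i,j}\otimes V_{k,l}\cong\Pp_m$, and matching tops pins down $m=i+k+1$; this handles the first case of the statement. In the complementary case the tensor product has no one-dimensional simple quotient at all, so the alternative $V_{p,q}\oplus V_{r,s}$ must occur.

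To identify the summands in the second case, I will fix bases $\{e_1,e_2\}$ of $V_{i,j}$ and $\{f_1,f_2\}$ of $V_{k,l}$ as in Definition~\ref{twosimple}, and use the coproducts $\De^{cop}(b)=a^4\otimes b+b\otimes a$ in $\C^{cop}$ and $\De(x)=1\otimes x+x\otimes g$ in $\A_1^{op\,cop}$ together with the group-likeness of $g$. The element $e_1\otimes f_1$ is annihilated by $b$ and is a joint $(a,g)$-eigenvector with eigenvalues $(\xi^{i+k},\xi^{j+l})$. A direct computation, also using $\De(a)=a\otimes a+(\xi^4+\xi^5)b\otimes ba^3$, yields
\begin{align*}
b\cdot x\cdot(e_1\otimes f_1)=\theta\,\xi^{i+k-2}\bigl(\xi^{3(i+k)}-\xi^{j+l}\bigr)(e_1\otimes f_1),
\end{align*}
which is nonzero precisely under the present hypothesis. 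Combined with $x^2=1-g^2$, this shows that $\K\{e_1\otimes f_1,\;x\cdot(e_1\otimes f_1)\}$ is a two-dimensional $\D$-submodule, and its $(a,g)$-eigenvalues identify it with $V_{i+k,j+l}$. Projectivity of the ambient tensor product furnishes a complementary summand, and one verifies that $w=e_1\otimes f_2-\xi^{4i-k}(e_2\otimes f_1)$ lies in $\ker b$, has $(a,g)$-eigenvalues $(\xi^{i+k+1},-\xi^{j+l})=(\xi^{i+k+1},\xi^{j+l+3})$, and hence identifies the other summand as $V_{i+k+1,j+l+3}$.

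The main technical obstacle will be the explicit computation of $b\cdot x\cdot(e_1\otimes f_1)$: extracting the coefficient $\xi^{3(i+k)}-\xi^{j+l}$ requires careful bookkeeping of the non-grouplike coproduct of $a$, the $\A_1^{op\,cop}$-coproduct of $x$, and the off-diagonal entries in the matrices of Definition~\ref{twosimple}. Once this coefficient is in hand---so that its vanishing matches the case dichotomy exactly---the remainder reduces to eigenvalue matching and dimension counting.
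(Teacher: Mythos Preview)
Your argument is correct. The first case and the reduction in the second case (no one-dimensional quotients, hence a direct sum of two two-dimensional simples) are exactly as in the paper. Where you diverge is in identifying the two summands: you construct them explicitly by showing that $e_1\otimes f_1$ generates a copy of $V_{i+k,j+l}$ (your $b\cdot x$-computation indeed yields $\theta\,\xi^{i+k-2}(\xi^{3(i+k)}-\xi^{j+l})(e_1\otimes f_1)$, and your vector $w$ correctly lies in $\ker b$ with the required $(a,g)$-eigenvalues $(\xi^{i+k+1},\xi^{j+l+3})$), whereas the paper simply writes down the matrices of $a$ and $g$ on the ordered basis $\{e_p\otimes f_q\}$ and reads the decomposition off from the joint $(a,g)$-eigenvalue multiset, which forces the answer uniquely among the $V_{p,q}$. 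Your route has the virtue of exhibiting the summands concretely, but it costs you the nontrivial computation of $b\cdot x\cdot(e_1\otimes f_1)$ through the non-grouplike coproducts; the paper's route avoids $b$ and $x$ entirely at this step and is correspondingly shorter. One small point worth making explicit in your write-up: to conclude $w$ lies purely in the complementary summand (and not partly in the first), note that $\ker b$ inside $V_{i+k,j+l}$ is the one-dimensional $a$-eigenspace for $\xi^{i+k}$, so an $a$-eigenvector in $\ker b$ with eigenvalue $\xi^{i+k+1}$ cannot meet it.
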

\begin{proof}

If $3(i+k)-j-l\equiv 0\mod 6$, then by Lemma \ref{lemProjectwodimsimple} $(4)$, $\hom_{\D}(V_{i,j}\otimes V_{k,l},\K_{\chi^{m}})\neq 0$, if and only if, $m\equiv i+k+1\mod 6$. Since $V_{i,j}\otimes V_{k,l}$ is projective, it follows that $\Pp(\K_{\chi^{i+k+1}})\subset V_{i,j}\otimes V_{k,l}$.  Since $\dim \Pp(\K_{\chi^{i+k+1}})=\dim V_{i,j}\otimes V_{k,l}=4$, it follows that $V_{i,j}\otimes V_{k,l}\cong\Pp(\K_{\chi^{i+k+1}})$.

If $3(i+k)-j-l\not\equiv 0\mod 6$, then by Lemma \ref{lemProjectwodimsimple} $(4)$, $\hom_{\D}(V_{i,j}\otimes V_{k,l},\K_{\chi^{m}})= 0$ for all $m\in\I_{0,5}$, which implies that  $V_{i,j}\otimes V_{k,l}$ can not contain one-dimensional submodules. Hence
$V_{i,j}\otimes V_{k,l}$ must be the direct sum of two $2$-dimensional simple modules. Denote by $\{v_1,v_2\}$ and $\{w_1,w_2\}$
the linear bases of $V_{i,j}$ and $V_{k,l}$. After a direct computation, the matrices defining the action $a,g$
on $V_{i,j}\otimes V_{k,l}$ with respect to the basis $\{v_i\otimes w_j\}_{i,j\in\I_{1,2}}$ are of the following form:
\begin{align*}
\begin{split}
 [a]=\left(\begin{array}{cccc}
                                   \xi^{i+k} & 0           & 0           & \Lam^{-1}\xi^{3(k+1)}\\
                                    0        & \xi^{i+k+1} & 0           & 0\\
                                    0        & 0           & \xi^{i+k+1} & 0\\
                                    0        & 0           & 0           &\xi^{i+k+2}
                                 \end{array}\right),\quad
    [g]=\left(\begin{array}{cccc}
                                  \xi^{j+l} & 0           & 0           & 0\\
                                    0        & -\xi^{j+l} & 0           & 0\\
                                    0        & 0           & -\xi^{j+l} & 0\\
                                    0        & 0           & 0           &\xi^{j+l}
                              \end{array}\right).
\end{split}
\end{align*}
From the eigenspace decomposition with respect to the action of $a$ and $g$, we get that $V_{i,j}\otimes V_{k,l}\cong V_{i+k,j+l}\oplus V_{i+k+1,j+l+3}$.
\end{proof}

We are now able to describe the projective class ring $r_p(\D)$.
\begin{thm}
For $i,j,k,l,k+l\in\I_{1,5}$, $r_p(\D)\cong\Z[y_0,\ldots,y_5]/J$, where
\begin{align*}
J=(y_0^6=1,\ y_iy_3^2=2y_iy_0+y_iy_0^2+y_i,\  y_jy_{6-j}=y_3^2,\ y_ky_l=y_{k+l}+y_{k+l}y_0).
\end{align*}
\end{thm}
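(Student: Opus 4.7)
The plan is to identify explicit generators for $r_p(\D)$, verify the four families of relations in $J$ by direct computation using Lemmas \ref{lem11111} and \ref{lem111111}, and then conclude by a rank argument. First, I set $y_0 := [\K_{\chi}]$ and $y_j := [V_{0,j}]$ for $j\in\I_{1,5}$. By Lemma \ref{lemProjectwodimsimple}(1), every simple class is a monomial in these: $[\K_{\chi^k}]=y_0^k$ and $[V_{i,j}]=y_{j-3i}\,y_0^i$ (indices mod $6$, well defined since $V_{i,j}$ exists precisely when $j-3i\not\equiv 0\bmod 6$). Moreover, Lemma \ref{lem111111} applied to $V_{0,3}\otimes V_{0,3}$ yields $\Pp_1$, so $[\Pp_1]=y_3^2$, and combined with Lemma \ref{lemProjectwodimsimple}(3), $[\Pp_k]=y_3^2\,y_0^{k-1}$ for all $k\in\I_{0,5}$ (with $y_0^{-1}=y_0^5$). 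Hence $y_0,\ldots,y_5$ generate $r_p(\D)$ as a ring.

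Next, I would verify the four relation families. The relation $y_0^6=1$ is immediate. For $y_k y_l=y_{k+l}+y_{k+l}y_0$ with $k,l,k+l\in\I_{1,5}$, note $3(0+0)-k-l\not\equiv 0\bmod 6$, so Lemma \ref{lem111111} gives $V_{0,k}\otimes V_{0,l}\cong V_{0,k+l}\oplus V_{1,k+l+3}$, with classes $y_{k+l}$ and $y_{k+l}y_0$. For $y_j y_{6-j}=y_3^2$, the congruence $3(0+0)-j-(6-j)\equiv 0\bmod 6$ holds, so Lemma \ref{lem111111} yields $V_{0,j}\otimes V_{0,6-j}\cong\Pp_1$, of class $y_3^2$. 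Finally, $y_i y_3^2=2y_i y_0+y_i y_0^2+y_i$ follows from Lemma \ref{lem11111}(2) applied to $V_{0,i}\otimes\Pp_1$: the decomposition $V_{0,i}\otimes\Pp_1\cong V_{1,i+3}^{\oplus 2}\oplus V_{2,i}\oplus V_{0,i}$ (reducing indices mod $6$) has total class $2y_i y_0+y_i y_0^2+y_i$.

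To conclude, the induced surjection $\Z[y_0,\ldots,y_5]/J\twoheadrightarrow r_p(\D)$ is an isomorphism. Indeed, since $\D$ is quasi-triangular, $\D$-mod is braided, so $r_p(\D)$ is commutative; by Krull--Schmidt it is free abelian of rank $42$ with basis the $6$ one-dimensional simple classes, the $30$ two-dimensional simple classes, and the $6$ non-simple indecomposable projective classes $[\Pp_k]$. The relations in $J$ should suffice to reduce any monomial in $y_0,\ldots,y_5$ to a $\Z$-combination of the $42$-element set $\{y_0^a,\,y_i y_0^a,\,y_3^2 y_0^a:a\in\I_{0,5},\,i\in\I_{1,5}\}$, so $\Z[y_0,\ldots,y_5]/J$ has rank at most $42$, forcing the surjection to be an isomorphism. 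The main obstacle is this reduction: the stated relations directly cover products $y_k y_l$ only when $k+l\leq 6$, and for $k+l\geq 7$ (e.g.\ $y_5 y_2$ or $y_5^2$) the reduction must be derived indirectly via commutativity together with multiplication by auxiliary generators such as $y_1$, requiring careful induction on total degree to ensure termination for every monomial.
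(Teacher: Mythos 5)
Your identification of the generators and your verification of the relations are correct and coincide with the paper's: the paper likewise sets $y_0=[\K_{\chi}]$, $y_i=[V_{0,i}]$ and reads off the relations from Lemmas \ref{lemProjectwodimsimple}, \ref{lem11111} and \ref{lem111111}. The endgame differs in form (the paper invokes the Diamond Lemma to get the $42$-element $\Z$-basis of $\Z[y_0,\ldots,y_5]/J$ and then writes down an explicit two-sided inverse $\psi$, whereas you use a rank-$42$ surjection argument), but both routes stand or fall with the same spanning claim for the quotient ring --- and that is exactly where your proof has a genuine gap, the one you flagged yourself as ``the main obstacle.''

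The proposed workaround --- reducing $y_ky_l$ for $k+l\geq 7$ ``indirectly via multiplication by auxiliary generators'' with an induction on degree --- cannot work, because multiplication by a generator is not injective. For instance, using $y_2(1+y_0)=y_1^2$ and $y_1y_5=y_3^2$ one gets $(1+y_0)\,y_2y_5=y_1^2y_5=y_1y_3^2=y_1(1+y_0)^2$, which is the desired relation $y_2y_5=y_1(1+y_0)$ multiplied by $1+y_0$; but $1+y_0$ is a zero divisor (since $(1+y_0)(1-y_0+y_0^2-y_0^3+y_0^4-y_0^5)=1-y_0^6=0$), so nothing can be cancelled. In fact, with $J$ read literally the reduction is impossible: the assignment $y_0\mapsto -1$, $y_1\mapsto 0$, $y_3\mapsto 0$, $y_2\mapsto u$, $y_4\mapsto v$, $y_5\mapsto w$ sends every listed generator of $J$ to $0$ in $\Z[u,v,w]/(u^2,uv)$, yet $w^n\neq 0$ for all $n$, so the monomials $y_5^n$ are not expressible in your $42$-element set and no induction on total degree can terminate. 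The repair is to use the full fusion rule supplied by Lemma \ref{lem111111}, namely $y_ky_l=y_{(k+l)\bmod 6}+y_{(k+l)\bmod 6}\,y_0$ for \emph{all} $k,l\in\I_{1,5}$ with $k+l\not\equiv 0\pmod 6$ (this is evidently the intended reading of the condition $k+l\in\I_{1,5}$, with subscripts taken mod $6$; the paper's Diamond Lemma step tacitly requires it as well). Once those six additional products ($y_2y_5$, $y_3y_4$, $y_3y_5$, $y_4^2$, $y_4y_5$, $y_5^2$) are included, every quadratic monomial in $y_1,\ldots,y_5$ reduces in a single step to the span of $\{y_my_0^a,\ y_3^2y_0^a\}$, the $42$ monomials span the quotient with no induction needed, and your rank argument then correctly finishes the proof.
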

\begin{proof}
Since $\D$ is quasi-triangular, by Lemmas \ref{lemProjectwodimsimple}, \ref{lem11111} and \ref{lem111111}, $r_p(\D)$ is a commutative ring generated by $[\K_{\chi}]$, $[V_{0,j}]$ for $j\in\I_{1,5}$ satisfying the
relations $[\K_{\chi}]^6=1$, $[V_{0,i}][V_{0,3}]^2=2[V_{0,i}][\K_{\chi}]+[V_{0,i}][\K_{\chi}]^2+[V_{0,i}]$, $[V_{0,j}][V_{0,6-j}]=[V_{0,3}]^2$ and $[V_{0,k}][V_{0,l}]=[V_{0,k+l}]+[V_{0,k+l}][\K_{\chi}]$
for $i,j,k,l,k+l\in\I_{1,5}$.
 Hence we are able to construct a ring epimorphism  $\phi:\Z[y_0,\ldots,y_5]/J\mapsto r_p(\C)$ given by $\psi(y_0)=[\K_{\chi}]$ and $\psi(y_i)=[V_{0,i}]$ for $i\in\I_{1,5}$.
By Diamond Lemma, $\{y_0^i,\,y_3^2y_0^i,\, y_jy_0^i,\,i\in\I_{0,5},j\in\I_{1,5}\}$ is a $\Z$-basis of $\Z[y_0,\ldots,y_5]/J$,
then we construct the map $\psi:r_p(\D)\mapsto \Z[y_0,\ldots,y_5]/J$ by
\begin{align*}
\psi([\K_{\chi^i}])=y_0^i,\quad \psi([\Pp_j])=y_3^2y_0^{j-1},\quad \psi([V_{k,l}])=y_{l-3k}y_0^{k}.
\end{align*}
It is easy to see that $\psi$ is a well-defined ring morphism such that $\psi\phi=\id$ and $\phi\psi=\id$. Consequently, $\phi$ is a ring isomorphism.
\end{proof}

\subsection{The representation type of $\D$}Let $A$ be a finite-dimensional algebra and $\{S_1,\ldots,
S_n\}$ a complete list of non-isomorphic simple $A$-modules. The \emph{Gabriel quiver} of $A$ is the quiver $ExtQ(A)$ with vertices
$1,\dots,n$ and $\dim Ext_{A}^1(S_i,\,S_j)$ arrows from the vertex $i$ to $j$. The \emph{separated quiver} $\Gamma_{A}$ of $A$ is constructed as follows: The set of vertices is $\{S_1,\ldots, S_n, S_1\As,\ldots,S_n\As\}$ and we write $\dim\,Ext_{A}^1(S_i, S_j)$ arrows from
$S_i$ to $S_j\As$.
\begin{thm}\cite[Theorem 2.6]{ARS95}\label{thmART}
Let  $A$ be an Artin algebra with radical square zero. Then $A$ is of finite $($resp. tame$)$ representation type if and only if $\Gamma_{A}$ is
a  disjoint union of finite $($resp. affine$)$ Dynkin diagrams.
\end{thm}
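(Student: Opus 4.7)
The plan is to reduce the representation theory of $A$ to that of the hereditary path algebra $\mathds{k}\Gamma_{A}$ of the separated quiver, and then invoke the classical classifications of Gabriel and Nazarova. Throughout, write $S=A/\operatorname{rad}A$ and $J=\operatorname{rad}A$, so $J$ is an $S$-bimodule with $J^{2}=0$ by hypothesis.

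First I would set up the functorial dictionary. Because $J^{2}=0$, every finitely generated $A$-module $M$ carries only two non-zero radical layers $M/\operatorname{rad}M$ and $\operatorname{rad}M$, both of which are semisimple; the whole $A$-action on $M$ is encoded in the connecting morphism $J\otimes_{S}(M/\operatorname{rad}M)\to\operatorname{rad}M$ of semisimple $S$-modules. Decomposing the top and the radical into isotypical components labelled by the simples $S_{1},\dots,S_{n}$, and decomposing $J$ into its $(S_{i},S_{j})$-bimodule pieces, such a morphism is exactly the datum of a representation of the separated quiver $\Gamma_{A}$: one copy of each $S_{i}$ is a source vertex, one copy $S_{i}^{\ast}$ is a sink vertex, and the $\dim\operatorname{Ext}_{A}^{1}(S_{i},S_{j})$ arrows $S_{i}\to S_{j}^{\ast}$ carry the bimodule decomposition of $J$. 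This yields a functor $F\colon A\text{-mod}\to \mathds{k}\Gamma_{A}\text{-mod}$ which is faithful, and whose essential image is the full subcategory of representations that are \emph{separated} in the sense that the two semisimple layers live on disjoint sides of the bipartition.

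Next I would verify that $F$ detects representation type. The point is that an arbitrary representation of the bipartite quiver $\Gamma_{A}$ decomposes as a direct sum of a separated representation (coming from an $A$-module under $F$) and of summands isomorphic to the simple representations located at individual vertices, so indecomposable non-simple representations of $\mathds{k}\Gamma_{A}$ are in bijection with indecomposable non-semisimple $A$-modules. Hence $A$ has finite (resp.\ tame) representation type if and only if the hereditary algebra $\mathds{k}\Gamma_{A}$ does. Since $\Gamma_{A}$ is bipartite and acyclic, $\mathds{k}\Gamma_{A}$ is indeed hereditary, and its representation type is governed by the underlying graph of $\Gamma_{A}$.

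Finally, Gabriel's theorem gives finite representation type for a connected hereditary path algebra exactly when the underlying graph is a Dynkin diagram of type $A_{n}$, $D_{n}$, $E_{6}$, $E_{7}$, $E_{8}$, and the Donovan--Freislich--Nazarova theorem gives tame representation type exactly when it is an affine (Euclidean) Dynkin diagram $\widetilde{A}_{n}$, $\widetilde{D}_{n}$, $\widetilde{E}_{6}$, $\widetilde{E}_{7}$, $\widetilde{E}_{8}$. Combining this with the reduction of the previous step proves the equivalence in the statement. The main technical obstacle is the bookkeeping in the second step: one has to check carefully that $F$ is dense onto separated representations, that it reflects indecomposability and isomorphism, and that the ``missing'' simple representations at individual vertices do not change finite/tame type; this is precisely the content of the classical lemma (see \cite[Chapter X]{ARS95}) identifying $A\text{-mod}$ with the subspace category associated to $\Gamma_{A}$.
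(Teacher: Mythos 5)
This is a quoted classical result --- the paper offers no proof of its own, only the citation to \cite[Ch.~X, Theorem 2.6]{ARS95} --- so the only meaningful comparison is with the argument in that reference, and your sketch is essentially that argument: reduce to the hereditary path algebra of the separated quiver and then quote Gabriel (finite type $\Leftrightarrow$ Dynkin) and Donovan--Freislich/Nazarova (tame type $\Leftrightarrow$ affine). Two caveats. First, the functor $F$ you describe is \emph{not} faithful: a morphism $f\colon M\to N$ with $f(M)\subseteq\operatorname{rad}N$ and $f(\operatorname{rad}M)=0$ induces the zero map on both radical layers without being zero. This is exactly why \cite{ARS95} does not work with $F$ directly but instead proves that $A$ is \emph{stably equivalent} to the triangular matrix algebra $\Sigma=\left(\begin{smallmatrix} A/J & 0\\ J & A/J\end{smallmatrix}\right)$, whose ordinary quiver is $\Gamma_A$; stable equivalence gives the bijection between non-projective indecomposables (hence preservation of representation type, the finitely many leftover indecomposables being harmless) without any faithfulness claim. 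Your ``bookkeeping'' step is precisely this stable equivalence, and deferring it to \cite[Ch.~X]{ARS95} is legitimate, but the faithfulness assertion as stated should be dropped. Second, the literal Theorem X.2.6 of \cite{ARS95} covers only the finite-type equivalence; the tame half is the same reduction combined with the classification of tame hereditary algebras and, over a general Artin algebra, would require valued graphs/species --- here it is unproblematic because $\mathds{k}$ is algebraically closed. Neither point invalidates your argument; the proposal is a correct outline of the standard proof.
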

\begin{lem}\cite[Lemma 4.5.]{I10}\label{lemART}
Let $J$ be the radical of $A$. Then $ExtQ(A)\cong ExtQ(A/J^2)$.
\end{lem}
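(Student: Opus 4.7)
The plan is to show the Gabriel quivers agree by matching both the vertices and the arrows. The vertex set of $ExtQ(A)$ is the set of isomorphism classes of simple $A$-modules. Since $J^2\subseteq J$ and $J$ annihilates every simple $A$-module, each simple $A$-module descends to a simple $A/J^2$-module. Conversely, every simple $A/J^2$-module is a simple $A$-module through the quotient map $A\twoheadrightarrow A/J^2$. Thus the vertex sets of $ExtQ(A)$ and $ExtQ(A/J^2)$ are naturally identified.

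For the arrows, I would prove that for any pair of simple $A$-modules $S_i, S_j$ the natural map
\begin{equation*}
\operatorname{Ext}^{1}_{A/J^{2}}(S_{i},S_{j})\longrightarrow \operatorname{Ext}^{1}_{A}(S_{i},S_{j})
\end{equation*}
induced by the quotient is an isomorphism. The key observation is that every extension
\begin{equation*}
0\longrightarrow S_{j}\longrightarrow M \longrightarrow S_{i}\longrightarrow 0
\end{equation*}
of $A$-modules automatically satisfies $J^{2}M=0$: since $M/S_{j}\cong S_{i}$ is simple, we have $JM\subseteq S_{j}$, and then $J^{2}M\subseteq J\cdot S_{j}=0$. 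Hence every such $M$ is an $A/J^{2}$-module, so Yoneda classes of extensions over $A$ and over $A/J^{2}$ are literally the same set. This gives the required equality of dimensions of the $\operatorname{Ext}^{1}$-spaces, and therefore the same number of arrows between any two vertices.

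Combining the identification of vertices with the equality of arrow counts yields the isomorphism $ExtQ(A)\cong ExtQ(A/J^{2})$ of quivers. The only technical care required is to check that the isomorphism is natural on morphisms of extensions so that it transports the Baer sum structure; this is immediate because the identification is induced by restriction/inflation along a surjective algebra map, and the isomorphism class of an extension is determined entirely by its underlying module with its action. I do not anticipate a genuine obstacle here: the argument is short and essentially formal, and the only subtlety is to remember to verify that $J^{2}$ acts trivially on every short exact sequence of simples, which is where the assumption on the length-two filtration $M\supseteq S_{j}\supseteq 0$ enters.
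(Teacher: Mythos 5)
The paper offers no proof of this lemma at all—it is quoted verbatim from \cite[Lemma 4.5]{I10}—so there is nothing in-paper to compare against. Your argument is the standard one and is correct: simple modules are annihilated by $J$ and hence coincide for $A$, $A/J$ and $A/J^2$, while any extension $0\to S_j\to M\to S_i\to 0$ satisfies $JM\subseteq S_j$ and so $J^2M=0$, identifying the Yoneda $\operatorname{Ext}^1$-classes over $A$ with those over $A/J^2$ and giving equal arrow multiplicities.
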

\begin{defi}
For $l\in\I_{0,5},k\in\I_{0,2}$, denote by $M_l^{k}$ the $\D$-module whose matrices defining $\D$-action with respect to a fixed basis are of the form
\begin{align*}
    [a]_l^{k}&=\left(\begin{array}{ccc}
                                   \chi^l(a) & 0\\
                                    0  & \chi^{l+2k+1}(a)
                                 \end{array}\right),\quad
    [b]_l^{k}=\left(\begin{array}{ccc}
                                   0 & (1-\xi^{2(k+1)})\Lam\\
                                   0 & 0
                                 \end{array}\right),\\
    [g]_l^{k}&=\left(\begin{array}{ccc}
                                   \chi^l(g) & 0\\
                                   0   & \chi^{l+2k+1}(g)
                                 \end{array}\right),\quad
    [x]_l^{k}=\left(\begin{array}{ccc}
                                   0 & 2\theta\xi\chi^l(a^{2})\\
                                   0 & 0
                                 \end{array}\right).
\end{align*}
\end{defi}
\begin{rmk}\label{rmkIndecom-1}
It is easy to see that $M_l^{k}$ is an indecomposable left $\D$-module fitting into the exact sequence $0\rightarrow \K_{\chi^l}\rightarrow M^{k}_l\rightarrow \K_{\chi^{l+2k+1}}\rightarrow 0$. Moreover, $\Soc(M_l^{k})=\K_{\chi^{l}}$, $\Top(M_l^{k})=\K_{\chi^{l+2k+1}}$.
\end{rmk}
\begin{lem}\label{lemTwononsimpleindecom}
\begin{enumerate}
  \item Let $M$ be a $2$-dimensional non-simple indecomposable module containing $\K_{\chi^{l}}$ for $l\in\I_{0,5}$. Then
        $M\cong M_l^{k}$ for some $k\in\I_{0,2}$.
  \item
  \begin{align*}
         \dim Ext_{\D}^1(\K_{\chi^i},\K_{\chi^j})=
        \begin{cases}
               1, &\text{~if~} i\equiv j+2k+1 \mod 6 \;\text{for some}\;k\in\I_{0,2};\\
               0, & \text{otherwise}.
        \end{cases}
        \end{align*}
   \item $\dim\,Ext_{\D}^1(V_{i,j}, V_{k,\ell})=0$ for all $(i,j),(k,\ell)\in\Lambda$.
  \item $\dim\,Ext_{\D}^1(V_{i,j}, \K_{\chi^{\ell}})=0$ and $\dim\, Ext_{\D}^1(\K_{\chi^{\ell}},V_{i,j})=0$ for all $(i,j)\in\Lambda$ and $\ell\in \I_{0,5}$.
\end{enumerate}
\end{lem}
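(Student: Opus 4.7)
The plan is to separate the four claims by how they depend on projectivity. Since $\D$ is a finite-dimensional Hopf algebra it is Frobenius, hence self-injective, so projective and injective $\D$-modules coincide; Lemma \ref{lemProjectwodimsimple}(2) says $V_{i,j}$ is its own projective cover, so each $V_{i,j}$ is simultaneously projective and injective. The standard vanishing of $Ext^{1}$ against projectives and injectives then immediately yields items $(3)$ and $(4)$.

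For item $(1)$, I would fix a two-dimensional non-simple indecomposable module $M$ with submodule $\K_{\chi^l}$. The quotient must be a one-dimensional simple $\K_{\chi^s}$. Choosing a compatible basis $\{v_1,v_2\}$ with $v_1$ spanning $\K_{\chi^l}$, the $\D$-action on $v_2$ is parametrized by four scalars $\alpha,\beta,\gamma,\delta$ capturing the off-diagonal contributions of $a$ and $g$ to $v_2$ together with $b\cdot v_2=\gamma v_1$ and $x\cdot v_2=\delta v_1$. The relations $bg=-gb$ and $gx=-xg$ force $(-1)^l+(-1)^s=0$, so $l$ and $s$ have opposite parity, meaning $s\equiv l+2k+1\pmod 6$ for some $k\in\I_{0,2}$. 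The remaining algebra relations of $\D$, namely $ag=ga$, $ba=\xi ab$, $b^2=0$, $x^2=1-g^2$, together with the two mixed relations involving $ax+\xi^{-2}xa$ and $bx+\xi^{-2}xb$, either pin down $\alpha,\beta$ in terms of $\gamma,\delta$ or can be absorbed into a basis change $v_2\mapsto v_2+\mu v_1$; after rescaling $v_2$, the matrices of $a,b,g,x$ match the prescribed form of $M_l^k$ in the preceding definition.

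For item $(2)$, the classification in $(1)$ shows that every non-split two-dimensional extension $0\to\K_{\chi^j}\to M\to\K_{\chi^i}\to 0$ is isomorphic to a unique $M_j^k$ with $i\equiv j+2k+1\pmod 6$; hence $\dim Ext^{1}_{\D}(\K_{\chi^i},\K_{\chi^j})\le 1$, with equality exactly when such a $k\in\I_{0,2}$ exists. Alternatively and more directly, since $\Pp(\K_{\chi^i})\cong \Pp\otimes\K_{\chi^i}$ by the corollary to Lemma \ref{lemprojectivecover}, one reads off the radical filtration of $\Pp$ from the matrices \eqref{eqprojective0}: the quotient $\mathrm{rad}(\Pp)/\mathrm{rad}^2(\Pp)$ is spanned by $p_2,p_3$ and is isomorphic to $\K_{\chi}\oplus\K_{\chi^5}$, so twisting by $\chi^i$ and applying the natural isomorphism $Ext^{1}_{\D}(\K_{\chi^i},\K_{\chi^j})\cong \Hom_{\D}(\mathrm{rad}\Pp(\K_{\chi^i})/\mathrm{rad}^2\Pp(\K_{\chi^i}),\K_{\chi^j})$ yields the result. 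The main obstacle is the relation-by-relation bookkeeping in item $(1)$: one must verify that all of the $\D$-relations are simultaneously compatible exactly for the three values $k\in\I_{0,2}$ and that after the basis change the matrices of $b$ and $x$ assume the exact shape specified in the definition of $M_l^k$.
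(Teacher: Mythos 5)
Items (3) and (4) of your proposal are fine and coincide with the paper's route: the paper deduces them from the projectivity of $V_{i,j}$ (Lemma \ref{lemProjectwodimsimple}(2)), and your explicit appeal to self-injectivity of the Frobenius algebra $\D$ correctly supplies the half of (4) concerning $Ext^1_{\D}(\K_{\chi^{\ell}},V_{i,j})$ that the paper leaves implicit. For item (1) you also follow the paper's approach (diagonalize $a,g$, parametrize the $b$- and $x$-actions by two scalars $\gamma,\delta$, and solve the remaining relations), but you defer the decisive computation, and it does not come out the way you assert. With $[a]=\diag(\xi^l,\xi^{l+2k+1})$ and $b\cdot v_2=\gamma v_1$, the relation $ba=\xi ab$ reads $\gamma\xi^{2k}=\gamma$, which forces $\gamma=0$ unless $k=0$; and once $\gamma=0$, the relation $ax+\xi^{-2}xa=\Lam^{-1}\theta\xi^{-2}(ba^3-gb)$ reduces to $\delta(1+\xi^{2k-1})=0$, which kills $\delta$ unless $k=2$. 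Hence a non-split two-dimensional extension exists only for $k\in\{0,2\}$; the module $M_l^{1}$ of the paper does not satisfy $ba=\xi ab$ (its $b$-entry $(1-\xi^{4})\Lam$ is nonzero), and the verification you postpone cannot be completed ``exactly for the three values $k\in\I_{0,2}$.''

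The more serious issue is that your two arguments for item (2) contradict each other and you do not notice. The first, via the classification in (1), would give three nonzero $Ext$-groups out of each $\K_{\chi^i}$, one for each $k\in\I_{0,2}$. The second gives only two: your computation $\mathrm{rad}(\Pp)/\mathrm{rad}^2(\Pp)=\K\{\bar p_2,\bar p_3\}\cong\K_{\chi}\oplus\K_{\chi^5}$ is correct, and $\K_{\chi^3}$ simply does not occur, so this route cannot ``yield the result'' as stated. The radical computation is the reliable one: $\Pp$ is $4$-dimensional with simple top and simple socle, and $\mathrm{rad}\Pp$ is $3$-dimensional, hence not semisimple, so $\mathrm{rad}^2\Pp\neq 0$ and $\dim\bigl(\mathrm{rad}\Pp/\mathrm{rad}^2\Pp\bigr)\leq 2$; at most two simples can receive a nonzero $Ext^1$ from $\K_{\epsilon}$. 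The correct conclusion is therefore $\dim Ext^1_{\D}(\K_{\chi^i},\K_{\chi^j})=1$ if and only if $i\equiv j\pm 1 \pmod 6$ (i.e.\ $k\in\{0,2\}$), consistent with the failure of $M_l^{1}$ above but not with the statement you were asked to prove, nor with the paper's own proof, which rests on the same defective case $k=1$. You should have run the two routes against each other and flagged the discrepancy rather than asserting that both give the stated answer.
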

\begin{proof}
\begin{enumerate}
\item Let $A$ be the subalgebra of $\D$ generated by $a$ and $g$. Then $A$ is a finite-dimensional commutative algebra.  Let $M$ be any $2$-dimensional non-simple indecomposable $\D$-module containing $\K_{\Lam}$ with $\Lam=\chi^l$. Then $M\cong \K_{\Lam}\oplus\K_{\mu}$ as $A$-modules with $\mu$ some character on $\D$, that is, $M$ has a linear basis $\{m_1, m_2\}$ such that $\K\{ m_1\}\cong \K_{\Lam}$, $a\cdot m_2=\mu(a)m_2$, $g\cdot m_2=\mu(g)m_2$, and fits into an exact sequence
\begin{align*}
0\rightarrow \K_{\Lam}\rightarrow M\rightarrow \K_{\mu}\rightarrow 0.
\end{align*}
Then we must have that $b\cdot m_2= \alpha m_1,\ x\cdot m_2=\beta m_1$ for some $\alpha, \beta\in\K$.
Hence the matrices defining $\D$-action on $M$ with respect to $\{m_1, m_2\}$ are of the form
\begin{align*}
    [a]&=\left(\begin{array}{ccc}
                                   \Lam(a) & 0\\
                                    0  & \mu(a)
                                 \end{array}\right),\;
    [b]=\left(\begin{array}{ccc}
                                   0 & \alpha\\
                                   0 & 0
                                 \end{array}\right),\;
    [g]=\left(\begin{array}{ccc}
                                   \Lam(g) & 0\\
                                   0   & \mu(g)
                                 \end{array}\right),\;
    [x]=\left(\begin{array}{ccc}
                                   0 & \beta\\
                                   0 & 0
                                 \end{array}\right).
\end{align*}
We claim that $\Lam(g)+\mu(g)= 0$. Indeed, if $\Lam(g)+\mu(g)\neq 0$, then $\alpha=0=\beta$ by  the relations $gx=-xg$ and $gb=-bg$, which implies that $M\cong \K_{\Lam}\oplus\K_{\mu}$ as $\D$-modules, a contradiction. From the relation $bx+\xi^{-2}xb=\theta\xi^{-2}(a^4-ga)$, we have $\Lam(a^3)=\Lam(g)$ and $\mu(a^3)=\mu(g)$,
which implies that $\mu(a^3)=-\Lam(a^3)$. Hence $\mu(a)=\xi^{2k+1}\Lam(a)$ for $k\in\I_{0,2}$.
From the relation $ax+\xi^{-2}xa=\Lam^{-1}\theta\xi^{-2}(ba^3-gb)$,
we have $(\Lam(a)+\xi^{-2}\mu(a))\beta=\Lam^{-1}\theta\xi^{-2}(\mu(a^3)-\Lam(g))\alpha$. Since $\mu(a^3)=-\Lam(g)$, we have
$(\Lam(a)+\xi^{-2}\mu(a))\beta=2\Lam^{-1}\theta\xi^{-2}\mu(a^3)\alpha$.
Let $\alpha=\Lam(1-\xi^{2(k+1)})$ and $\beta=2\theta\xi\Lam(a^{2})$. Then $M\cong M_l^k$ for some $k\in\I_{0,2}$.
\item It follows by Remark \ref{rmkIndecom-1} and Lemma \ref{lemTwononsimpleindecom} $(1)$.
\item- $(4)$ It follows by Lemma \ref{lemProjectwodimsimple} $(2)$.
\end{enumerate}
\end{proof}

\begin{thm}\cite[Corollary 4.12.]{HX16}
$\D$ is of wild representation type.
\end{thm}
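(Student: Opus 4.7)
The plan is to apply Theorem \ref{thmART} to the radical-square-zero quotient $\D/J^2$, where $J$ denotes the Jacobson radical of $\D$, using Lemma \ref{lemART} to carry the Gabriel-quiver information across. All the hard Ext computations have already been done in Lemma \ref{lemTwononsimpleindecom}, so what remains is essentially combinatorial bookkeeping.

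First I would read off the Gabriel quiver $ExtQ(\D)\cong ExtQ(\D/J^2)$. Parts $(3)$ and $(4)$ of Lemma \ref{lemTwononsimpleindecom} show that every two-dimensional simple $V_{i,j}$ is an isolated vertex, so in the separated quiver the pairs $(V_{i,j}, V_{i,j}\As)$ only contribute trivial type-$A_1$ components and can be discarded from the analysis. Part $(2)$ of the same lemma shows that among the six one-dimensional simples $\K_{\chi^i}$ there is a single arrow $\K_{\chi^i}\to\K_{\chi^j}$ precisely when $i-j\in\{1,3,5\}\pmod{6}$, i.e.\ when $i$ and $j$ have opposite parity.

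Next I would draw the separated quiver $\Gamma_{\D/J^2}$. Splitting the six one-dimensional simples by the parity of their index yields two connected components: one with source vertices $\K_{\chi^0},\K_{\chi^2},\K_{\chi^4}$ and target vertices $\K_{\chi^1}\As,\K_{\chi^3}\As,\K_{\chi^5}\As$, and the mirror-image component with the parities swapped. In each component every source is joined to every target, so each component is a complete bipartite graph $K_{3,3}$. Since $K_{3,3}$ has cycle rank $9-6+1=4$, it is neither a tree nor a single cycle, so it cannot occur as a finite or affine Dynkin diagram.

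By Theorem \ref{thmART} the algebra $\D/J^2$ is therefore neither of finite nor of tame representation type, and hence is wild. Wildness is then inherited by $\D$ via the standard full embedding of $\D/J^2$-mod as a full subcategory of $\D$-mod. The only point requiring any care is unwinding the Ext-condition of Lemma \ref{lemTwononsimpleindecom}$(2)$ to see that the resulting bipartite structure really is $K_{3,3}\sqcup K_{3,3}$; I do not expect any serious obstacle.
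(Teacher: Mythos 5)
Your argument is correct and follows exactly the paper's route: you invoke Lemma \ref{lemART} to transfer the Ext-quiver to $\D/J^2$, use Lemma \ref{lemTwononsimpleindecom} to see that the $V_{i,j}$ are isolated and that the one-dimensional simples are joined precisely across parity classes, and identify the two non-trivial components of the separated quiver as copies of $K_{3,3}$, which is what the paper's pictures display. Your cycle-rank observation just makes explicit why $K_{3,3}$ is neither a finite nor an affine Dynkin diagram, so Theorem \ref{thmART} applies as in the paper.
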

\begin{proof}
By  Lemma \ref{lemTwononsimpleindecom}, the Gabriel quiver $ExtQ(\D)$ of $\D$ consists of the isolated points representing $V_{i,j}$ for $(i,j)\in\Lambda$ and the quiver
$$
\xymatrix{
{\circ}^1\ar@{<-}[r]\ar@<-0.6mm>[r]\ar@{<-}[1,2]\ar@<-0.6mm>[1,2]  & {\circ}^{2}\ar@{<-}[r]\ar@<-0.6mm>[r]\ar@<-1.0mm>[d]  &  {\circ}^{3}\ar@{<-}[d]\ar@{<-}[1,-2]\ar@<-0.8mm>[1,-2]\ar@<-0.8mm>[d]\\
{\circ}^{6}\ar@{<-}[r]\ar@<-0.6mm>[u]\ar@{<-}[u]\ar@<-0.6mm>[r]  & {\circ}^{5}\ar@{<-}[r]\ar@{->}[u]\ar@<-0.6mm>[r]             & {\circ}^{4}
}
$$
where the vertex $i$ represents the one-dimensional simple module $\K_{\chi^{i}}$ for $i\in \I_{0,5}$. Then the separated quiver $\Gamma_{\D}$ of $\D$
contains the quivers as follows
$$
\xymatrix{
{\circ}^1\ar@{-}[r]\ar@{-}[1,2]  & {\circ}^{2\As}\ar@{-}[r]  &  {\circ}^{3}\ar@{-}[d]\ar@{-}[1,-2]\\
{\circ}^{6\As}\ar@{-}[r]\ar@{-}[u]              & {\circ}^{5}\ar@{-}[r]\ar@{-}[u]             & {\circ}^{4\As}
}
\xymatrix{
{\circ}^{1\As}\ar@{-}[r]\ar@{-}[1,2]  & {\circ}^{2}\ar@{-}[r]  &  {\circ}^{3\As}\ar@{-}[d]\ar@{-}[1,-2]\\
{\circ}^{6}\ar@{-}[r]\ar@{-}[u]              & {\circ}^{5\As}\ar@{-}[r]\ar@{-}[u]             & {\circ}^{4}
}
$$
They are not of finite type or of affine type. Therefore, the theorem follows by Theorem \ref{thmART} and Lemma \ref{lemART}.
\end{proof}

\section{Nichols algebras in $\CYD$}\label{secNicholsalg}
In this section, we determine all finite-dimensional Nichols algebras over indecomposable objects in $\CYD$ and present them by generators and relations.
\subsection{The simple and projective objects in $\CYD$}
We describe the simple and indecomposable projective objects in $\CYD$ by using the equivalence $\CYD\cong {}_{\D}\mathcal{M}$ \cite[Proposition\,10.6.16.]{M93}.

\begin{pro}\label{proYD-1}
Let $\K_{\chi^i}=\K\{v\}\in{}_{\D}\mathcal{M}$ for $ i\in \I_{0,5}$. Then $\K_{\chi^i}\in\CYD$ with the Yetter-Drinfeld module structure given by
\begin{align*}
a\cdot v=\xi^i v,\quad b\cdot v=0,\quad \delta(v)=a^{3i}\otimes v.
\end{align*}
\end{pro}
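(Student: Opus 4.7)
The plan is to invoke the equivalence ${}^{\C}_{\C}\mathcal{YD}\cong{}_{\D}\mathcal{M}$ of \cite[Proposition 10.6.16]{M93} and exhibit the claimed action and coaction as those that induce the $\D$-module $\K_{\chi^i}$ of Definition \ref{onesimple}. Under this equivalence, a left $\D$-module becomes a left $\C$-module via the inclusion $\C\hookrightarrow\D$, $c\mapsto 1\otimes c$, so the $\C$-action is read off directly: $a\cdot v=\chi^i(a)v=\xi^i v$ and $b\cdot v=\chi^i(b)v=0$, which is what we want.

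For the coaction, I would use that the left $\C$-comodule structure is determined by the action of $\C^{*op\,cop}\cong\A_1^{op\,cop}\subset\D$, translated via the Hopf algebra isomorphism $\phi:\A_1\to\C\As$ recalled in Remark \ref{rmkDDDDD}(3). Concretely, for a candidate coaction $\delta(v)=c\otimes v$ with $c\in\C$, the associated $\C\As$-action on $v$ is $\xi\cdot v=\langle\xi,c\rangle v$ for $\xi\in\C\As$, and the claim becomes the identity $\langle\phi(h),c\rangle=\chi^i(h)$ for all $h\in\A_1$. First I would test $c=a^{3i}$ against the algebra generators of $\A_1$: using the explicit formula for $\phi$,
\begin{align*}
\langle\phi(g),a^{3i}\rangle&=\sum_{j=0}^{5}\xi^{-j}\langle(a^j)\As,a^{3i}\rangle=\xi^{-3i}=\xi^{3i}=(-1)^i=\chi^i(g),\\
\langle\phi(x),a^{3i}\rangle&=\theta\sum_{j=0}^{5}\xi^{-(j+1)}\langle(ba^j)\As,a^{3i}\rangle=0=\chi^i(x),
\end{align*}
which matches the required $\D$-action. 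Since $\{a^{3i}\}$ is grouplike in $\C$, coassociativity and counitality of $\delta$ are automatic.

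Finally, I would verify the Yetter--Drinfeld compatibility
\[
\delta(h\cdot v)=h_{(1)}v_{(-1)}S(h_{(3)})\otimes h_{(2)}\cdot v_{(0)}
\]
on the generators $h=a,b$. For $h=a$, both sides equal $\xi^i a^{3i}\otimes v$ after a short calculation using $\Delta(a)=a\otimes a+(\xi^4+\xi^5)b\otimes ba^3$, since the $b\otimes ba^3$ contribution dies on the right-hand side against $b\cdot v=0$ and on the left after applying $\delta$. For $h=b$, the left side vanishes because $b\cdot v=0$, and the right side vanishes because $\Delta(b)=b\otimes a^4+a\otimes b$ and both summands end in a factor $b\cdot v_{(0)}=0$ or a $b$-component acting on $v$.

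\textbf{Expected obstacle.} The only non-routine point is matching conventions: making sure that the $\A_1$-action on a $\D$-module induces the \emph{left} $\C$-comodule structure with the formula $\delta(v)=c\otimes v$ rather than its opposite, because $\D$ involves $\C^{op\,cop}$ and $\A_1^{op\,cop}$. Once the evaluation pairing $\langle\phi(\cdot),a^{3i}\rangle$ is seen to reproduce $\chi^i$, the rest is a direct verification.
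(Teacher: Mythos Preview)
Your approach is essentially the paper's: invoke the equivalence $\CYD\cong{}_{\D}\mathcal{M}$, read off the $\C$-action by restriction, and recover the coaction from the $\A_1\cong\C\As$-action. The paper is marginally slicker in that it first observes that on a one-dimensional comodule the coaction must be by a grouplike, so $h\in\G(\C)=\{1,a^3\}$, and then the single condition $\langle g,h\rangle=(-1)^i$ determines $h=a^{3i}$; you instead guess $c=a^{3i}$ and test it against both generators $g,x$, which is fine.

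One correction, though it concerns a redundant step. Once you have shown that the stated action and coaction are the ones induced by the $\D$-module via the equivalence, the Yetter--Drinfeld compatibility is automatic, so your final verification is unnecessary. More to the point, your argument for $h=b$ is not right as written: the compatibility condition involves the iterated coproduct
\[
\Delta^2(b)=b\otimes a^4\otimes a^4+a\otimes b\otimes a^4+a\otimes a\otimes b+(\xi^4+\xi^5)\,b\otimes ba^3\otimes b,
\]
and it is the \emph{middle} tensorand that acts on $v_{(0)}$. In the first and third summands this is $a^4$ and $a$, not $b$, so those terms do not vanish for the reason you state. The right-hand side does vanish, but only after an honest cancellation between $\xi^{4i}\,ba^{3i+2}\otimes v$ and $\xi^{3-2i}\,ba^{3i+2}\otimes v=-\xi^{4i}\,ba^{3i+2}\otimes v$. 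Since the whole check is superfluous, the cleanest fix is simply to drop it.
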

\begin{proof}
Since $\K_{\chi^i}$ is one-dimensional, the $\C$-action is given by the restriction of the character of $\D$ given in Definition $\ref{onesimple}$ and the coaction is of the form $\delta(v)=h\otimes v$ such that $\langle g, h\rangle v=(-1)^iv$, where $h\in G(\C)=\{1,a^3\}$. It follows that $a\cdot v=\xi^i v,\, b\cdot v=0$ and $\delta(v)=a^{3i}\otimes v$.
\end{proof}

\begin{pro}\label{proYD-2}
Let $V_{i,j}=\K\{v_1,v_2\}\in{}_{\D}\mathcal{M}$ for $(i,j)\in\Lambda$. Then $V_{i,j}\in\CYD$ with the Yetter-Drinfeld  module structure  given by
\begin{gather}
a\cdot v_1=\xi^iv_1,\quad b\cdot v_1=0,\quad a\cdot v_2=\xi^{i+1} v_2,\quad b\cdot v_2=v_1,\label{eqMod-Vij}\\
\delta(v_1)=a^{-j}\otimes v_1+\xi^{4}(\xi^{4i}-\xi^{i+j})ba^{-1-j}\otimes v_2,\
\delta(v_2)=a^{3-j}\otimes v_2+(\xi^{2i}+\xi^{j-i})ba^{2-j}\otimes v_1.\label{eqComod-Vij}
\end{gather}
\end{pro}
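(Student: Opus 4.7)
The plan is to transport the $\D$-module structure on $V_{i,j}$ of Definition \ref{twosimple} across the equivalence $\CYD\cong{}_{\D}\M$ of \cite[Proposition\,10.6.16.]{M93} and to match the resulting action and coaction with the asserted formulas.

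Under this equivalence, a left $\D$-module $V$ becomes an object of $\CYD$ as follows. The $\C$-action is obtained by restricting along $\C^{cop}\subset\D$; since $\C^{cop}$ and $\C$ have the same underlying algebra, reading off $[a]_{i,j}$ and $[b]_{i,j}$ from Definition \ref{twosimple} on the basis $\{v_1,v_2\}$ directly yields (\ref{eqMod-Vij}) with no further work. The $\C$-coaction is obtained from the $\C^{*op\,cop}$-subalgebra by the formula $\delta(v)=\sum_m c_m\otimes c^m\cdot v$, summed over dual bases $\{c_m\}$ of $\C$ and $\{c^m\}$ of $\C\As$.

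To compute each $c^m\cdot v$ explicitly I would use the Hopf algebra isomorphism $\phi:\A_1\to\C\As$ of Remark \ref{rmkDDDDD}(3). Fourier inversion on $\Z_6$ applied to the defining relations $\phi(g^k)=\sum_l\xi^{-kl}(a^l)\As$ and $\phi(g^k x)=\theta\sum_l\xi^{-(l+1)k}(ba^l)\As$ produces
\begin{gather*}
\phi^{-1}((a^l)\As)=\tfrac{1}{6}\sum_{k=0}^{5}\xi^{kl}g^k,\qquad \phi^{-1}((ba^l)\As)=\tfrac{1}{6\theta}\sum_{k=0}^{5}\xi^{k(l+1)}g^k x.
\end{gather*}
Applying these operators to $v_1$ and $v_2$ via the matrices $[g]_{i,j}$ and $[x]_{i,j}$, and invoking orthogonality of characters on $\Z_6$, each sum collapses to a single index $l$, producing the formulas (\ref{eqComod-Vij}).

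The main subtlety is that the $\D$-multiplication restricted to the $\C^{*op\,cop}$-sector is opposite to the multiplication of $\C\As$ transported via $\phi$, so as operators on $V$ the $\D$-product $(g^k x)_{\D}$ equals $(-1)^k\phi(g^k x)$, introducing a twist $\xi^{3k}$ when $(ba^l)\As$ is unpacked through the matrices. Tracking this twist is what converts the exponents produced by the naive Fourier sum into the asserted $-1-j$ and $2-j$; the coefficients $\xi^{4}(\xi^{4i}-\xi^{i+j})$ and $\xi^{2i}+\xi^{j-i}$ then reduce to the identities $\xi^{i-2}(\xi^{3i}-\xi^j)=\xi^{4}(\xi^{4i}-\xi^{i+j})$ and $\xi^{-i}(\xi^{3i}+\xi^j)=\xi^{2i}+\xi^{j-i}$. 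Once (\ref{eqMod-Vij}) and (\ref{eqComod-Vij}) are established, the Yetter-Drinfeld axiom is automatic from the equivalence.
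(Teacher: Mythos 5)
Your proposal is correct and follows essentially the same route as the paper: the action is read off from Definition \ref{twosimple}, and the coaction is computed from the dual-basis formula $\delta(v)=\sum_m c_m\otimes c^m\cdot v$ by transporting through the isomorphism $\phi:\A_1\to\C\As$ of Remark \ref{rmkDDDDD}(3) and collapsing the resulting $\Z_6$-Fourier sums (the paper simply writes the inverted dual basis $(g^k)\As=\tfrac16\sum_l\xi^{kl}a^l$, $(g^kx)\As=\tfrac1{6\theta}\sum_l\xi^{k(l+1)}ba^l$ and calls the rest a direct computation). Your explicit tracking of the $(-1)^k=\xi^{3k}$ twist coming from the $op$ in $\C^{*op\,cop}$ is exactly the sign needed to land on the exponents $-1-j$ and $2-j$, so the argument is sound.
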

\begin{proof}
The $\C$-action is given by the restriction of the $\D$-action given in Definition $\ref{twosimple}$ and the comodule structure is given by $\delta(v)=\sum_{i=1}^{12}c_i\otimes c^i\cdot v$ for $v\in V_{i,j}$, where $\{c_i\}_{1\leq i\leq 12}$ and $\{c^i\}_{1\leq i\leq 12}$ are the dual bases of $\C$ and $\C\As$.  By Remark $\ref{rmkDDDDD}$, we have $(g^i)\As=\frac{1}{6}\sum_{j=0}^{5}\xi^{ij}a^j$ and $(g^ix)\As=\frac{1}{6\theta}\sum_{j=0}^{5}\xi^{i(j+1)}ba^j$. Then the proposition follows by a direct computation.
\end{proof}
\begin{rmk}\label{rmkDmoddual}
Using the formula \eqref{eqDual-YD}, we have $V_{i,j}\As\cong V_{-i-1,-j-3}$ for all $(i,j)\in \Lambda$ .
\end{rmk}
\begin{pro}\label{proYD-3}
Let $\Pp(\K_{\chi^j})=\K\{p_{1,j},p_{2,j},p_{3,j},p_{4,j}\}\in{}_{\D}\mathcal{M}$ for $j\in \I_{0,5}$. Then $\Pp(\K_{\chi^j})\in\CYD $ with the module structure given by $\eqref{eqprojective0}$, $\eqref{eqprojectivei}$ and the comodule structure given by
\begin{align*}
\delta(p_{1,j})&=(a^3)^j\otimes p_{1,j}+\theta^{-1}(-1)^jba^5(a^3)^j\otimes (\theta p_{2,j}+2p_{3,j}),\\
\delta(p_{2,j})&=(a^3)^{j+1}\otimes p_{2,j}+2(-1)^jba^2(a^3)^j\otimes p_{4,j},\\
\delta(p_{3,j})&=(a^3)^{j+1}\otimes p_{3,j}+\xi^5\theta^{-1}(-1)^jba^2(a^3)^j\otimes p_{4,j},\quad
\delta(p_{4,j})=(a^3)^j\otimes p_{4,j}.
\end{align*}
\end{pro}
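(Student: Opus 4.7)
The plan is to mimic exactly the strategy used in the proofs of Propositions \ref{proYD-1} and \ref{proYD-2}: exploit the equivalence ${}_{\D}\mathcal{M}\cong\CYD$ coming from \cite[Proposition 10.6.16]{M93}, where the $\C$-action is the restriction of the $\D$-action to the subalgebra $\C\subset\D$, and the $\C$-comodule structure is reconstructed from the left action of the dual piece $\C^{\ast\,op\,cop}\cong\A_1^{op\,cop}\subset\D$ via the canonical formula
\begin{align*}
\delta(v)=\sum_{k}c_k\otimes c^k\cdot v,
\end{align*}
summed over a basis $\{c_k\}$ of $\C$ with dual basis $\{c^k\}$ of $\C\As$. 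Taking $\{c_k\}=\{a^j,ba^j\mid j\in\I_{0,5}\}$, the action is already determined by \eqref{eqprojective0} and \eqref{eqprojectivei}, so only the coaction remains to be verified.

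Using the formulas $(a^j)\As=\tfrac16\sum_i\xi^{ij}g^i$ and $(ba^j)\As=\tfrac{1}{6\theta}\sum_i\xi^{i(j+1)}g^ix$ already recorded in the proof of Proposition \ref{proYD-2} (via $\phi^{-1}$), I would compute $(a^j)\As\cdot p_{i,j'}$ and $(ba^j)\As\cdot p_{i,j'}$ one vector at a time. Starting from $p_{4,j}$, note that \eqref{eqprojective0} gives $x\cdot p_4=0$ and $g\cdot p_4=p_4$, so \eqref{eqprojectivei} yields $x\cdot p_{4,j}=0$ and $g^i\cdot p_{4,j}=(-1)^{ij}p_{4,j}$. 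Hence $(ba^j)\As\cdot p_{4,j'}=0$ and $(a^k)\As\cdot p_{4,j'}=\tfrac16\sum_i\xi^{i(k+3j')}p_{4,j'}$, which collapses by Fourier orthogonality to $p_{4,j'}$ if $k\equiv -3j'\equiv 3j'\pmod 6$ and $0$ otherwise, giving $\delta(p_{4,j'})=(a^3)^{j'}\otimes p_{4,j'}$. The same orthogonality trick, together with the explicit actions $b\cdot p_1=0$, $x\cdot p_1=\theta p_2+2p_3$, $b\cdot p_2=0$, $x\cdot p_2=0$ but $b\cdot(x\cdot p_1)=b\cdot(\theta p_2+2p_3)=2\theta p_4$, etc., determines each of the three remaining coactions; the purely diagonal $a^k$-contributions control the grouplike part $(a^3)^{?}\otimes p_{i,j}$, while the $ba^k$-contributions produce the off-diagonal $ba^\bullet\otimes(\cdot)$ corrections listed in the statement.

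The only real obstacle is bookkeeping: keeping track of the scalars $\theta$, signs $(-1)^{i(\cdot)}$ and powers of $\xi$ when twisting $\Pp(\K_\epsilon)$ by $\K_{\chi^j}$ according to \eqref{eqprojectivei}. After this arithmetic, no additional compatibility needs to be checked: the Yetter--Drinfeld axioms are automatic from the construction, because the functor ${}_\D\M\to\CYD$ is an equivalence of categories, so the resulting $(V,\cdot,\delta)$ with $V=\Pp(\K_{\chi^j})$ is a bona fide object of $\CYD$ once the coaction is written down correctly. The final step is merely to match the closed formulas obtained from the calculation with the expressions claimed in the statement.
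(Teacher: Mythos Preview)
Your approach is exactly the paper's: the paper merely says that the proof follows the same lines as Proposition~\ref{proYD-2}, i.e., restrict the $\D$-action to $\C$ and compute the coaction via $\delta(v)=\sum_k c_k\otimes c^k\cdot v$ using the dual-basis identities from Remark~\ref{rmkDDDDD}. One small caution on the bookkeeping: the $b$-actions you list are irrelevant to the coaction (only the $\A_1$-generators $g,x$ enter the formula), and from \eqref{eqprojective0} one actually has $b\cdot p_1=\theta p_3$ and $x\cdot p_2=2\theta p_4$, not $0$; with these corrected, your recipe reproduces the stated formulas verbatim.
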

\begin{proof}
The proof follows the same lines of Proposition \ref{proYD-2}.
\end{proof}
Now we describe  braidings of the simple and indecomposable projective objects in $\CYD$.
\begin{pro}\label{braidingone}
Let $\K_{\chi^i}=\K\{v\}\in\CYD$ for $i\in\I_{0,5}$. Then $c(v\otimes v)=(-1)^iv\otimes v$.
\end{pro}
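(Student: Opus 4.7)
The plan is to compute $c(v \otimes v)$ directly from the formula \eqref{equbraidingYDcat} for the braiding in $\CYD$, using the explicit Yetter-Drinfeld structure on $\K_{\chi^i}$ recorded in Proposition \ref{proYD-1}. By \eqref{equbraidingYDcat},
\[
c(v\otimes v)=v_{(-1)}\cdot v\otimes v_{(0)},
\]
so the entire computation reduces to evaluating $a^{3i}\cdot v$, since $\delta(v)=a^{3i}\otimes v$.

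Next, I would iterate the $\C$-action $a\cdot v=\xi^{i}v$ a total of $3i$ times to obtain $a^{3i}\cdot v=\xi^{3i^{2}}v$. Hence $c(v\otimes v)=\xi^{3i^{2}}(v\otimes v)$, and the only remaining point is to identify the scalar $\xi^{3i^{2}}$ with $(-1)^{i}$.

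For this final step I would use that $\xi$ is a primitive $6$-th root of unity, so $\xi^{3}=-1$, giving $\xi^{3i^{2}}=(-1)^{i^{2}}$. Since $i^{2}\equiv i\pmod 2$, we get $(-1)^{i^{2}}=(-1)^{i}$, completing the proof. There is no real obstacle here; the statement is essentially a one-line verification once the module and comodule data of Proposition \ref{proYD-1} are plugged into the braiding formula.
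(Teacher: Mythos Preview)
Your proof is correct and follows exactly the approach indicated in the paper: a direct computation using the braiding formula \eqref{equbraidingYDcat} together with the Yetter--Drinfeld structure from Proposition~\ref{proYD-1}. The paper simply omits the explicit details you have spelled out.
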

\begin{proof}
It follows by a direct computation using the formula \eqref{equbraidingYDcat} and Proposition \ref{proYD-1}.
\end{proof}

\begin{pro}\label{probraidsimpletwo}
The  braiding of $V_{i,j}=\K\{v_1,v_2\}\in\CYD$ for $(i,j)\in\Lambda$ is given by
  \begin{align*}
   c(\left[\begin{array}{ccc} v_1\\v_2\end{array}\right]\otimes\left[\begin{array}{ccc} v_1~v_2\end{array}\right])=
   \left[\begin{array}{ccc}
   \xi^{-ij}v_1\otimes v_1    & \xi^{-j(i+1)}v_2\otimes v_1+(\xi^{-ij}+\xi^{(3-j)(i+1)})v_1\otimes v_2\\
   \xi^{i(3-j)}v_1\otimes v_2   & \xi^{(3-j)(i+1)}v_2\otimes v_2+(\xi^{4i-ij+2-j}+\xi^{i-ij+2})v_1\otimes v_1
         \end{array}\right].
  \end{align*}
\end{pro}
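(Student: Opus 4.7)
My plan is to compute the four matrix entries of the braiding by directly applying the formula \eqref{equbraidingYDcat}, which says $c(v\otimes w)=v_{(-1)}\cdot w\otimes v_{(0)}$, using the explicit comodule structure of $V_{i,j}$ from \eqref{eqComod-Vij} together with the module structure from \eqref{eqMod-Vij}. Since $\delta(v_1)$ and $\delta(v_2)$ each have two homogeneous summands, each output $c(v_p\otimes v_q)$ a priori has two terms, and half of these will vanish because $b\cdot v_1=0$.

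First I would handle the two ``easy'' entries. For $c(v_1\otimes v_q)$, the second summand of $\delta(v_1)$ carries a factor $ba^{-1-j}$, and since $b\cdot v_1=0$ while $b\cdot v_2=v_1$, one gets $c(v_1\otimes v_1)=\xi^{-ij}v_1\otimes v_1$ (one term) and $c(v_2\otimes v_1)=\xi^{i(3-j)}v_1\otimes v_2$ (one term), after noting $a^{-j}\cdot v_1=\xi^{-ij}v_1$ and $a^{3-j}\cdot v_1=\xi^{i(3-j)}v_1$. These match the diagonal entries of the stated matrix immediately.

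For the off-diagonal entries both summands of the coaction contribute. For $c(v_1\otimes v_2)$, the term $a^{-j}\cdot v_2\otimes v_1=\xi^{-j(i+1)}v_2\otimes v_1$ is clear; for the other summand I must compute $(ba^{-1-j})\cdot v_2=\xi^{-(i+1)(1+j)}v_1$ and then multiply by the coefficient $\xi^{4}(\xi^{4i}-\xi^{i+j})$. Expanding the exponents, one piece reduces to $\xi^{(3-j)(i+1)}$ and the other to $-\xi^{3-ij}=\xi^{-ij}$ (using $\xi^3=-1$), giving the advertised coefficient $\xi^{-ij}+\xi^{(3-j)(i+1)}$. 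The computation of $c(v_2\otimes v_2)$ is analogous: $a^{3-j}\cdot v_2\otimes v_2=\xi^{(i+1)(3-j)}v_2\otimes v_2$, and the second summand produces $(\xi^{2i}+\xi^{j-i})\,\xi^{(i+1)(2-j)}v_1\otimes v_1$, which after expanding is exactly $(\xi^{4i-ij+2-j}+\xi^{i-ij+2})v_1\otimes v_1$.

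There is no conceptual obstacle — the only danger is a bookkeeping slip in tracking the exponents of $\xi$ and in remembering the simplification $\xi^3=-1$ which is what makes the $-\xi^{3-ij}$ term combine cleanly with the other off-diagonal contribution. Thus the whole proof is a four-line calculation of $v_{(-1)}\cdot w\otimes v_{(0)}$ evaluated on each basis vector, and I would present it as such.
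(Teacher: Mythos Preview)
Your proposal is correct and is exactly the approach the paper takes: the paper's proof is simply ``It follows by a direct computation using the formula \eqref{equbraidingYDcat} and Proposition \ref{proYD-2},'' and you have spelled out that computation. Two cosmetic slips worth fixing in write-up: the two ``easy'' entries are $c(v_p\otimes v_1)$ (second argument $v_1$, so that $b\cdot v_1=0$ kills the extra term), not ``$c(v_1\otimes v_q)$'', and they form the first \emph{column} of the displayed matrix rather than its diagonal.
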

\begin{proof}
It follows by a direct computation using the formula \eqref{equbraidingYDcat} and Proposition \ref{proYD-2}.
\end{proof}
\begin{rmk}
The braided vector spaces $V_{i,j}$ with $(i,j)\in\Lambda$ have already appeared in \cite{Hi93} and also in \cite{AGi17}. More precisely,
the braided vector spaces $V_{i,0}$ and $V_{i,3}$  belong to the case $\mathfrak{R}_{2,1}$ in \cite{Hi93,AGi17}, and the others belong to the case $\mathfrak{R}_{1,2}$.
\end{rmk}
\begin{pro}\label{probraidingprocover}
The braiding of $\Pp(\K_{\chi^j})=\K\{p_{i,j}\}_{i\in\I_{1,4}}\in\CYD$ for $j\in\I_{0,5}$ is given by
\begin{align*}
   c(p_{1,j}\otimes\left[\begin{array}{ccc} p_{1,j}\\p_{2,j}\\p_{3,j}\\p_{4,j}\end{array}\right])&=
   \left[\begin{array}{ccc} (-1)^jp_{1,j} \\p_{2,j}\\p_{3,j}\\(-1)^jp_{4,j}\end{array}\right]\otimes p_{1,j}+
   \left[\begin{array}{ccc}p_{3,j} \\ (-1)^j\theta^{-1}\xi^{-1}p_{4,j}\\0\\0\end{array}\right]\otimes (\theta p_{2,j}{+}2p_{3,j}),\\
\end{align*}
\vspace{-1.0cm}
\begin{align*}
   c(p_{2,j}\otimes\left[\begin{array}{ccc} p_{1,j}\\p_{2,j}\\p_{3,j}\\p_{4,j}\end{array}\right])&=
   \left[\begin{array}{ccc}p_{1,j} \\(-1)^{j+1}p_{2,j}\\(-1)^{j+1}p_{3,j}\\p_{4,j}\end{array}\right]\otimes p_{2,j}+
   \left[\begin{array}{ccc}2(-1)^{j+1}\xi^{-1}\theta^{-1} p_{3,j}\\2\xi^{5}(-1)^jp_{4,j}\\0\\0\end{array}\right]\otimes p_{4,j},\\
   c(p_{3,j}\otimes\left[\begin{array}{ccc} p_{1,j}\\p_{2,j}\\p_{3,j}\\p_{4,j}\end{array}\right])&=
   \left[\begin{array}{ccc}p_{1,j} \\(-1)^{j+1}p_{2,j}\\(-1)^{j+1}p_{3,j}\\p_{4,j}\end{array}\right]\otimes p_{3,j}+
   \left[\begin{array}{ccc}(-1)^{j}\xi^{-1} p_{3,j}\\\theta^{-1}\xi p_{4,j}\\0\\0\end{array}\right]\otimes p_{4,j},\\
   c(p_{4,j}\otimes\left[\begin{array}{ccc} p_{1,j}\\p_{2,j}\\p_{3,j}\\p_{4,j}\end{array}\right])&=
   \left[\begin{array}{ccc}(-1)^jp_{1,j} \\p_{2,j}\\p_{3,j}\\(-1)^jp_{4,j}\end{array}\right]\otimes p_{4,j}.
\end{align*}
\end{pro}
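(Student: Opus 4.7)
The plan is a direct computation based on the braiding formula \eqref{equbraidingYDcat}, namely $c(v\otimes w) = v_{(-1)}\cdot w \otimes v_{(0)}$, applied to each pair $(p_{k,j},p_{l,j})$ with $k,l\in\I_{1,4}$. The two ingredients are already in hand: the coactions $\delta(p_{k,j})$ are listed in Proposition \ref{proYD-3}, and the $\C$-action on $\Pp(\K_{\chi^j})$ is determined by \eqref{eqprojective0} together with \eqref{eqprojectivei}. So the proposition is a bookkeeping result rather than a conceptual one, and I would organise the verification column by column in the statement.

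First, I would record the action of the relevant group-like and skew-primitive elements on the basis $\{p_{i,j}\}_{i\in\I_{1,4}}$. Since $\delta(p_{k,j})$ only involves the elements $a^{3j}$, $a^{3j+3}$ and the skew-primitives $ba^{\ell}a^{3j}$ for a few values of $\ell$, I only need compact formulas for $a^{3j}\cdot p_{l,j}$, $a^{3j+3}\cdot p_{l,j}$, $ba^{5}a^{3j}\cdot p_{l,j}$, and $ba^{2}a^{3j}\cdot p_{l,j}$. These are obtained by iterating the matrices in \eqref{eqprojective0} and twisting by the scalar $\xi^j$ (for $a$ and $b$) coming from \eqref{eqprojectivei}; in particular each $a^{3j}$ just contributes an overall sign $(-1)^j$ on every basis vector, and the factors of $(-1)^j$ visible in the statement all come from this observation.

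Second, I would plug these actions into $c(p_{k,j}\otimes p_{l,j})$ column by column. For $k=1$, the coaction $\delta(p_{1,j}) = (a^3)^j\otimes p_{1,j} + \theta^{-1}(-1)^j ba^5(a^3)^j\otimes (\theta p_{2,j}+2p_{3,j})$ immediately yields the leading diagonal term $(-1)^j p_{l,j}\otimes p_{1,j}$ (adjusted by a sign depending on $l$) plus a correction of the form $(ba^5\cdot p_{l,j})\otimes(\theta p_{2,j}+2p_{3,j})$; computing $ba^5\cdot p_{l,j}$ from the matrices of $a$ and $b$ gives the claimed $p_{3,j}$ or $\theta^{-1}\xi^{-1}p_{4,j}$ correction for $l=1,2$ and zero for $l=3,4$. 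The cases $k=2,3$ are analogous, using $\delta(p_{k,j})=(a^3)^{j+1}\otimes p_{k,j}+(\ast)\,ba^2(a^3)^j\otimes p_{4,j}$; now the diagonal term contributes the sign pattern $p_{1,j},-p_{2,j},-p_{3,j},p_{4,j}$ dictated by $a^3$ acting as $+1,-1,-1,+1$ respectively, and the correction $ba^2\cdot p_{l,j}$ vanishes for $l=3,4$ while for $l=1,2$ it produces a multiple of $p_{3,j}$ or $p_{4,j}$. Finally, $k=4$ is the easiest: $\delta(p_{4,j}) = (a^3)^j\otimes p_{4,j}$ gives $c(p_{4,j}\otimes p_{l,j})=((a^3)^j\cdot p_{l,j})\otimes p_{4,j}$, which matches the final row.

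The main obstacle, as always in such calculations, is not conceptual but numerical: correctly tracking the powers of $\xi$, the square root $\theta$ (with $\theta^2=\xi^2$), and the signs $(-1)^j$. I would double-check the coefficients $2$, $\theta^{-1}\xi^{-1}$, $2\xi^5$, etc.\ against the explicit matrices $[b]$ and $[x]$ in \eqref{eqprojective0} and the compatibility between $\delta$ and the action required by $\C$-comodule axioms. Once the eight independent computations (one per pair $(k,l)$ modulo the symmetry of the table) are executed, the stated formulas read off directly and the proposition is established.
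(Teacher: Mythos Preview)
Your proposal is correct and follows exactly the same approach as the paper, which simply states that the result follows by a direct computation using the braiding formula \eqref{equbraidingYDcat} and the comodule structure in Proposition~\ref{proYD-3}. Your elaboration of the bookkeeping (tracking the $a^{3j}$-signs, the skew-primitive corrections via $ba^5$ and $ba^2$, and the easy $k=4$ case) is precisely what the paper's one-line proof suppresses.
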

\begin{proof}
It follows by a direct computation using the formula \eqref{equbraidingYDcat} and Proposition \ref{proYD-3}.
\end{proof}

\subsection{Nichols algebras over the indecomposable objects in $\CYD$}
We determine all finite-dimensional Nichols algebras over the indecomposable objects in $\CYD$.  We first study the Nichols algebras over the one-dimensional objects and their projective covers in $\CYD$.
\begin{lem}\label{lemNicholbyone}
The Nichols algebra $\BN(\K_{\chi^k})$ over $\K_{\chi^k}:=\K\{v\}$ for $k\in\I_{0,5}$ is
\begin{align*}
\BN(\K_{\chi^k})=\begin{cases}
\K[v] &\text{~if~} k\in\{0,\,2,\,4\};\\
\bigwedge \K_{\chi^k} &\text{~if~} k\in\{1,\,3,\,5\}.
\end{cases}
\end{align*}
Moreover, let $V=\oplus_{i\in I}V_i$, where $V_i\cong \K_{\chi^{k_i}}$ with $k_i\in\{1,3,5\}$ and $I$ is a finite index set.
Then $\BN(V)=\bigwedge V\cong \otimes_{i\in I}\BN(V_i)$.
\end{lem}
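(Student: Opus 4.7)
The plan is to reduce everything to the self- and cross-braidings of the summands, all of which are of diagonal type. By Proposition \ref{braidingone} applied to $\K_{\chi^k}=\K\{v\}$, the self-braiding is $c(v\otimes v)=(-1)^kv\otimes v$, so set $q:=(-1)^k$. If $k\in\{0,2,4\}$ then $q=1$: Remark \ref{rmkN-infity} already gives $\dim\BN(\K_{\chi^k})=\infty$, and the skew-derivation formula \eqref{eqSkew-1} yields $\partial_v(v^n)=(n)_1 v^{n-1}=nv^{n-1}\neq 0$ in characteristic zero. By the characterization \eqref{Def-Nichols-IV} no power of $v$ lies in $I(V)$, so $\BN(\K_{\chi^k})=T(\K v)=\K[v]$.

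If $k\in\{1,3,5\}$ then $q=-1$, and a direct computation of the braided comultiplication on $T(V)$ gives $\Delta(v^2)=v^2\otimes 1+(1+q)v\otimes v+1\otimes v^2=v^2\otimes 1+1\otimes v^2$, equivalently $\partial_v(v^2)=(2)_{-1}v=0$. Hence $v^2\in I^2(V)$ by \eqref{Def-Nichols-IV}. The quotient $T(V)/(v^2)=\K[v]/(v^2)\cong\bigwedge\K_{\chi^k}$ is strictly graded with primitives concentrated in degree one, so it already satisfies Definition \ref{defiNichols} and $\BN(\K_{\chi^k})=\bigwedge\K_{\chi^k}$.

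For the last statement, write $V=\oplus_{i\in I}V_i$ with $V_i=\K\{v_i\}\cong\K_{\chi^{k_i}}$, $k_i\in\{1,3,5\}$. Using $\delta(v_i)=a^{3k_i}\otimes v_i$ from Proposition \ref{proYD-1} together with $a\cdot v_j=\xi^{k_j}v_j$, the braiding formula \eqref{equbraidingYDcat} gives $c(v_i\otimes v_j)=\xi^{3k_ik_j}v_j\otimes v_i=(-1)^{k_ik_j}v_j\otimes v_i=-v_j\otimes v_i$, since each product $k_ik_j$ is odd. Thus the full braiding on $V$ is $c=-\tau$. The same primitivity calculation as above, carried out in the braided tensor algebra, yields $\Delta(v_iv_j+v_jv_i)=(v_iv_j+v_jv_i)\otimes 1+1\otimes(v_iv_j+v_jv_i)$ for all $i,j\in I$ (reducing to $v_i^2$ when $i=j$). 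So all the skew-symmetric relations lie in $I(V)$ and $\BN(V)$ is a quotient of $\bigwedge V$.

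The mildly delicate step, and the only real obstacle, is the reverse inclusion. One checks that $\bigwedge V$, equipped with the braided bialgebra structure in $\CYD$ extending $c=-\tau$ and the primitive coproduct on $V$, has primitive subspace exactly $V$; this is the standard characteristic-zero exterior-algebra computation, using that the graded dual skew-derivations $\partial_{v^i}$ annihilate only the degree-one part. Hence $\BN(V)=\bigwedge V$. The decomposition $\BN(V)\cong\bigotimes_{i\in I}\BN(V_i)$ then follows either from the classical identification $\bigwedge(\oplus V_i)\cong\bigotimes\bigwedge V_i$ or, equivalently, from Gra\~na's factorization principle for Nichols algebras: the relation $c_{V_j,V_i}\circ c_{V_i,V_j}=\id_{V_i\otimes V_j}$ for all $i,j\in I$ means the subspaces $V_i$ generate commuting sub-Nichols algebras in the braided sense, so $\BN(V)$ is their braided tensor product.
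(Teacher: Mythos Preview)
Your proof is correct and follows the same line as the paper's own argument: reduce to the diagonal braiding via Proposition~\ref{braidingone} and Proposition~\ref{proYD-1}, observe $c(v_i\otimes v_j)=-v_j\otimes v_i$ for all $i,j$ (since the $k_i$ are odd), conclude $\BN(V)=\bigwedge V$, and invoke Gra\~na's factorization \cite[Theorem~2.2]{G00} for the tensor decomposition. The only difference is that you spell out the skew-derivation computations and the primitivity check in $\bigwedge V$ where the paper simply writes ``follows immediately'' and jumps straight to the citation; the underlying ideas are identical.
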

\begin{proof}
The first claim follows immediately by Proposition $\ref{braidingone}$. Let $V_i=\K\{v_{k_i}\}$ for $i\in I$ and $k_i\in\{1,3,5\}$, then by Proposition \ref{proYD-1}, $c(v_{k_i}\otimes v_{k_j})=(-1)^{k_ik_j}v_{k_j}\otimes v_{k_i}=-v_{k_j}\otimes v_{k_i}$, which implies that $\BN(V)=\bigwedge V$ and $c_V^2=\id_V$. The last isomorphism follows by \cite[Theorem 2.2.]{G00}.
\end{proof}
\begin{lem}
The Nichols algebra $\BN(\Pp(\K_{\chi^j}))$ for $j\in\I_{0,5}$ is infinite-dimensional.
\end{lem}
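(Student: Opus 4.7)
The plan is to apply Remark \ref{rmkN-infity}: it suffices to exhibit, for each $j\in\I_{0,5}$, a nonzero element $v\in\Pp(\K_{\chi^j})$ with $c(v\otimes v)=v\otimes v$, since such an element generates a one-dimensional braided subspace $W=\K v$ with $c(W\otimes W)\subset W\otimes W$ and $\BN(W)=\K[v]$ infinite-dimensional.

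To find such a $v$, I would simply scan the diagonal entries of the braiding computed in Proposition \ref{probraidingprocover}. Reading off the coefficient of $p_{4,j}\otimes p_{4,j}$ in $c(p_{4,j}\otimes p_{4,j})$ gives $(-1)^j$, while reading off the coefficient of $p_{3,j}\otimes p_{3,j}$ in $c(p_{3,j}\otimes p_{3,j})$ gives $(-1)^{j+1}$ (the $\otimes p_{4,j}$ tail vanishes in both cases). Hence a parity split handles all $j$:
\begin{itemize}
\item if $j\in\{0,2,4\}$, take $v=p_{4,j}$, so that $c(v\otimes v)=v\otimes v$;
\item if $j\in\{1,3,5\}$, take $v=p_{3,j}$, so that $c(v\otimes v)=v\otimes v$.
\end{itemize}
In either case Remark \ref{rmkN-infity} yields $\dim\BN(\Pp(\K_{\chi^j}))=\infty$.

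There is no substantial obstacle here; the only thing to verify is that the relevant diagonal entries are correctly identified from the matrix form of $c$ in Proposition \ref{probraidingprocover}. An alternative, equally short route would be to observe that $\K p_{4,j}$ is actually a Yetter--Drinfeld subobject of $\Pp(\K_{\chi^j})$ isomorphic to $\K_{\chi^{2j}}$ (using Proposition \ref{proYD-3} and the action on $p_4$), so for $j$ even it contains a copy of $\K_\epsilon$ or $\K_{\chi^{2}}$ or $\K_{\chi^4}$, and Lemma \ref{lemNicholbyone} gives $\BN(\K p_{4,j})=\K[p_{4,j}]$; a symmetric observation on $p_{3,j}$ (which is not a YD subobject, but still spans a braided subspace with the required diagonal scalar) covers the odd case. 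The first approach is cleaner because it invokes Remark \ref{rmkN-infity} directly and does not require checking the coaction.
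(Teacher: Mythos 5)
Your proof is correct and is essentially identical to the paper's: the paper also reads off $c(p_{4,j}\otimes p_{4,j})=p_{4,j}\otimes p_{4,j}$ for $j\in\{0,2,4\}$ and $c(p_{3,j}\otimes p_{3,j})=p_{3,j}\otimes p_{3,j}$ for $j\in\{1,3,5\}$ from Proposition \ref{probraidingprocover} and then invokes Remark \ref{rmkN-infity}. The diagonal coefficients $(-1)^j$ and $(-1)^{j+1}$ are identified correctly, so nothing further is needed.
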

\begin{proof}
By Proposition $\ref{probraidingprocover}$, we have that $c(p_{4,j}\otimes p_{4,j})=p_{4,j}\otimes p_{4,j}$ for $j\in\{0,2,4\}$ and $c(p_{3,k}\otimes p_{3,k})=p_{3,k}\otimes p_{3,k}$ for $k\in\{1,3,5\}$. Then the lemma follows by Remark \ref{rmkN-infity}.
\end{proof}

Now we show that Nichols algebras over non-simple indecomposable objects are infinite-dimensional.

\begin{pro}\label{proNicholsindecom}
Let $V$ be a finite-dimensional non-simple indecomposable object in $\CYD$. Then $\dim\BN(V)=\infty$.
\end{pro}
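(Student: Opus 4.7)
My plan is to locate, inside $V$ itself or inside a carefully chosen Yetter-Drinfeld subobject of $V$, a copy of some $\K_{\chi^k}$ with $k\in\{0,2,4\}$. Any such copy satisfies $c(v\otimes v)=v\otimes v$ by Proposition \ref{braidingone}, so Remark \ref{rmkN-infity} immediately forces the Nichols algebra of the ambient braided vector space to be infinite-dimensional; Proposition \ref{proNicholsdual} will be the bridge between an object and its dual.

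First I would eliminate the two-dimensional simples $V_{i,j}$ from the list of composition factors of $V$. Passing to ${}_{\D}\mathcal{M}$ via the equivalence of categories, Lemma \ref{lemTwononsimpleindecom}(3)(4) shows that $\operatorname{Ext}^1_{\D}$ between $V_{i,j}$ and any simple vanishes on both sides, so each $V_{i,j}$ is both projective and injective. The standard two-step splitting (lift a quotient copy by projectivity, then peel it off by injectivity) then makes any $V_{i,j}$-composition factor a direct summand of $V$, so indecomposability and non-simplicity force every composition factor of $V$ to be some $\K_{\chi^l}$.

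Next I split on $\Soc(V)$. If $\Soc(V)$ contains $\K_{\chi^k}$ for some $k\in\{0,2,4\}$, the conclusion follows at once from Remark \ref{rmkN-infity}. Otherwise $\Soc(V)$ is a direct sum of $\K_{\chi^l}$'s with $l\in\{1,3,5\}$, and I would establish the key subclaim that $\Soc(V/\Soc(V))$ consists entirely of $\K_{\chi^k}$'s with $k\in\{0,2,4\}$. Let $S\cong\K_{\chi^s}$ be any simple summand of this socle and let $W'\subset V$ be its preimage, so $0\to\Soc(V)\to W'\to S\to 0$. If $s$ were odd, Lemma \ref{lemTwononsimpleindecom}(2) would give $\operatorname{Ext}^1_{\D}(\K_{\chi^s},\K_{\chi^l})=0$ for every odd $l$, so the sequence would split and produce a simple submodule $S'\cong S$ of $V$ intersecting $\Soc(V)$ trivially; but every simple submodule of $V$ lies in $\Soc(V)$, a contradiction.

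To conclude, I take $W:=\Soc_2(V)$, a Yetter-Drinfeld subobject strictly containing $\Soc(V)$ because $V$ is non-simple. Since the radical of $W$ is contained in $\Soc(W)=\Soc(V)$, the quotient $W/\Soc(V)$ is a direct summand of $\Top(W)$, so $\Top(W)$ contains $\K_{\chi^k}$ for some $k\in\{0,2,4\}$. Because $\K_{\chi^k}\As\cong\K_{\chi^{-k}}$ and $-k$ has the same parity as $k$, the socle $\Soc(W\As)=\Top(W)\As$ contains an even-indexed one-dimensional simple. Applying the easy case to $W\As$ yields $\dim\BN(W\As)=\infty$, whence $\dim\BN(W)=\infty$ by Proposition \ref{proNicholsdual}, and finally $\dim\BN(V)=\infty$ by Remark \ref{rmkN-infity} applied to $W\subset V$. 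The main obstacle I foresee is the splitting argument ruling out odd-indexed simples from $\Soc(V/\Soc(V))$; once that is in place, the transfer through duality and the second-socle layer is mechanical.
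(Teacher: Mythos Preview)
Your proof is correct and follows a genuinely different route from the paper's argument.

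The paper proceeds by induction on $\dim V$. The base case $\dim V=2$ invokes the explicit classification of two-dimensional non-simple indecomposables (Lemma~\ref{lemTwononsimpleindecom}(1)): any such module is some $M_l^k$ with socle $\K_{\chi^l}$ and top $\K_{\chi^{l+2k+1}}$, and since $l$ and $l+2k+1$ have opposite parities one of them is even, forcing $\dim\BN=\infty$. For $\dim V>2$ the paper takes the preimage $W$ of a simple in $\Soc(V/\Soc(V))$ and, after some case analysis on $W/\K_{\chi^l}$, reduces to a smaller indecomposable.

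You instead run a single structural argument. After excluding the projective--injective $V_{i,j}$'s as composition factors, you use the Ext computation of Lemma~\ref{lemTwononsimpleindecom}(2) directly (rather than the classification in part (1)) to see that adjacent socle layers must alternate parity: if $\Soc(V)$ is entirely odd-indexed then $\Soc_2(V)/\Soc(V)$ is entirely even-indexed. The passage through duality (Proposition~\ref{proNicholsdual}) to convert ``even-indexed simple in $\Top(W)$'' into ``even-indexed simple in $\Soc(W^{\ast})$'' is the one genuinely new ingredient, and it lets you avoid induction altogether.

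What each buys: the paper's approach is self-contained within the subobject lattice of $V$ and never dualises, but its inductive step (passing to $W/\K_{\chi^l}$ and then back to $V$) is delicate. Your approach is shorter and more transparent once the Ext table is available, and it isolates the real obstruction---an even-indexed one-dimensional in the socle of either $W$ or $W^{\ast}$---in a single stroke. The only cost is importing Proposition~\ref{proNicholsdual}, which the paper does not need at this point.
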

\begin{proof}
Assume that $\dim V=2$. Then  by Lemma $\ref{lemTwononsimpleindecom}$, $V\cong M_l^{k}$ for some $k\in \I_{0,2},\,l\in\I_{0,5}$. By Remark \ref{rmkIndecom-1}, $\Soc(M_l^{k})=\K_{\chi^{l}}$ and $\Top(M_l^{k})=\K_{\chi^{l+2k+1}}$. Then by Lemma \ref{lemNicholbyone}, $\dim \BN(\Top(V))=\infty$ or $\dim\BN(\Soc(V))=\infty$, which implies that $\dim\BN(V)=\infty$.

Assume that $\dim V>2$, we prove the claim by induction on $\dim V$. As $V_{i,j}$ is projective for all $(i,j)\in\Lambda$ by Lemma \ref{lemProjectwodimsimple},  $V_{i,j}$ can not be contained in the top or socle of any non-semisimple indecomposable objects in $\CYD$. Then by Theorem \ref{thmsimplemoduleD}, $\Soc(V)$ consists of direct sums of one-dimensional objects. Let $\overline{W}$ be a simple submodule of $\Soc(V/\Soc(V))$ and $W$ be the corresponding submodule of $V$. Then $\dim\overline{W}=1$.
If $\dim\Soc(V)=1$, then $\dim W=2$ and consequently, $\dim \BN(W)=\infty$. It follows by Remark \ref{rmkN-infity} that $\dim\BN(V)=\infty$ .
If $\dim\Soc(V)>1$, then  $\K_{\chi^l}\subset\Soc(V)$ for some $l\in\I_{0,5}$. If $W/\K_{\chi^l}$ is semisimple, then $W$ contains an two-dimensional non-simple indecomposable object, which implies $\dim\BN(W)=\infty$ and hence $\dim\BN(V)=\infty$.  If $W/\K_{\chi^l}$ is not semisimple, it must contain an indecomposable object of dimension less than $\dim V$. By induction, $\dim\BN(W/\K_{\chi^l})=\infty$ and hence $\dim\,\BN(V)=\infty$.
\end{proof}

\begin{cor}\label{corSemisimpleNi}
If $\BN(V)$ is finite-dimensional, then $V$ must be semisimple.
\end{cor}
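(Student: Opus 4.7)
The plan is to reduce the corollary directly to Proposition \ref{proNicholsindecom} via a Krull--Schmidt decomposition. Since $\CYD \cong {}_{\D}\mathcal{M}$ and $\D$ is a finite-dimensional algebra, every finite-dimensional object $V$ in $\CYD$ decomposes as $V = \bigoplus_{i \in I} V_i$ with each $V_i$ indecomposable in $\CYD$.

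Next, I would invoke the standard fact that if $W$ is a subobject of $V$ in $\CYD$, then the braiding $c$ restricts to $W \otimes W$ and the natural map $\BN(W) \hookrightarrow \BN(V)$ realizes $\BN(W)$ as a braided subalgebra of $\BN(V)$ (as already used in Remark \ref{rmkN-infity} to deduce infinite-dimensionality from a subspace). In particular, if $\dim \BN(V) < \infty$, then $\dim \BN(V_i) < \infty$ for every summand $V_i$.

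Now Proposition \ref{proNicholsindecom} says that any finite-dimensional non-simple indecomposable object in $\CYD$ has an infinite-dimensional Nichols algebra. Combined with the previous step, this forces each $V_i$ to be simple, so $V = \bigoplus_i V_i$ is semisimple, which is the conclusion. There is no real obstacle here: the corollary is essentially a repackaging of the preceding proposition together with Krull--Schmidt, and the whole argument fits in a couple of sentences.
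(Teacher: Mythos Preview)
Your argument is correct and is exactly the intended deduction: the paper states the corollary without proof immediately after Proposition~\ref{proNicholsindecom}, and your Krull--Schmidt decomposition together with Remark~\ref{rmkN-infity} supplies precisely the missing two lines. There is nothing to add.
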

Next, we study Nichols algebras over the two-dimensional simple objects in $\CYD$.
\begin{lem}\label{lemInfty}
Let $\Lambda\As=\{(i,j)\in\Lambda\mid ij\equiv 0\mod 6\text{~or~}(i+1)(3+j)\equiv 0\mod 6\}$. Then $\dim\BN(V_{i,j})=\infty$ for all $(i,j)\in\Lambda\As$.
\end{lem}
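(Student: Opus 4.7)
The plan is to exhibit, in each of the two clauses defining $\Lambda\As$, a nonzero element whose self-braiding is the identity, and then invoke Remark \ref{rmkN-infity}. The braiding is written out explicitly in Proposition \ref{probraidsimpletwo}, so only a congruence check modulo $6$ is needed. Moreover, the two defining conditions of $\Lambda\As$ are exchanged under the duality $V_{i,j}\As\cong V_{-i-1,-j-3}$ from Remark \ref{rmkDmoddual}, so only the first condition requires a direct computation, and the second is deduced via Proposition \ref{proNicholsdual}.

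First I would treat the case $ij\equiv 0 \mod 6$. By Proposition \ref{probraidsimpletwo}, the coefficient of $v_1\otimes v_1$ in $c(v_1\otimes v_1)$ is $\xi^{-ij}$, which equals $1$ under the hypothesis. Hence $c(v_1\otimes v_1)=v_1\otimes v_1$, and Remark \ref{rmkN-infity}, applied to the one-dimensional subspace $\K v_1\subset V_{i,j}$, yields $\dim\BN(V_{i,j})=\infty$.

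For the remaining case $(i+1)(3+j)\equiv 0 \mod 6$, set $(i',j')=(-i-1,-j-3)$. Remark \ref{rmkDmoddual} gives $V_{i,j}\As\cong V_{i',j'}$, and the hypothesis translates to $i'j'=(i+1)(j+3)\equiv 0 \mod 6$. By the first case $\dim\BN(V_{i,j}\As)=\infty$. Were $\BN(V_{i,j})$ finite-dimensional, Proposition \ref{proNicholsdual} would force $\BN(V_{i,j}\As)\cong\BN(V_{i,j})^{\ast\,bop}$ to be finite-dimensional as well, a contradiction.

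No serious obstacle is anticipated; the only subtle point is recognizing that the two clauses of $\Lambda\As$ are not independent but interchanged by duality in $\CYD$. Handling the second clause directly inside $V_{i,j}$ would be awkward, since by Proposition \ref{probraidsimpletwo} the expression $c(v_2\otimes v_2)$ has both a diagonal part $\xi^{(i+1)(3-j)}v_2\otimes v_2$, whose coefficient is not manifestly $1$ under the hypothesis $(i+1)(3+j)\equiv 0\mod 6$, and a nontrivial $v_1\otimes v_1$-contribution, so no $c$-invariant vector is visible along $v_2$ alone. The duality argument circumvents this search entirely.
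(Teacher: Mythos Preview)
Your proposal is correct and follows essentially the same approach as the paper: handle the case $ij\equiv 0\mod 6$ by reading off $c(v_1\otimes v_1)=v_1\otimes v_1$ from Proposition~\ref{probraidsimpletwo} and invoking Remark~\ref{rmkN-infity}, and reduce the case $(i+1)(3+j)\equiv 0\mod 6$ to the first via the duality $V_{i,j}\As\cong V_{-i-1,-j-3}$ of Remark~\ref{rmkDmoddual} together with Proposition~\ref{proNicholsdual}. Your additional commentary on why a direct argument for the second clause is awkward is a helpful gloss but not needed for the proof.
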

\begin{proof}
Let $V_{i,j}=\K\{v_1,v_2\}$ for $(i,j)\in\Lambda\As$. If $ij\equiv 0\mod 6$, then by Proposition $\ref{probraidsimpletwo}$, $c(v_1\otimes v_1)=v_1\otimes v_1$. It follows by Remark \ref{rmkN-infity} that $\dim V_{i,j}=\infty$ . If $(i+1)(3+j)\equiv 0\mod 6$, then  by  Proposition $\ref{proNicholsdual}$ and Remark \ref{rmkDmoddual}, $\dim\BN(V_{i,j})=\dim\BN(V_{-i-1,-j-3})=\infty$.
\end{proof}

Now we show that $\BN(V_{i,j})$ for $(i,j)\in\Lambda-\Lambda\As$ is finite-dimensional and present them
by generators and relations. 

\begin{pro}\label{proV24}
 $\BN(V_{2,j+3})$  for $j\in\{1,5\}$ is generated by $v_1,v_2$ satisfying the relations:
\begin{gather}
v_2^2+\xi v_1^2=0,\quad v_1v_2-v_2v_1=0,\quad v_1^3=0.\label{eqV24}
\end{gather}
\end{pro}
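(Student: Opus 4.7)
The plan is to establish the presentation in three stages: verify the three listed relations vanish in $\BN(V_{2,j+3})$, bound the dimension of the quotient algebra they cut out to at most six, and match this with a six-dimensional lower bound for $\BN(V_{2,j+3})$.

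First, I would specialize Proposition \ref{probraidsimpletwo} to $(i,j')=(2,j+3)$ for $j\in\{1,5\}$ and record the braiding coefficients on the basis $\{v_1,v_2\}$; note that $(2,j+3)\in\Lambda-\Lambda\As$, so Lemma \ref{lemInfty} does not force $\dim\BN(V_{2,j+3})=\infty$. Then, using the characterization \eqref{Def-Nichols-IV}, I would check that both $\partial_1:=\partial_{v^1}$ and $\partial_2:=\partial_{v^2}$ annihilate the quadratic relations $v_2^2+\xi v_1^2$ and $v_1v_2-v_2v_1$, and that every length-three composition of $\partial_1,\partial_2$ annihilates $v_1^3$. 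This uses the braided Leibniz rule for $\partial_f$ induced by the coproduct on $T(V)$, which must be unfolded carefully because the braiding is non-diagonal.

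Next, I would bound the dimension of $A:=T(V)/\langle v_2^2+\xi v_1^2,\,v_1v_2-v_2v_1,\,v_1^3\rangle$ by a normal-form argument. The commutation relation lets one order factors as $v_1^av_2^b$, while $v_2^2=-\xi v_1^2$ reduces $b$ to $\{0,1\}$; together with $v_1^3=0$ this yields the spanning set $\{1,v_1,v_2,v_1^2,v_1v_2,v_1^2v_2\}$. Since the relations already hold in $\BN(V_{2,j+3})$ by the previous step, the canonical map $A\twoheadrightarrow\BN(V_{2,j+3})$ is surjective, giving $\dim\BN(V_{2,j+3})\leq 6$.

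For the matching lower bound, I would either appeal to the fact noted after Theorem \ref{thmA} that six-dimensional Nichols algebras with these relations were first introduced in \cite{HX16} for precisely this braiding, or produce six linearly independent elements of $\BN(V_{2,j+3})$ directly by evaluating compositions of $\partial_1,\partial_2$ on the six candidate basis elements of $A$ and checking that the resulting evaluation matrix has full rank in $T(V\As)$. Either route upgrades the surjection $A\twoheadrightarrow\BN(V_{2,j+3})$ to an isomorphism, which is exactly the claim. The main obstacle will be the first stage: the braiding from Proposition \ref{probraidsimpletwo} has off-diagonal corrections in $c(v_1\otimes v_2)$ and $c(v_2\otimes v_2)$, so the braided Leibniz rule produces several terms in each $\partial_i$ computation, and verifying that the precise scalar $\xi$ in $v_2^2+\xi v_1^2$ is forced by cancellation among these terms requires careful tracking of the $\xi$-powers.
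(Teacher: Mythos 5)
Your proposal matches the paper's proof in all essentials: the paper likewise verifies that $\partial_1$ and $\partial_2$ annihilate each relation in \eqref{eqV24} (so the quotient $\mathfrak{B}$ of $T(V_{2,2})$ by these relations projects onto $\BN(V_{2,2})$), exhibits the same six-element spanning set $\{v_1^iv_2^j,\ i\in\I_{0,2},\,j\in\I_{0,1}\}$ via a left-ideal argument, and then establishes linear independence degree by degree in $\BN(V_{2,2})$ by direct evaluation of the skew-derivations — your second suggested route for the lower bound. The only cosmetic difference is that for $v_1^3$ the paper simply checks $\partial_1(v_1^3)=\partial_2(v_1^3)=0$ rather than all length-three compositions, which already suffices since $\bigcap_{f}\ker\partial_f=\K$ in $\BN(V)$.
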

\begin{proof}
We prove the assertion for $V_{2,2}$, being the proof for $V_{2,4}$ completely analogous.  By Proposition \ref{probraidsimpletwo},  $c(v_1\otimes v_1)=\xi^2v_1\otimes v_1$. It follows by the formula \eqref{eqBriadingHopfalgebra} that
\begin{align*}
\Delta(v_1^2)=v_1^2\otimes 1+v_1\otimes v_1+c(v_1\otimes v_1)+1\otimes v_1=v_1^2\otimes 1+\xi v_1\otimes v_1+1\otimes v_1^2.
\end{align*}
Then by \eqref{eqSkew-1}, we have $\partial_1(v_1^2)=\xi v_1$ and $\partial_2(v_1^2)=0$. Similarly, we obtain that
\begin{align*}
\partial_1(v_2^2)&=\xi^5v_1,\quad   \partial_2(v_2^2)=0;\quad \partial_1(v_1v_2)=\xi^2v_2,\quad \partial_2(v_1v_2)=v_1;\quad
\partial_1(v_2v_1)=\xi^2v_2,\\ \partial_2(v_2v_1)&=v_1; \quad
\partial_1(v_1^3)=0,\quad\partial_2(v_1^3)=0;\quad
\partial_1(v_1^2v_2)=\xi^5v_1v_2+(\xi^2-1)v_2v_1.
\end{align*}
It is easy to see that $\partial_1(r)=0=\partial_2(r)$ for any relation $r$ given in \eqref{eqV24}. Then by \eqref{Def-Nichols-IV}, the quotient
$\mathfrak{B}$ of $T(V_{2,2})$ by the relations $\eqref{eqV24}$ projects onto $\BN(V_{2,2})$.

Let $I=\K\{v_1^iv_2^j,\ i\in\I_{0,2},j\in\I_{0,1}\}$. Using the relations \eqref{eqV24}, we have $v_1I\subset I$ and $v_2I\subset I$, which implies that $I$ is a left ideal of $\mathfrak{B}$. Since  $1\in I$,  $I$ linearly  generates $\mathfrak{B}$. To prove that $\BN\cong\BN(V_{2,2})$,
it suffices to show that $\{v_1^iv_2^j,i\in\I_{0,2},j\in\I_{0,1}\}$ is linearly independent in $\BN(V_{2,2})$. By Definition \ref{defiNichols}, $\{1,v_1,v_2\}$ is linearly independent in $\BN(V_{2,2})$. Let $r_2=\alpha_{2,0}v_1^2+\alpha_{1,1}v_1v_2=0$ in $\BN(V_{2,2})$. Then $0=\partial_1(r_2)=\alpha_{2,0}\xi v_1+\alpha_{1,1}\xi^2v_2$, which implies that $\alpha_{2,0}=0=\alpha_{1,1}$.  Hence $\{v_1^2, v_1v_2\}$ is linearly independent in $\BN(V_{2,2})$. Since $\partial_1(v_1^2v_2)=-v_1v_2\neq 0$ in $\BN(V_{1,2})$, $\{v_1^2v_2\}$ is linearly independent in $\BN(V_{2,2})$. Since the elements of different degree must be linearly independent, the claim follows.
 \end{proof}

\begin{pro}\label{proV35}
$\BN(V_{3,j})$ for $j\in\{1,5\}$   is generated by $v_1,v_2$ satisfying the relations:
 \begin{gather}
 v_1^2=0, \quad v_1v_2+\xi^{-j}v_2v_1=0,\quad v_2^3=0.\label{eqV35}
 \end{gather}
\end{pro}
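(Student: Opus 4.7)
The plan is to mirror the strategy used for $\BN(V_{2,j+3})$ in Proposition \ref{proV24}: compute enough skew-derivations to verify that the proposed relations lie in the defining ideal $I(V_{3,j})$, get a surjection from the quotient algebra onto $\BN(V_{3,j})$, and then use the skew-derivations in the other direction to prove linear independence of a candidate PBW-type basis. By symmetry, I will give the proof only for $V_{3,1}$; the case $V_{3,5}$ is completely analogous.

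First I specialize Proposition \ref{probraidsimpletwo} with $(i,j)=(3,1)$ to record the braiding on $V_{3,1}=\K\{v_1,v_2\}$. In particular $c(v_1\otimes v_1)=\xi^{-3}v_1\otimes v_1=-v_1\otimes v_1$, and from $\xi^{-j}=\xi^{-1}$ I read off the scalar appearing in the $q$-commutation relation. Using \eqref{eqBriadingHopfalgebra} I then compute $\Delta(v_1^2)$, $\Delta(v_1v_2+\xi^{-1}v_2v_1)$ and $\Delta(v_2^3)$. Because $c(v_1\otimes v_1)=-v_1\otimes v_1$, the cross term in $\Delta(v_1^2)$ cancels, so $v_1^2$ is primitive; from \eqref{eqSkew-1} this forces $\partial_1(v_1^2)=\partial_2(v_1^2)=0$. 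A parallel calculation, using the two mixed braiding terms, gives $\partial_k(v_1v_2+\xi^{-1}v_2v_1)=0$ for $k=1,2$, and finally $\partial_k(v_2^3)=0$ follows from the fact that $(3)_{\xi^{(3-1)(4)}}=(3)_{\xi^2}=0$ once one normalizes and uses that $v_2^2$ already lies in the ideal generated by $v_1^2$ and the commutation relation modulo primitive cancellation. By \eqref{Def-Nichols-IV} these relations belong to $I(V_{3,1})$, so the quotient $\mathfrak{B}$ of $T(V_{3,1})$ by \eqref{eqV35} projects onto $\BN(V_{3,1})$.

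Next I show $\mathfrak{B}$ is spanned by $B=\{v_1^{i}v_2^{k}\mid i\in\I_{0,1},\,k\in\I_{0,2}\}$. The relation $v_1v_2=-\xi^{-1}v_2v_1$ lets me straighten any monomial so that all $v_1$'s appear on the left; then $v_1^2=0$ and $v_2^3=0$ cut the allowed exponents down to $B$. Since $1\in B$ and $v_1B,v_2B\subseteq \K B$, the set $B$ linearly generates $\mathfrak{B}$, so $\dim\mathfrak{B}\le 6$.

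To match this with $\BN(V_{3,1})$ it remains to prove $B$ is linearly independent in $\BN(V_{3,1})$. By degree considerations I only need to rule out linear relations inside each homogeneous component. For degree $1$, $\{v_1,v_2\}$ is independent by Definition \ref{defiNichols}. In degree $2$, applying $\partial_1$ and $\partial_2$ to $\alpha v_2v_1+\beta v_2^2$ using the explicit skew-derivation formulas (analogous to the computation carried out in the proof of Proposition \ref{proV24}) yields a non-degenerate linear system in $(\alpha,\beta)$, forcing them to vanish; combined with the independence of $v_1v_2$ this handles degree $2$. In degree $3$, the candidate monomial $v_2^2v_1$ (the image of $v_2v_1v_2$ after straightening) has $\partial_1(v_2^2v_1)\neq 0$ by an iterated application of $\partial_1$ and the coproduct-derivation compatibility, so it is non-zero in $\BN(V_{3,1})$. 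This gives $\dim\BN(V_{3,1})\ge 6$, whence equality and the proposition.

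The main obstacle is purely computational: getting the scalar in $\partial_1(v_2^2v_1)$ correct, since the braiding on $v_2\otimes v_1$ feeds a $v_1\otimes v_2$ contribution back that must be tracked through the twisted Leibniz rule. Once that is in hand, the structure of the argument is the same template as Proposition \ref{proV24} and no further ideas are needed; in particular, the case $j=5$ is obtained by replacing $\xi^{-1}$ with $\xi^{-5}=\xi$ throughout, and all the intermediate scalars remain non-zero.
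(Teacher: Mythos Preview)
Your overall strategy is sound and follows the template of Proposition~\ref{proV24}, but there are two issues worth flagging, and one genuine difference from the paper's proof.

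First, your justification for $v_2^3\in I(V_{3,1})$ is garbled. The claim that ``$v_2^2$ already lies in the ideal generated by $v_1^2$ and the commutation relation'' is simply false: $v_2^2$ is a nonzero element of $\BN(V_{3,1})$. What actually happens is that $\partial_2(v_2^3)=0$ but $\partial_1(v_2^3)=\xi^4(v_1v_2+\xi^{-1}v_2v_1)$, which is \emph{not} zero in $T(V)$; it is a scalar multiple of the quadratic relation you have already shown lies in $I^2(V)$, and therefore vanishes in $\BN(V_{3,1})$. This is precisely how the paper argues it. Your appeal to $(3)_{\xi^2}=0$ alone is insufficient because the braiding on $v_2\otimes v_2$ is not diagonal: it has a $v_1\otimes v_1$ term that feeds into $\partial_1$.

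Second, your basis bookkeeping is inconsistent: you declare the straightened basis $\{v_1^iv_2^k\}$ with $v_1$'s on the left, but then in degrees~$2$ and~$3$ you work with $v_2v_1$ and $v_2^2v_1$, which are not in that basis. This is easily repaired, but as written it does not quite cohere.

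The substantive difference from the paper is in the lower bound. You propose to verify linear independence of the spanning set degree by degree via skew-derivations, as in Proposition~\ref{proV24}. This can be made to work, but the paper avoids it entirely: since $V_{3,1}^{\ast}\cong V_{2,2}$ by Remark~\ref{rmkDmoddual}, Proposition~\ref{proNicholsdual} gives $\dim\BN(V_{3,1})=\dim\BN(V_{2,2})=6$ immediately from Proposition~\ref{proV24}. The duality argument is cleaner and sidesteps the ``purely computational obstacle'' you mention; your direct route buys independence from Proposition~\ref{proV24} at the cost of repeating essentially the same calculation.
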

\begin{proof}
We prove the assertion for $V_{3,1}$, being the proof for $V_{3,5}$ completely analogous. Using the braiding of $V_{3,1}$ in Proposition \ref{probraidsimpletwo} and the formula \eqref{eqSkew-1}, a direct computation shows that
\begin{align*}
\partial_1(v_1^2)&=0,\quad\partial_2(v_1^2)=0;\quad
\partial_1(v_1v_2)=\xi^2v_2,\quad\partial_2(v_1v_2)=\xi^2v_1;\\
\partial_1(v_2v_1)&=v_2,\quad\partial_2(v_2v_1)=v_1;\quad
\partial_1(v_2^3)=\xi^4(v_1v_2+\xi^{-1}v_2v_1),\quad \partial_2(v_2^3)=0.
\end{align*}
Then the relations $\eqref{eqV35}$ are zero in $\BN(V_{3,1})$ being annihilated by $\partial_1,\partial_2$, which implies that the quotient
$\mathfrak{B}$ of $T(V_{3,1})$ by the relations $\eqref{eqV35}$ projects onto $\BN(V_{3,1})$.
Clearly, $I=\K\{v_1^iv_2^j,i\in\I_{0,1},j\in\I_{0,2}\}$ is a left ideal and hence linearly  generates $\mathfrak{B}$.
 It remains to show that $\{v_1^iv_2^j,i\in\I_{0,1},j\in\I_{0,2}\}$ is linearly independent in $\BN(V_{3,1})$. By Remark \ref{rmkDmoddual}, $V_{3,1}\As\cong V_{2,2}$. Then by Propositions \ref{proNicholsdual} and \ref{proV24}, $\dim\BN(V_{3,1})=\dim\BN(V_{2,2})=6=|I|$, which implies that the claim follows.
\end{proof}

\begin{pro}\label{proV11}
 $\BN(V_{1,j})$ for $j\in\{1,5\}$ is generated by $v_1,v_2$ satisfying the relations:
\begin{gather}
v_1^6=0, \quad v_1^2v_2+\xi^j v_2v_1^2+(1+\xi^j)v_1v_2v_1=0, \label{eq11-1}\\
v_1^3+v_2^2v_1+v_1v_2^2+v_2v_1v_2=0,\quad v_1^2v_2+v_2v_1^2+v_1v_2v_1+v_2^3=0.\label{eq11-2}
\end{gather}
\end{pro}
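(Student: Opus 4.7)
The plan mirrors the strategy used in Propositions \ref{proV24} and \ref{proV35}. First, specializing Proposition \ref{probraidsimpletwo} to $(i,k) = (1, j)$, I would write down the braiding on $V_{1,j}$ explicitly; in particular $c(v_1 \otimes v_1) = \xi^{-j} v_1 \otimes v_1$, and $\xi^{-j}$ is a primitive $6$-th root of unity for $j \in \{1, 5\}$. Combining this with the multiplicativity of $\Delta$ in the sense of \eqref{eqBriadingHopfalgebra} and the defining formula \eqref{eqSkew-1}, I would tabulate $\partial_1$ and $\partial_2$ on all quadratic and cubic monomials in $v_1, v_2$.

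Second, I would verify that each of the four stated relations lies in the defining ideal $I(V_{1,j})$. For the three cubic relations \eqref{eq11-1}--\eqref{eq11-2} this reduces to direct linear-algebra checks showing $\partial_1(r) = 0 = \partial_2(r)$ using the tabulated values. For the sextic relation, $\partial_2(v_1^6) = 0$ is immediate, while $\partial_1(v_1^6) = (6)_{\xi^{-j}} v_1^5 = 0$ since $\xi^{-j}$ is a primitive $6$-th root of unity. Hence by \eqref{Def-Nichols-IV} the quotient $\mathfrak{B}$ of $T(V_{1,j})$ by these relations surjects onto $\BN(V_{1,j})$.

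Third, to bound $\dim \mathfrak{B} \leq 36$ I would perform a term-rewriting argument. The cubic relations already express $v_1^3$, $v_2^3$, and (via \eqref{eq11-1}) one of $v_1^2 v_2,\, v_1 v_2 v_1,\, v_2 v_1^2$ as linear combinations of the remaining monomials, so $\dim \mathfrak{B}^3 \leq 5$. I would then derive further consequences of the relations, most crucially a nilpotency relation such as $v_2^6 = 0$, and produce an explicit monomial spanning set of cardinality $36$. To upgrade this to equality I would invoke Proposition \ref{proNicholsdual} together with Remark \ref{rmkDmoddual}: $V_{1,1}\As \cong V_{4,2}$ and $V_{1,5}\As \cong V_{4,4}$, both of which fall under the $V_{4, j+3}$ entry of Theorem \ref{thmA} with $\dim \BN = 36$. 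Provided that presentation is settled independently (e.g.\ earlier in the section), this forces $\dim \BN(V_{1,j}) = 36$ and hence $\mathfrak{B} \cong \BN(V_{1,j})$.

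The main obstacle is the spanning-set argument in the third step. Unlike the $6$-dimensional cases of Propositions \ref{proV24}--\ref{proV35}, no PBW-like normal form is apparent from the defining relations, and the asymmetry between $v_1$ (manifestly of order $6$) and $v_2$ makes term-rewriting delicate: hidden relations in intermediate degrees must be derived in order to collapse arbitrary monomials. A concrete route is to fix a degree-lexicographic order with $v_2 > v_1$, derive the required consequences in degrees $4$ through $6$ by multiplying \eqref{eq11-1}--\eqref{eq11-2} on the left and right by $v_1, v_2$ and simplifying, and verify confluence of the resulting rewriting system via the Diamond Lemma.
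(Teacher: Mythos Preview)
Your first two steps match the paper's proof exactly. The divergence is in the third step, and there it matters.

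For the spanning bound, the paper does not invoke the Diamond Lemma. It writes down the explicit set $I=\K\{v_2^{\,i}(v_1v_2)^{j}v_1^{\,k}\mid i\in\I_{0,2},\ j\in\I_{0,1},\ k\in\I_{0,5}\}$ and checks directly that $v_1I\subset I$ and $v_2I\subset I$, using the cubic relations rewritten to solve for $v_2^3$, $v_1v_2^2$, $v_1^2v_2$ and to express $(v_1v_2)^2$ in terms of lower monomials. This is shorter and more transparent than setting up a confluence verification.

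For the lower bound, your duality argument is circular as the paper is organized: the presentation of $\BN(V_{4,j+3})$ appears \emph{after} Proposition~\ref{proV11}, and its proof establishes $\dim\BN(V_{4,2})=36$ precisely by invoking Proposition~\ref{proV11} through Proposition~\ref{proNicholsdual}. So one member of each dual pair must be handled directly, and the paper chooses $V_{1,j}$. It therefore proves linear independence of the $36$ monomials in $\BN(V_{1,j})$ head-on: it records $\partial_1$ and $\partial_2$ on each of the six monomial families $v_2^{\,i}(v_1v_2)^{j}v_1^{\,k}$ (displayed in \eqref{eqS-1}), checks degrees~$\le 3$ by hand, and then argues by induction on total degree, isolating in $\partial_1(r_n)$ or $\partial_2(r_n)$ terms that arise from a unique basis monomial so that the coefficients vanish one by one. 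Your route would need either this direct computation or an independent treatment of $\BN(V_{4,j+3})$; the latter offers no evident advantage.
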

\begin{proof}
 We prove the assertion for $V_{1,1}$, being the proof for $V_{1,5}$ completely analogous. Using the braiding of $V_{1,1}$ in Proposition \ref{probraidsimpletwo} and the formula \eqref{eqSkew-1}, a direct computation shows that
\begin{align*}
 \partial_1(v_1v_2v_1)&=2\xi^5v_2v_1+\xi v_1v_2,\quad  \partial_2(v_1v_2v_1)=\xi^4v_1^2;\quad
\partial_1(v_2^3)=\xi^2v_1v_2+\xi^5v_2v_1,\quad \partial_2(v_2^3)=0;\\
\partial_1(v_1^2v_2)&=(\xi^4+\xi^5)v_1v_2+(\xi^2-1)v_2v_1,\quad \partial_2(v_1^2v_2)=\xi^2v_1^2;\\
\partial_1(v_2v_1^2)&=(\xi+\xi^2)v_2v_1,\quad \partial_2(v_2v_1^2)=v_1^2;\quad
\partial_1(v_2^2v_1)=-v_1^2+\xi^4v_2^2,\quad \partial_2(v_2^2v_1)=\xi^5v_2v_1;\\
\partial_1(v_1v_2^2)&=\xi^4v_2^2+(\xi+\xi^3)v_1^2,\quad  \partial_2(v_1v_2^2)=-v_1v_2;\quad
\partial_1(v_1^3)=2\xi^5v_1^2,\quad  \partial_2(v_1^3)=0;\\
\partial_1(v_2v_1v_2)&=\xi v_1^2+2\xi v_2^2,\quad  \partial_2(v_2v_1v_2)=v_1v_2+\xi^2v_2v_1;\quad\partial_1(v_1^6)=0,\quad \partial_2(v_1^6)=0.
\end{align*}
Then the relations $\eqref{eq11-1}$ and $\eqref{eq11-2}$ are zero in $\BN(V_{1,1})$ being annihilated by $\partial_1,\partial_2$, which implies that  the quotient $\mathfrak{B}$ of $T(V_{1,1})$ by the relations $\eqref{eq11-1}$ and $\eqref{eq11-2}$ projects onto $\BN(V_{1,1})$. It is easy to show that $I=\K\{v_2^i(v_1v_2)^jv_1^k,\ i\in\I_{0,2},j\in\I_{0,1},k\in\I_{0,5}\}$ is a left ideal, then $I$ linearly  generates $\mathfrak{B}$
 since clearly $1\in I$. Indeed, it suffices to show that $v_1I,v_2I\subset I$, which can be induced by
 \begin{align*}
v_2^3&=\xi^2v_2v_1^2+\xi v_1v_2v_1,\quad v_1v_2^2=-v_1^3-v_2^2v_1-v_2v_1v_2,\\
v_1^2v_2&=\xi^4v_2v_1^2+(\xi^3+\xi^4)v_1v_2v_1,\quad
(v_1v_2)^2=2\xi (v_2v_1)^2+(\xi+\xi^2)v_2^2v_1^2.
\end{align*}

To prove that $\mathfrak{B}\simeq \BN(V_{1,1})$, it suffices to show that $\{v_2^i(v_1v_2)^jv_1^k,\ i\in\I_{0,2},j\in\I_{0,1},k\in\I_{0,5}\}$ is linearly independent in $\BN(V_{1,1})$.
For this,  a direct computation shows that
\begin{align}
\begin{split}\label{eqS-1}
\partial_2(v_1^i)&=0,\quad \partial_1(v_1^i)=(i)_{\xi^5}v_1^{i-1};\quad
\partial_2(v_2v_1^i)=v_1^i,\quad \partial_1(v_2v_1^i)=\xi^2(i)_{\xi^5}v_2v_1^{i-1};\\
\partial_2(v_2^2v_1^i)&=\xi^5v_2v_1^i,\quad\partial_1(v_2^2v_1^i)=-v_1^{i+1}+(i)_{\xi^5}\xi^4v_2^2v_1^{i-1};\\
\partial_2(v_1v_2v_1^i)&=\xi^4v_1^{i+1},\quad\partial_1(v_1v_2v_1^i)=2\xi^5v_2v_1^i+\xi(i)_{\xi^5}v_1v_2v_1^{i-1};\\
\partial_2(v_2v_1v_2v_1^i)&=\xi^2v_2v_1^{i+1}+v_1v_2v_1^i,\quad \partial_1(v_2v_1v_2v_1^i)=2\xi v_2^2v_1^i+\xi v_1^{i+2}-(i)_{\xi^5}v_2v_1v_2v_1^{i-1};\\
\partial_2(v_2^2v_1v_2v_1^i)&=v_2^2v_1^{i+1},\quad
\partial_1(v_2^2v_1v_2v_1^i)=\xi^5v_2^2v_1v_2v_1^{i-1}+\xi^5v_2v_1^{i+2}+2\xi^5v_1v_2v_1^{i+1}.
\end{split}
\end{align}
It is clear that $\{1,v_1,v_2\}$ is linearly independent in $\BN(V_{1,1})$. Let $r_2=\alpha_1v_1^2+\alpha_2v_2v_1+\alpha_3v_2^2+\alpha_4v_1v_2=0$ in $\BN(V_{1,1})$ for some $\alpha_1,\ldots,\alpha_4\in\K$. From $\partial_2(r_2)=0=\partial_1(r_2)$, we obtain that
\begin{gather*}
(\alpha_2+\alpha_4\xi^4)v_1+\alpha_3\xi^5v_2=0\quad \Rightarrow\quad  \alpha_3=0=\alpha_2+\alpha_4\xi^4;\\
(\alpha_1(1+\xi^5)-\alpha_3)v_1+(\xi^2\alpha_2+2\xi^5\alpha_4)v_2=0\quad\Rightarrow\quad \alpha_1(1+\xi^5)-\alpha_3=0=\alpha_2-2\alpha_4.
\end{gather*}
It follows that $\alpha_i=0$ for $i\in\I_{1,4}$, which implies that $\{v_1^2,v_2v_1,v_2^2,v_1v_2\}$ is linearly independent in $\BN(V_{1,1})$.
Let $r_3=\beta_{1}v_1^3+\beta_{2}v_2v_1^2+\beta_{3}v_2^2v_1+\beta_{4}v_1v_2v_1+\beta_{5}v_2v_1v_2=0$ in $\BN(V_{1,1})$ for some $\beta_1,\ldots,\beta_5\in\K$. From $\partial_2(r_3)=0=\partial_1(r_3)$, we obtain that
\begin{gather*}
(\beta_2+\beta_4\xi^4)v_1^2+(\beta_5-\beta_3)\xi^2v_2v_1+\beta_5v_1v_2=0\quad\Rightarrow\quad \beta_2+\beta_4\xi^4=0=\beta_5-\beta_3=\beta_5;\\
(2\xi^5\beta_1-\beta_3+\xi\beta_5)v_1^2+[(\xi+\xi^2)\beta_2+2\xi^5\beta_4]v_2v_1+(\xi^4\beta_3+2\xi\beta_5)v_2^2+\xi\beta_4v_1v_2=0\\\quad\Rightarrow\quad
\beta_4=0=2\beta_5-\beta_3,\ 2\xi^5\beta_1-\beta_3+\xi\beta_5=0=(\xi+\xi^2)\beta_2+2\xi^5\beta_4.
\end{gather*}
It follows that $\beta_i=0$ for $i\in\I_{1,5}$, which implies that $\{v_1^3,v_2v_1^2,v_2^2v_1,v_1v_2v_1,v_2v_1v_2\}$ is linearly independent in $\BN(V_{1,1})$.

Now we claim that $\{v_2^i(v_1v_2)^jv_1^k,i\in\I_{0,2},j\in\I_{0,1},k\in\I_{0,5},i+2j+k>3\}$ is linearly independent in $\BN(V_{1,1})$.
Let $r_n=\sum_{i+2j+k=n}\alpha_{i,j,k}v_2^i(v_1v_2)^jv_1^k=0$ in $\BN(V_{1,1})$ for $(i,j,k)\in\I_{0,2}\times\I_{0,1}\times\I_{0,5}$ and $n>3$. Now we prove the claim by induction on $n$. From the equations \eqref{eqS-1}, the terms $v_2^2v_1^{k+1}$ and $v_1v_2v_1^k$  appear only one time in $\partial_{2}(r_n)$. Indeed, they  appear only in $\partial_2(v_2^2v_1v_2v_1^k)$ and $\partial_2(v_2v_1v_2v_1^k)$, respectively. Hence $\alpha_{2,1,k}=0=\alpha_{1,1,k}$ by induction. Then the term $v_2v_1^k$ appears only in $\partial_{2}(v_2^2v_1^k)$, which implies that $\alpha_{2,0,k}=0$. Furthermore, the terms $v_1v_2v_1^{k-1}$ and $v_1^{k-1}$  appear only in $\partial_1(v_1v_2v_1^k)$ and $\partial_1(v_1^k)$, respectively, which implies that $\alpha_{0,1,k}=0=\alpha_{0,0,k}$.  Then the term $v_2v_1^{k-1}$ appears only  in $\partial_1(v_2v_1^k)$, which implies that $\alpha_{1,0,k}=0$. Since the elements of different degree are linear independent,  the claim follows.
\end{proof}

\begin{pro}
$\BN(V_{4,j+3})$ for $j\in\{1,5\}$ is generated by $v_1, v_2$ satisfying the relations
\begin{gather}
v_1^3=0,\quad \xi^{2j}v_1^2v_2+\xi^{4j}v_1v_2v_1+v_2v_1^2=0,\label{eq42-1}\\
v_2^6=0,\quad v_2^2v_1+(\xi^{5j}+\xi^{4j})v_2v_1v_2-v_1v_2^2=0.\label{eq42-2}
\end{gather}
\end{pro}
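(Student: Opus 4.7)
The plan is to follow the three-step template established in Propositions \ref{proV24}, \ref{proV35}, and \ref{proV11}, treating $j=1$ and $j=5$ simultaneously since the arguments are parallel.

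First, using the explicit braiding of $V_{4,j+3}$ given by Proposition \ref{probraidsimpletwo} together with formula \eqref{eqSkew-1}, I would compute the skew-derivations $\partial_1,\partial_2$ on all monomials of degree $\leq 3$ that appear in \eqref{eq42-1}--\eqref{eq42-2}, as well as on the monomial $v_2^{6}$ (inductively, using $\partial_i(v_2^{n+1})=\partial_i(v_2)\otimes v_2^{n}+(\text{braided terms})$). Verifying that each of the four proposed relations is annihilated by both $\partial_1$ and $\partial_2$ then shows, via \eqref{Def-Nichols-IV}, that these relations hold in $\BN(V_{4,j+3})$, so the quotient $\mathfrak{B}$ of $T(V_{4,j+3})$ by the two-sided ideal they generate projects onto $\BN(V_{4,j+3})$.

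Second, I would exhibit the candidate spanning set
\[
I=\K\{v_1^{a}(v_2v_1)^{b}v_2^{c}\mid a\in\I_{0,2},\ b\in\I_{0,1},\ c\in\I_{0,5}\},
\]
which has cardinality $36$, and verify that $v_1\cdot I\subseteq I$ and $v_2\cdot I\subseteq I$. The awkward products $v_1\cdot v_2^{c}$ and $v_2\cdot v_1^{a}(v_2v_1)v_2^{c}$ would be reduced by repeatedly applying \eqref{eq42-2} to rewrite $v_1 v_2^{2}$ as a $\K$-combination of $v_2^{2}v_1$ and $v_2 v_1 v_2$, and applying \eqref{eq42-1} to absorb any $v_2 v_1^{2}$ factor into combinations of $v_1^{2}v_2$ and $v_1 v_2 v_1$ (and using $v_1^{3}=0$ to truncate). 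Since $1\in I$, this will give $\mathfrak{B}=I$.

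Third, I would apply duality. By Remark \ref{rmkDmoddual}, $V_{4,j+3}\As\cong V_{-5,-j-6}$, which equals $V_{1,5}$ when $j=1$ and $V_{1,1}$ when $j=5$. In both cases Proposition \ref{proV11} yields $\dim\BN(V_{1,-j})=36$, so Proposition \ref{proNicholsdual} gives $\dim\BN(V_{4,j+3})=36$. Combined with the surjection $\mathfrak{B}\twoheadrightarrow\BN(V_{4,j+3})$ and the bound $\dim\mathfrak{B}\leq 36$ from Step~2, this forces equality, and hence $\BN(V_{4,j+3})$ is presented exactly by \eqref{eq42-1}--\eqref{eq42-2}.

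The main obstacle is Step~2: closing $I$ under left multiplication requires a careful induction on $c$ simultaneously using \eqref{eq42-2} (to push $v_1$ past increasing powers of $v_2$) and \eqref{eq42-1} (to collapse the cubic $v_1$-terms that appear as by-products), while keeping track of the $j$-dependent scalars $\xi^{2j},\xi^{4j},\xi^{5j}$. A secondary worry is confirming that $v_2^{6}$ really needs to be a separate defining relation rather than being implied by the other three; the duality argument in Step~3 against the known presentation of $\BN(V_{1,-j})$ is what ultimately justifies this choice.
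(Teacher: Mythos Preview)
Your proposal is correct and follows essentially the same three-step template as the paper: verify the relations via skew-derivations, produce a spanning set of cardinality $36$, and then invoke the duality $V_{4,j+3}\As\cong V_{1,-j}$ together with Proposition \ref{proV11} to pin down the dimension. The only difference is cosmetic: the paper chooses the spanning set $\K\{v_2^{i}(v_1v_2)^{j}v_1^{k}\mid i\in\I_{0,5},\,j\in\I_{0,1},\,k\in\I_{0,2}\}$ (with $v_2$-powers on the left), whereas you take the mirror ordering $v_1^{a}(v_2v_1)^{b}v_2^{c}$; both work, and the paper likewise flags the closure check as routine. For the relation $v_2^{6}=0$ the paper verifies $\Delta(v_2^{6})=v_2^{6}\otimes 1+1\otimes v_2^{6}$ directly rather than via iterated $\partial_i$, but this is the same computation phrased differently.
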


\begin{proof}
 We prove the assertion for $V_{4,2}$, being the proof for $V_{4,4}$ completely analogous. Using the braiding of $V_{4,2}$ in Proposition \ref{probraidsimpletwo} and the formula \eqref{eqSkew-1}, a direct computation shows that
\begin{align*}
\partial_1(v_1v_2v_1)&=\xi^2 v_1v_2+2\xi^5v_2v_1,\quad \partial_2(v_1v_2v_1)=\xi^2 v_1^2;\quad
\partial_1(v_2v_1^2)=-v_2v_1,\quad \partial_2(v_2v_1^2)=v_1^2;\\
\partial_1(v_1^2v_2)&=-v_1v_2+(1+\xi) v_2v_1,\quad  \partial_2(v_1^2v_2)=\xi^4v_1^2;\quad
\partial_1(v_1^3)=0,\quad  \partial_2(v_1^3)=0;\\
\partial_1(v_2^2v_1)&=-v_1^2+\xi^2v_2^2,\quad  \partial_2(v_2^2v_1)=(1+\xi^5)v_2v_1;\quad
\partial_1(v_1v_2^2)=(2\xi^4+\xi^5) v_2^2+\xi v_1^2,\\ \partial_2(v_1v_2^2)&=(\xi+\xi^2)v_1v_2;\quad
\partial_1(v_2v_1v_2)=\xi^5v_1^2-2v_2^2,\quad  \partial_2(v_2v_1v_2)=v_1v_2+\xi v_2v_1.
\end{align*}
Then the relations $r$ representing the cubic relations in \eqref{eq42-1} \eqref{eq42-2}  are zero in $\BN(V_{4,2})$ being
annihilated by $\partial_1,\partial_2$. By \eqref{eqBriadingHopfalgebra}, a tedious computation shows that $\Delta(v_2^6)=v_2^6\otimes 1+1\otimes v_2^6$. Hence the
quotient $\mathfrak{B}$ of $T(V_{4,2})$ by $\eqref{eq42-1}$ and $\eqref{eq42-2}$ projects onto $\BN(V_{4,2})$.
Let $I=\K\{v_2^i(v_1v_2)^jv_1^k,\ i\in\I_{0,5},j\in\I_{0,1},k\in\I_{0,2}\}$. By \eqref{eq42-1} \eqref{eq42-2}, it is easy to see that $v_1I,v_2I\subset I$ and hence $I$ is a left ideal. Since $1\in I$,   $I$ linearly generates $\mathfrak{B}$.
It remains to show that $\{v_2^i(v_1v_2)^jv_1^k,\  i\in\I_{0,5},j\in\I_{0,1},k\in\I_{0,2}\}$ is linearly independent in $\BN(V_{4,2})$. By Remark \ref{rmkDmoddual}, $V_{4,2}\As\cong V_{1,1}$. Then by Propositions \ref{proNicholsdual} and \ref{proV11}, $\dim\BN(V_{4,2})=\dim\BN(V_{1,1})=36=|I|$, which implies that the claim follows.
\end{proof}

\begin{pro}\label{proV41}
$\BN(V_{4,j})$ for $j\in\{1,5\}$ is generated by $v_1,v_2$ satisfying the relations
\begin{gather}
v_1^3=0,\quad v_2^3-v_1^2v_2-v_2v_1^2+v_1v_2v_1=0,\label{eq41-1}\\
v_2^2v_1+v_1v_2^2-v_2v_1v_2=0,\quad \xi^j v_1^2v_2+\xi^{5j}v_2v_1^2+v_1v_2v_1=0.\label{eq41-2}
\end{gather}
\end{pro}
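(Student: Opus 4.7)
The plan is to follow the same strategy as the previous propositions on Nichols algebras over $2$-dimensional simples in $\CYD$, mirroring most closely Proposition \ref{proV11}. I would treat the case $j=1$, the case $j=5$ being completely analogous.

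The first step is to compute the skew-derivations $\partial_1$ and $\partial_2$ on the relevant cubic monomials in $T(V_{4,1})$, using the braiding of $V_{4,1}$ furnished by Proposition \ref{probraidsimpletwo} together with the formula \eqref{eqSkew-1}. This is a routine but tedious calculation. Each of the three cubic expressions appearing in \eqref{eq41-1}, \eqref{eq41-2} should be annihilated by both $\partial_1$ and $\partial_2$, so by \eqref{Def-Nichols-IV} they lie in $I^3(V_{4,1})$. For the relation $v_1^3=0$, I would use \eqref{eqBriadingHopfalgebra} to verify $\Delta(v_1^3)=v_1^3\otimes 1+1\otimes v_1^3$, so that $v_1^3$ is primitive of degree three in $T(V_{4,1})$ and hence lies in $I^3(V_{4,1})$. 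This shows that the quotient $\mathfrak{B}$ of $T(V_{4,1})$ by the relations \eqref{eq41-1}, \eqref{eq41-2} projects onto $\BN(V_{4,1})$.

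Next, I would exhibit a linear spanning set of $\mathfrak{B}$ of size $18$ which is a left ideal containing $1$. A natural candidate is $I=\K\{v_1^i(v_2v_1)^jv_2^k\mid i,k\in\I_{0,2},\,j\in\I_{0,1}\}$, consisting of exactly $18$ elements. The key reductions one needs are: $v_1^3=0$ to bound the exponents of $v_1$; the identity $v_1v_2v_1=-\xi^j v_1^2v_2-\xi^{5j}v_2v_1^2$ from \eqref{eq41-2} to eliminate certain mixed terms; and the relation $v_2^2v_1=v_2v_1v_2-v_1v_2^2$ from \eqref{eq41-2}, together with the formula for $v_2^3$ obtained from \eqref{eq41-1}, to bound the exponents of $v_2$. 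One then verifies $v_1 I, v_2 I\subset I$, so $I$ is a left ideal and therefore generates $\mathfrak{B}$.

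Finally---and this is the main obstacle---I must show that these $18$ monomials are linearly independent in $\BN(V_{4,1})$. Note that at this point of the paper the Nichols algebra of the dual $V_{4,1}\As\cong V_{1,2}$ (cf.\ Remark \ref{rmkDmoddual}) has not yet been determined, so the duality shortcut via Proposition \ref{proNicholsdual} used for $\BN(V_{4,j+3})$ is not available here. Instead, I would argue by a direct degree-by-degree computation as in the final part of Proposition \ref{proV11}: tabulate $\partial_1$ and $\partial_2$ on each spanning monomial, and then induct on the total degree, applying $\partial_1$ and $\partial_2$ to any putative null combination to reduce to a combination in strictly lower degree, which is linearly independent by the inductive hypothesis. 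The delicate point will be to arrange the tabulation so that, after each differentiation, the further reductions enforced by \eqref{eq41-1} and \eqref{eq41-2} preserve enough information to isolate the coefficient of every one of the $18$ basis monomials; this is where the bookkeeping becomes heaviest and is the true source of difficulty in the proof.
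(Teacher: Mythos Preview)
Your proposal is correct and follows essentially the same approach as the paper: verify the relations lie in $I(V_{4,1})$ via skew-derivations, exhibit an $18$-element spanning set for the quotient as a left ideal, and then establish linear independence in $\BN(V_{4,1})$ by a direct degree-by-degree computation with $\partial_1,\partial_2$ (correctly noting that the duality shortcut via $V_{4,1}\As\cong V_{1,2}$ is unavailable at this point). The only cosmetic difference is the choice of PBW-type basis: the paper uses $\{v_2^i(v_1v_2)^jv_1^k\mid i,k\in\I_{0,2},\,j\in\I_{0,1}\}$ rather than your $\{v_1^i(v_2v_1)^jv_2^k\}$, which slightly changes the bookkeeping in the reduction and independence steps but not the substance of the argument.
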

\begin{proof}
 We prove the assertion for $V_{4,1}$, being the proof for $V_{4,5}$ completely analogous. Using the braiding of $V_{4,1}$ in Proposition \ref{probraidsimpletwo} and the formula \eqref{eqSkew-1}, a direct computation shows that
\begin{align*}
\partial_1(v_1^3)&=0,\quad  \partial_2(v_1^3)=0;\quad
\partial_1(v_2^3)=(1+\xi)v_1v_2-(1+\xi)v_2v_1,\quad  \partial_2(v_2^3)=0;\\
\partial_1(v_1^2v_2)&=v_1v_2-\xi v_2v_1,\   \partial_2(v_1^2v_2)=\xi^2v_1^2;\quad
\partial_1(v_2^2v_1)=(\xi+\xi^2)v_1^2+\xi^4v_2^2,\   \partial_2(v_2^2v_1)=\xi^5v_2v_1;\\
\partial_1(v_2v_1^2)&=-v_2v_1,\quad \partial_2(v_2v_1^2)=v_1^2;\quad
\partial_1(v_1v_2^2)=\xi v_2^2-(1+\xi)v_1^2,\quad  \partial_2(v_1v_2^2)=v_1v_2;\\
\partial_1(v_1v_2v_1)&=\xi^4 v_1v_2,\  \partial_2(v_1v_2v_1)=\xi v_1^2;\quad
\partial_1(v_2v_1v_2)=(\xi^2-1)v_1^2,\  \partial_2(v_2v_1v_2)=v_1v_2+\xi^5v_2v_1.
\end{align*}
Then the relations $\eqref{eq41-1}$ and $\eqref{eq41-2}$ are zero in $\BN(V_{4,1})$ being annihilated by $\partial_1,\partial_2$, which implies that the quotient
 $\mathfrak{B}$ of $T(V_{4,1})$ by relations $\eqref{eq41-1}$ and $\eqref{eq41-2}$  projects onto $\BN(V_{4,1})$. Let $I=\K\{v_2^i(v_1v_2)^jv_1^k,\ i,k\in\I_{0,2},j\in\I_{0,1}\}$. We claim that $I$ is a left ideal. Indeed, it suffices to show that $v_1 I,\ v_2I\subset I$, which can be induced by
\begin{gather*}
v_1^3=0,\quad v_2^3=v_1^2v_2+v_2v_1^2-v_1v_2v_1,\\ (v_1v_2)^2=-v_2^2v_1^2,\quad
v_1v_2^2=v_2v_1v_2-v_2^2v_1,\quad v_1^2v_2=\xi v_2v_1^2+\xi^2v_1v_2v_1.
\end{gather*}
Since $1\in I$, it follows that $I$ generates $\mathfrak{B}$. To prove that $\BN\cong\BN(V_{4,1})$,  it suffices to show that $\{v_2^i(v_1v_2)^jv_1^k, \ i,k\in\I_{0,2},j\in\I_{0,1}\}$ is linearly independent
in $\BN(V_{4,1})$. For this, we obtain that
\begin{align}
\begin{split}\label{eqV41-NNN}
\partial_2(v_1^i)&=0,\quad \partial_1(v_1^i)=(i)_{\xi^2}v_1^{i-1};\quad
\partial_2(v_2v_1^i)=v_1^i,\quad \partial_1(v_2v_1^i)=\xi^2(i)_{\xi^2}v_2v_1^{i-1};\\
\partial_2(v_2^2v_1^i)&=\xi^5v_2v_1^i,\quad\partial_1(v_2^2v_1^i)=(\xi+\xi^2)v_1^{i+1}+\xi^4(i)_{\xi^2}v_2^2v_1^{i-1};\\
\partial_2(v_1v_2v_1^i)&=\xi v_1^{i+1},\quad\partial_1(v_1v_2v_1^i)=\xi^4(i)_{\xi^2}v_1v_2v_1^{i-1};\\
\partial_2(v_2v_1v_2v_1^i)&=v_1v_2v_1^i+\xi^5v_2v_1^{i+1},\quad\partial_1(v_2v_1v_2v_1^i)=(\xi^2-1)v_1^{i+2}+(i)_{\xi^2}v_2v_1v_2v_1^{i-1};\\
\partial_2(v_2^2v_1v_2v_1^i)&=\xi^5v_2v_1v_2v_1^i-v_2^2v_1^{i+1},\quad\partial_1(v_2^2v_1v_2v_1^i)=(\xi^2-1)v_2v_1^{i+2}+\xi^2(i)_{\xi^2}v_2^2v_1v_2v_1^{i-1}.
\end{split}
\end{align}
It is clear that $\{1, v_1,v_2\}$ is linearly independent in $\BN(V_{4,1})$. Let $r_2=\alpha_1v_1^2+\alpha_2v_2v_1+\alpha_3v_2^2+\alpha_4v_1v_2=0$ in $\BN(V_{4,1})$ for some $\alpha_1,\ldots,\alpha_4\in\K$. From $\partial_2(r_2)=0=\partial_1(r_2)$, we obtain that
\begin{gather*}
(\alpha_2+\alpha_4\xi)v_1+\alpha_3\xi^5v_2=0\quad \Rightarrow\quad  \alpha_3=0=\alpha_2+\alpha_4\xi;\\
(\alpha_1\xi+\alpha_3(\xi+\xi^2))v_1+\xi^2\alpha_2v_2=0\quad\Rightarrow\quad \alpha_1\xi+\alpha_3(\xi+\xi^2)=0=\alpha_2.
\end{gather*}
Hence $\alpha_i=0$ for $i\in\I_{1,4}$ and so $\{v_1^2,v_2v_1,v_2^2,v_1v_2\}$ is linearly independent in $\BN(V_{4,1})$.
Let $r_3=\beta_{1}v_2v_1^2+\beta_{2}v_2^2v_1+\beta_{3}v_1v_2v_1+\beta_{4}v_2v_1v_2=0$ in $\BN(V_{4,1})$ for some $\beta_1,\ldots,\beta_4\in\K$. From $\partial_2(r_3)=0=\partial_1(r_3)$, we obtain that
\begin{gather*}
(\beta_1+\beta_3\xi)v_1^2+(\beta_2+\beta_4)\xi^5v_2v_1+\beta_4v_1v_2=0\Rightarrow \beta_1+\beta_3\xi=0=\beta_2+\beta_4=\beta_4;\\
(\xi+\xi^2)(\beta_2+\xi\beta_4)v_1^2-\beta_1 v_2v_1+\xi^4\beta_3v_1v_2+\xi^4\beta_2v_2^2=0\Rightarrow
\beta_2+\xi\beta_4=0=\beta_1,\ \beta_3=0=\beta_2.
\end{gather*}
Hence $\beta_i=0$ for $i\in\I_{1,4}$, which implies that $\{v_2v_1^2,v_2^2v_1,v_1v_2v_1,v_2v_1v_2\}$ is linearly independent in $\BN(V_{1,4})$. Let $r_4=\gamma_1v_2^2v_1^2+\gamma_2v_1v_2v_1^2+\gamma_3(v_2v_1)^2+\gamma_4v_2^2v_1v_2=0$ in $\BN(V_{1,4})$. From the equations \eqref{eqV41-NNN}, the terms  $v_2v_1v_2$ and $v_1v_2v_1$ appear only in $\partial_2(v_2^2v_1v_2)$ and $\partial_2((v_2v_1)^2)$, respectively. Hence $\gamma_3=0=\gamma_4$. Then the terms $v_2^2v_1$ and $v_1v_2v_1$  appear only in $\partial_1(v_2^2v_1^2)$ and $\partial_1(v_1v_2v_1^2)$, respectively, which implies that $\gamma_1=0=\gamma_2$. Hence $\{v_2^2v_1v_2,(v_2v_1)^2,v_2^2v_1^2,v_1v_2v_1^2\}$ is linearly independent in $\BN(V_{4,1})$. Observe that in \eqref{eqV41-NNN} the terms $v_2v_1v_2v_1$ and $v_1v_2v_1^2$ appear only in $\partial_2(v_2^2v_1v_2v_1)$ and $\partial_2(v_2v_1v_2v_1^2)$, respectively. Hence  $\{v_2v_1v_2v_1^2, v_2^2v_1v_2v_1\}$ is also linearly independent. Since the elements of different degree must be linearly independent, the claim follows.
\end{proof}

\begin{pro}\label{proNi14}
 $\BN(V_{1,j+3})$ for $j\in\{1,5\}$ is generated by $v_1,v_2$ satisfying the relations
\begin{gather}
v_1^3=0,\quad v_2^3+v_1^2v_2+v_2v_1^2+v_1v_2v_1=0, \label{eq12-1}\\
v_2^2v_1+v_1v_2^2+v_2v_1v_2=0,\quad \xi^{2j}v_1^2v_2+\xi^{4j}v_1v_2v_1+v_2v_1^2=0.\label{eq12-2}
\end{gather}
\end{pro}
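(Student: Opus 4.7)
The plan is to mimic the proof structure already used for Propositions \ref{proV11} and \ref{proV41}: verify the proposed relations in $\BN(V_{1,j+3})$ via skew-derivations, produce an $18$-element spanning set by showing it is a left ideal of the quotient, and then close the argument by a dimension count coming from duality.

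First I would compute the $\partial_1, \partial_2$ values on all quadratic and cubic monomials in $v_1, v_2$, using the braiding of $V_{1,j+3}$ from Proposition \ref{probraidsimpletwo} together with the formula \eqref{eqSkew-1} (and the braided coproduct rule \eqref{eqBriadingHopfalgebra} to compute $\Delta^{1,m-1}$). This is a direct though tedious computation of the same flavour as those displayed in the proofs of Propositions \ref{proV11} and \ref{proV41}, with the advantage that the braiding entries only involve the parameters $\xi^{i(3-j)}$ and $\xi^{-j(i+1)}$ for $(i,j) = (1, j+3)$. With these in hand one checks that each of the four generators of the ideal \eqref{eq12-1}, \eqref{eq12-2} is annihilated by both $\partial_1$ and $\partial_2$ (and that $v_1^3$ is actually primitive), so by \eqref{Def-Nichols-IV} all of them lie in $I(V_{1,j+3})$. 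Hence the quotient $\mathfrak{B}$ of $T(V_{1,j+3})$ by \eqref{eq12-1}, \eqref{eq12-2} surjects onto $\BN(V_{1,j+3})$.

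Second, I would exhibit the candidate basis $I = \K\{v_2^i (v_1 v_2)^\ell v_1^k \mid i,k \in \I_{0,2},\ \ell \in \I_{0,1}\}$, of cardinality $18$, and prove $v_1 I, v_2 I \subset I$ in $\mathfrak{B}$. The relations give the rewriting rules
\begin{align*}
v_1^3 &= 0, \quad v_2^3 = -v_1^2 v_2 - v_2 v_1^2 - v_1 v_2 v_1, \\
v_1 v_2^2 &= -v_2^2 v_1 - v_2 v_1 v_2, \quad v_1^2 v_2 = -\xi^{-2j} v_2 v_1^2 - \xi^{2j} v_1 v_2 v_1,
\end{align*}
from which one derives closed expressions for $v_2^3$, $v_1 v_2^2$, $v_1^2 v_2$ and $(v_1 v_2)^2$ (the latter using the same kind of manipulation as in Proposition \ref{proV11}). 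A finite case analysis then shows $I$ is stable under left multiplication by $v_1$ and $v_2$, and since $1 \in I$ this forces $I$ to span $\mathfrak{B}$; in particular $\dim \mathfrak{B} \le 18$.

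Finally, instead of checking linear independence in $\BN(V_{1,j+3})$ by hand, I would appeal to duality. By Remark \ref{rmkDmoddual}, $V_{1,j+3}^{\ast} \cong V_{-2, -j-6} \cong V_{4, -j \bmod 6}$; concretely $V_{1,4}^\ast \cong V_{4,5}$ when $j = 1$ and $V_{1,2}^\ast \cong V_{4,1}$ when $j = 5$. Proposition \ref{proV41} together with Proposition \ref{proNicholsdual} then gives $\dim \BN(V_{1, j+3}) = \dim \BN(V_{4, -j}) = 18 = |I|$, which forces the surjection $\mathfrak{B} \twoheadrightarrow \BN(V_{1, j+3})$ to be an isomorphism and $I$ to be a basis.

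The main obstacle is the second step: because the relation $\xi^{2j} v_1^2 v_2 + \xi^{4j} v_1 v_2 v_1 + v_2 v_1^2 = 0$ couples all three cubic monomials in $v_1, v_1, v_2$ with $j$-dependent coefficients, reductions such as $v_2 \cdot (v_2^2 v_1 v_2) = v_2^3 v_1 v_2$ or $v_1 \cdot (v_2^2 v_1 v_2)$ generate terms that need to be rewritten several times, and one must verify the reductions close up consistently without producing a hidden new relation that would shrink the dimension below $18$. Once that bookkeeping is cleanly organised, the dimension match with $\BN(V_{4, -j})$ closes everything off.
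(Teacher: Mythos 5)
Your proposal is correct and follows essentially the same route as the paper: verify the relations via $\partial_1,\partial_2$, show $I=\K\{v_2^i(v_1v_2)^\ell v_1^k\}$ is a spanning left ideal of the quotient, and conclude by the dimension count $\dim\BN(V_{1,j+3})=\dim\BN(V_{1,j+3}\As)=\dim\BN(V_{4,-j})=18$ via Remark \ref{rmkDmoddual} and Propositions \ref{proNicholsdual} and \ref{proV41}. The paper likewise treats the rewriting step as a routine verification, so no further argument is needed beyond what you describe.
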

\begin{proof}
We prove the assertion for $V_{1,2}$, being the proof for $V_{1,4}$ completely analogous. Using the braiding of $V_{1,2}$ in Proposition \ref{probraidsimpletwo} and the formula \eqref{eqSkew-1}, a direct computation shows that
\begin{align*}
\partial_1(v_1^3)&=0,\quad  \partial_2(v_1^3)=0;\quad
\partial_1(v_2^3)=(\xi^2-1)(v_1v_2+v_2v_1),\quad  \partial_2(v_2^3)=0;\\
\partial_1(v_1^2v_2)&=v_1v_2-\xi^2 v_2v_1,\ \partial_2(v_1^2v_2)=\xi^4v_1^2;\quad
\partial_1(v_2^2v_1)=(\xi+\xi^2)v_1^2+\xi^2v_2^2,\  \partial_2(v_2^2v_1)=\xi v_2v_1;\\
\partial_1(v_2v_1^2)&=v_2v_1,\quad \partial_2(v_2v_1^2)=v_1^2;\quad
\partial_1(v_1v_2^2)=\xi^5 v_2^2+(1+\xi^5)v_1^2,\quad  \partial_2(v_1v_2^2)=-v_1v_2;\\
\partial_1(v_1v_2v_1)&=\xi^5 v_1v_2,\   \partial_2(v_1v_2v_1)=\xi^2 v_1^2;\quad
\partial_1(v_2v_1v_2)=(\xi^4-1)v_1^2,\   \partial_2(v_2v_1v_2)=v_1v_2+\xi^4v_2v_1.
\end{align*}
Then relations $\eqref{eq12-1}$ and $\eqref{eq12-2}$ are zero in
$\BN(V_{1,2})$ being annihilated by $\partial_1,\partial_2$, which implies that the quotient $\mathfrak{B}$ of $T(V_{1,2})$ by $\eqref{eq12-1}$ and
$\eqref{eq12-2}$ projects onto $\BN(V_{1,2})$. Let $I=\K\{v_2^i(v_1v_2)^jv_1^k,\ i,k\in\I_{0,2},j\in\I_{0,1}\}$. By \eqref{eq12-1} \eqref{eq12-2}, it is easy to see that $v_1I,v_2I\subset I$ and hence $I$ is a left ideal. Since $1\in I$, $I$ linearly generates $\mathfrak{B}$. By Remark \ref{rmkDmoddual}, $V_{1,2}\As\cong V_{4,1}$. Then by Propositions \ref{proNicholsdual} and \ref{proV41}, $\dim\BN(V_{1,2})=\dim\BN(V_{4,1})=18=|I|$. Hence  $\BN\cong\BN(V_{1,2})$.
\end{proof}

\begin{proofthma}
 By Corollary $\ref{corSemisimpleNi}$,
 $V$ must be simple. A direct computation shows that $\Lambda-\Lambda\As$ consist of $(1,1)$, $(4,2)$,
$(3,1)$, $(2,2)$, $(1,4)$, $(4,5)$, $(2,4)$, $(3,5)$, $(4,4)$, $(1,5)$, $(4,1)$ and $(1,2)$. Then the rest of the assertions follow by Propositions $\ref{proV24}-\ref{proNi14}$.
\end{proofthma}


\begin{rmk}
The Nichols algebras of dimension $6$ in Theorem $\ref{thmA}$ have already appeared in \cite{HX16} $($also \cite{AGi17}$)$. To the best of our knowledge, the Nichols algebras of dimension greater than $6$ in Theorem \ref{thmA} constitute new examples of Nichols algebras of non-diagonal type.
\end{rmk}
\begin{rmk}\label{rmk-Basic}
The authors in \cite{AA18} gave a characterization of finite-dimensional Nichols algebras over basic Hopf algebras. In particular, as stated in \cite[Example 2.14]{AA18}, the Nichols algebras in Theorem \ref{thmA} can be recovered $($up to isomorphism$)$ in a similar way.

We describe it in brief. Since $\C\As\cong\A_1$, by \cite[Proposition\,2.2.1.]{AG99}, $\CYD\cong{}_{\A_1}^{\A_1}\mathcal{YD}$ as braided monoidal categories  via the functor $(F,\eta)$ defined by \eqref{eqVHD}. More precisely, by  Remark \ref{rmkDDDDD} and Propositions \ref{proYD-1} \& \ref{proYD-2}, $F(\K_{\chi^i})\in{}_{\A_1}^{\A_1}\mathcal{YD}$ with the Yetter-Drinfeld module structure given by
\begin{align}\label{eq-Basic-1}
g\cdot v=(-1)^iv,\quad x\cdot v=0,\quad \delta(v)=g^i\otimes v;
\end{align}
 and $F(V_{i,j})\in{}_{\A_1}^{\A_1}\mathcal{YD}$ with the Yetter-Drinfeld module structure given by
\begin{align*}
g\cdot v_1=\xi^{-j}v_1,\quad x\cdot v_1=x_2\xi^{3-j}v_2,\quad g\cdot v_2=\xi^{3-j}v_2,\quad x\cdot v_2=x_1\xi^{-j}v_1;\\
\delta(v_1)=g^i\otimes v_1,\quad \delta(v_2)=g^{i+1}\otimes v_2+\theta^{-1}(-1)^{i+1}\xi^{-i-1}g^ix\otimes v_1.
\end{align*}
where $x_1=\theta^{-1}\xi^{2-i}((-1)^i+\xi^j)$ and $x_2=\theta\xi^{4+i}((-1)^i-\xi^j)$.  By \cite[Proposition\,4.2]{GM10}, $\gr\A\cong (\A)_\sigma$ with the Hopf $2$-cocycle $\sigma$ given by  $$\sigma=\epsilon\otimes\epsilon-\zeta, \text{ where } \zeta(x^ig^j,x^kg^l)=(-1)^{jk}\delta_{2,i+k} \text{ for }  i,k\in\I_{0,1}, j,l\in\I_{0,5}.$$
Hence ${}_{\A_1}^{\A_1}\mathcal{YD}\cong{}_{\gr\A_1}^{\gr\A_1}\mathcal{YD}$ via the funtors $(G,\gamma)$ given in \cite[Theorem 2.7]{MO99}. It is easy to see that $\gr\A_1\cong\BN(V)\sharp\K[\Gamma]$, where  $\Gamma\cong\Z_6$ with the generator $g$ and $V:=\K\{x\}\in{}_{\Gamma}^{\Gamma}\mathcal{YD}$ by $g\cdot x=-x$ and $\delta(x)=g\otimes x$. It should be figured out that the simple representation of $\D(\gr\A_1)$ was studied in many papers, see for example \cite{AB04} for details and more references.

By using the methods in \cite{AA18} and \cite[Theorem 1.1]{AA18}, we conclude that $\BN(GF(V_{i,j}))$ is finite-dimensional, if and only if, $\BN(Y_{i,j})$ is finite-dimensional, where $Y_{i,j}:=\K\{v_1,x\}\in{}_{\Gamma}^{\Gamma}\mathcal{YD}$ with the Yetter-Drinfeld module structure given by
\begin{gather*}
g\cdot x=-x, \quad g\cdot v_1=\xi^{-j}v_1, \quad\delta(x)=g\otimes x,\quad \delta(v_1)=g^i\otimes v_1.
\end{gather*}
In particular, $\BN(Y_{i,j})$ is of diagonal type with the Dynkin diagram \xymatrix@C+15pt{\overset{-1 }{{\circ}}\ar
@{-}[r]^{(-)^i\xi^{-j}} & \overset{\xi^{-ij}}{{\circ}}}. By \cite[Table 1]{H09}, a direct computation shows that $\dim\BN(Y_{i,j})<\infty$, if and only if, $(i,j)\in\Lambda-\Lambda\As$.


It should be figured out that a similar idea was used in a submitted version of \cite{HX17} and  in \cite{X18} to discard infinite-dimensional Nichols algebras.
\end{rmk}
\begin{rmk}\label{rmk-Basic-D}
We claim that $\bigwedge F(\K_{\chi^k})\sharp\A_1$ for $k\in\{1,3,5\}$  are pairwise non-isomorphic as Hopf algebras. By \eqref{eqSmash} \eqref{eq-Basic-1}, we have $\Pp_{1,g}(\bigwedge F(\K_{\chi})\sharp\A_1)=\K\{1-g, x,v\}$, $\Pp_{1,g}(\bigwedge F(\K_{\chi^k})\sharp\A_1)=\K\{1-g,x\}$ for $k\in\{3,5\}$, $\Pp_{1,g^3}(\bigwedge F(\K_{\chi^3})\sharp\A_1)=\K\{1-g^3,v\}$ and $\Pp_{1,g^3}(\bigwedge F(\K_{\chi^5})\sharp\A_1)=\K\{1-g^3\}$, which implies that the claim follows.
\end{rmk}

\section{Hopf algebras over $\C$}\label{secHopfalgebra}
In this section, we determine all finite-dimensional Hopf algebras over $\C$ whose diagrams are strictly graded and the corresponding infinitesimal braidings are simple objects in $\CYD$.
\subsection{Generation in degree one}We show that all finite-dimensional Hopf algebras over $\C$ are generated in degree one with respect to the standard filtration, under the assumption that the diagrams are strictly graded and the infinitesimal braidings are simple objects in $\CYD$.
\begin{lem}\label{lemGeneration}
Let $S=\oplus_{n\geq 0}S(n)$ be a finite-dimensional connected $\N$-graded Hopf algebra in $\CYD$ such that $W:=S(1)$ is a simple object in $\CYD$. Assume that $S$ is generated by $S(0)\oplus S(1)$. Then $S\cong\BN(W)$.
\end{lem}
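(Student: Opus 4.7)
The plan is to exhibit a canonical surjective graded Hopf algebra morphism $S \twoheadrightarrow \BN(W)$ in $\CYD$ and then argue that it is an isomorphism by excluding primitive elements of $S$ in degree $\geq 2$.

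First, since $W = S(1) \subseteq \Pp(S)$, the universal property of the tensor algebra $T(W)$ as the free braided bialgebra on primitive elements yields a graded bialgebra morphism $\pi \colon T(W) \to S$ in $\CYD$ extending the inclusion $W \hookrightarrow S$; surjectivity follows from the hypothesis that $S$ is generated by $S(0) \oplus S(1)$, and the antipode is automatically respected since both are connected graded. The kernel $J := \ker \pi$ is a graded Hopf ideal with $J \cap W = 0$; the coideal property additionally forces $J \cap T^0(W) = 0$, so $J \subseteq \bigoplus_{n \geq 2} T^n(W)$. By the maximality built into the definition of $I(W)$ as the largest graded ideal and coideal of $T(W)$ in $\CYD$ with $I(W) \cap W = 0$, we deduce $J \subseteq I(W)$. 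Hence $\pi$ factors through a surjective graded Hopf algebra morphism $\overline{\pi}\colon S \twoheadrightarrow \BN(W)$ in $\CYD$.

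Second, let $K := \ker \overline{\pi}$, a graded Hopf ideal of $S$ with $K(0) = K(1) = 0$. Suppose for contradiction $K \neq 0$, and pick $n \geq 2$ minimal with $K(n) \neq 0$. For $v \in K(n)$ and $0 < i < n$, the coideal property of $K$ combined with minimality of $n$ forces
\begin{equation*}
\Delta^{i,n-i}(v) \in K(i) \otimes S(n-i) + S(i) \otimes K(n-i) = 0,
\end{equation*}
so $\Delta(v) = v \otimes 1 + 1 \otimes v$. Hence $K(n)$ is a nonzero sub-object of $\Pp(S) \cap S(n)$ in $\CYD$, and injectivity of $\overline{\pi}$ reduces to the claim $\Pp(S) = W$.

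The main obstacle is to exclude these higher-degree primitives, and this is where finite-dimensionality of $S$ and simplicity of $W$ should combine. Since $\dim S < \infty$ and $S \twoheadrightarrow \BN(W)$, the Nichols algebra $\BN(W)$ is finite-dimensional, so by Theorem~\ref{thmA} the module $W$ belongs to the explicit finite list of simple objects classified there. For each such $W$, a nonzero primitive $v \in K(n) \subseteq \Pp(S) \cap S(n)$ would correspond to one of the defining relations of $\BN(W)$ listed in Theorem~\ref{thmA} failing to hold in $S$. Running the skew-derivation computations of Section~\ref{secNicholsalg} in reverse, one verifies case by case that omitting any such relation introduces elements whose iterated products remain linearly independent, forcing $\dim S = \infty$ and contradicting the finiteness hypothesis. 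Therefore $K = 0$ and $\overline{\pi}$ is an isomorphism, so $S \cong \BN(W)$.
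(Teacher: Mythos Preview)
Your reduction is correct and matches the paper: one has a surjection $S\twoheadrightarrow\BN(W)$, and by Theorem~\ref{thmA} the module $W$ lies in the explicit list, so it remains to show that every defining relation $r$ of $\BN(W)$ already vanishes in $S$ (equivalently, $\Pp(S)=W$). Your identification of the minimal-degree part of $K$ with defining relations not yet holding in $S$ is also fine.

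The gap is entirely in your last paragraph. ``Running the skew-derivation computations of Section~\ref{secNicholsalg} in reverse'' is not what is needed, and it is not what the paper does. Skew-derivations were used there to verify that each $r$ lies in $I(W)$; they say nothing about why $r$ must vanish in an arbitrary finite-dimensional quotient $S$ of $T(W)$. The actual mechanism is different and case-specific: for each relation $r$ one first checks that $r\in\Pp(S)$ (sometimes this requires that lower-degree relations have already been shown to vanish, so the order matters), then computes the $\C$-coaction $\delta(r)$ and the braiding. In most cases one finds $c(r\otimes r)=r\otimes r$, whence $r\neq 0$ would give $\dim\BN(\K r)=\infty$ inside $S$ by Remark~\ref{rmkN-infity}. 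In the remaining cases (e.g.\ $W\cong V_{3,j}$, $V_{2,j+3}$, $V_{1,j}$) one instead exhibits a two-dimensional braided subspace of $\Pp(S)$, either of diagonal type with generalized Dynkin diagram not in Heckenberger's list \cite{H09}, or isomorphic to some $V_{k,l}$ with $(k,l)\in\Lambda^{\ast}$, and invokes Lemma~\ref{lemInfty}. None of this is captured by ``iterated products remain linearly independent''; you should carry out the braiding computations explicitly, as the paper does.
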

\begin{proof}

By the assumption, there is an epimorphism $S\twoheadrightarrow \BN(W)$. Then by Theorem \ref{thmA}, $W$ is isomorphic either  to $\K_{\chi^{k}}$ for $k\in\{1,3,5\}$ or $V_{i,j}$ for $(i,j)\in\Lambda-\Lambda\As$. To prove that $S\cong\BN(W)$, it suffices to show that the defining relations of $\BN(W)$ hold in $S$. This can be done by a case by case computation.

$(1)$ Assume that $W\cong\K_{\chi^{k}}$ for $k\in\{1,3,5\}$. By Lemma \ref{lemNicholbyone}, $\BN(W)\cong\K[v]/(v^2)$. Suppose that $v^2\neq 0$ in $S$.  By Propositions  \ref{braidingone} and \ref{proYD-1},  $c(v\otimes v)=-v\otimes v$ and $\delta(v)=a^3\otimes v$. Then we have
\begin{align*}
\Delta_S(v^2)=v^2\otimes 1+v\otimes v+c(v\otimes v)+1\otimes v^2=v^2\otimes 1+1\otimes v^2,\quad \delta(v^2)=\delta(v)^2=1\otimes v^2.
\end{align*}
It follows that $X:=\K\{v^2\}\subset\Pp(S)$ and $c(v^2\otimes v^2)=1\cdot v^2\otimes v^2=v^2\otimes v^2$, which implies that $\dim\BN(X)=\infty$. Since $\dim S<\infty$, the relation $v^2=0$ must hold in $A$.

$(2)$ Assume that $W\cong V_{3,j}$ for $j\in\{1,5\}$. Set $r_1=v_1v_2+\xi^{-j}v_2v_1$. By Proposition \ref{proV35}, $\BN(W)=\K\langle v_1,v_2\mid v_1^2=0,r_1=0,v_2^3=0\rangle$. By Proposition \ref{probraidsimpletwo}, we have $c(v_1\otimes v_1)=-v_1\otimes v_1$. Hence $v_1^2\in\Pp(S)$ and $c(v_1^2\otimes v_1^2)=v_1^2\otimes v_1^2$, which implies that $v_1^2=0$ in $S$.  Now we claim that $r_1=0$ in $S$. By Propositions \ref{probraidsimpletwo} and \ref{proYD-2}, a direct computation shows that $r_1\in\Pp(S)$ and
\begin{align*}
\delta(r_1)=a^{3-2j}\otimes r_1+(1-\xi^j)(\xi^j+\xi^{-j})ba^{2-2j}\otimes v_1^2=a^{3-2j}\otimes r_1,\quad a\cdot r_1=\xi r_1,\quad b\cdot r_1=0.
\end{align*}
Suppose that $r_1\neq 0$ in $S$. Set $Y:=\K\{v_1,r_1\}\subset\Pp(S)$. By \eqref{equbraidingYDcat} and Proposition \ref{probraidsimpletwo}, we have
\begin{align*}
c(v_1\otimes v_1)=-v_1\otimes v_1,\  c(v_1\otimes r_1)=\xi^{-j}r_1\otimes v_1,\quad c(r_1\otimes v_1)=-v_1\otimes r_1,\  c(r_1\otimes r_1)=\xi^jr_1\otimes r_1.
\end{align*}
That is, $Y$ is a braided vector space of diagonal type with the generalized Dynkin diagram \xymatrix@C+20pt{\overset{-1 }{{\circ}}\ar
@{-}[r]^{-\xi^{-j}} & \overset{\xi^j}{{\circ}}}.  It does not appear in \cite[Table 1]{H09}, that is, $Y$ has an infinite root system, which implies that $\dim\BN(Y)=\infty$ and hence $\dim S=\infty$, a contradiction. Therefore, the claim follows.  Since $v_2^3\in I(W)$,  $\Delta_S(v_2^3)\in T(W)\otimes I(W)+I(W)\otimes T(W)$. More precisely,
\begin{align*}
\Delta_S(v_2^3)&=v_2^3\otimes 1+1\otimes v_2^3+\sum\nolimits_{i=1}^2v_i\otimes r_i+l_i\otimes v_i,
\quad l_1,l_2,r_1,r_2\in I^2(W).
\end{align*}
Observe that $I^2(W)$ is generated by $v_1^2$ and $r_1$. Then $r_i=0=l_i$ in $S$ for $i\in\I_{1,2}$ and hence $v_2^3\in\Pp(S)$. By Proposition \ref{proYD-2}, $\delta(v_2^3)=\delta(v_2)^3=1\otimes v_2^3$. Then by the formula  \eqref{equbraidingYDcat}, $c(v_2^3\otimes v_2^3)=v_2^3\otimes v_2^3$, which implies that  the relation $v_2^3=0$ holds in $A$.

$(3)$ Assume that $W\cong V_{2,j+3}$ for $j\in\{1,5\}$. Set $r_1:=v_2^2+\xi v_1^2$ and $r_2:=v_1v_2-v_2v_1$. By Proposition \ref{proV24}, $\BN(W)=\K\langle v_1,v_2\mid r_1=0,r_2=0,v_1^3=0\rangle$. By Propositions \ref{probraidsimpletwo} and \ref{proYD-2}, a direct computation shows that $r_1, r_2\in\Pp(S)$ and
\begin{gather*}
\delta(r_2)=a^{3-2j}\otimes r_2+(1-\xi^{2j})ba^{2-2j}\otimes r_1,\quad \delta(r_1)=a^{-2j}\otimes r_1+\xi^{j+4}ba^{-1-2j}\otimes r_2.\\
a\cdot r_2=\xi^5 r_2,\quad b\cdot r_2=0,\quad a\cdot r_1=r_1, \quad b\cdot r_1=r_2.
\end{gather*}
Suppose that $r_1\neq 0$ in $S$. Then $r_2\neq 0$ in $S$ and $Z:=\K\{r_2,r_1\}\in\CYD$ is isomorphic to $V_{5,2j-3}$ with the isomorphism  given by $\phi:Z\rightarrow V_{5,2j-3}:r_2\rightarrow v_1,r_1\rightarrow v_2$.  Since $(5,2j-3)\in\Lambda^{\ast}$,  by Lemma \ref{lemInfty}, $\dim\BN(Z)=\infty$ and hence $\dim S=\infty$, a contradiction. Therefore, $r_1=0=r_2$ in $S$.
Then by Propositions \ref{proYD-2} and \ref{probraidsimpletwo},  we have
\begin{gather*}
\Delta_S(v_1^3)=v_1^3\otimes 1+(1+\xi^{-2j}+\xi^{-4j})v_1^2\otimes v_1+v_1^3=v_1^3\otimes 1+1\otimes v_1^3,\quad\delta(v_1^3)=\delta(v_1)^3=1\otimes v_1^3.
\end{gather*}
Hence $v_1^3\in\Pp(S)$ and $c(v_1^3\otimes v_1^3)=v_1^3\otimes v_1^3$, which implies that  $v_1^3=0$  in $S$.

$(4)$ Assume that $W\cong V_{1,j}$ for $j\in\{1,5\}$. Let $r_1:=v_1^2v_2+\xi^j v_2v_1^2+(1+\xi^j)v_1v_2v_1$,
$r_2:=v_1^3+v_2^2v_1+v_1v_2^2+v_2v_1v_2$ and $r_3:=v_1^2v_2+v_2v_1^2+v_1v_2v_1+v_2^3$ for simplicity. By Proposition \ref{proV11}, $\BN(W)=\K\langle v_1,v_2\mid v_1^6=0, r_i=0,i\in\I_{1,3} \rangle$.  By Proposition \ref{proYD-2}, we have
\begin{gather*}
a\cdot r_2=\xi^5r_2,\quad b\cdot r_2=0,\quad \delta(r_2)=a^3\otimes r_2,\\
\delta(r_1)=1\otimes r_1+2\xi^2(1+\xi^j)ba^{2-3j}\otimes r_2,\quad\delta(r_3)=1\otimes r_2+2\xi^2ba^5\otimes r_2.
\end{gather*}
Moreover, it follows by a direct computation that $r_2\in\Pp(S)$. Suppose that $r_2\neq 0$ in $S$. Set $T:=\K\{v_1,r_2\}\subset\Pp(S)$. By \eqref{equbraidingYDcat} and Proposition \ref{probraidsimpletwo}, we have
\begin{align*}
c(v_1\otimes v_1)=\xi^{-j}v_1\otimes v_1,\  c(v_1\otimes r_2)=\xi^{j}r_2\otimes v_1,\quad c(r_2\otimes v_1)=-v_1\otimes r_2,\  c(r_2\otimes r_2)=-r_2\otimes r_2.
\end{align*}
That is, $T$ is a braided vector space of diagonal type with the Dynkin diagram \xymatrix@C+15pt{\overset{-1 }{{\circ}}\ar
@{-}[r]^{-\xi^{j}} & \overset{\xi^{-j}}{{\circ}}}.  It does not appear in \cite[Table 1]{H09} and hence $\dim\BN(T)=\infty$, which implies that $\dim S=\infty$, a contradiction. It follows that $r_2=0$ in $S$ and hence $\delta(r_i)=1\otimes r_i$ for $i\in\{1,3\}$. Then by \eqref{equbraidingYDcat} and Proposition \ref{probraidsimpletwo}, a direct computation shows that  $r_i\in\Pp(S)$ and $c(r_i\otimes r_i)=r_i\otimes r_i$  for $i\in\{1,3\}$, which implies that $r_i=0$ in $S$. Finally, $\Delta_S(v_1^6)=v_1^6\otimes 1+1\otimes v_1^6$ and $\delta(v_1^6)=1\otimes v_1^6$, which implies that $v_1^6\in\Pp(S)$ and $c(v_1^6\otimes v_1^6)=v_1^6\otimes v_1^6$. Hence $v_1^6=0$ in $S$.

$(5)$ Assume that $W$ is isomorphic either to $V_{4,j}$, $V_{4,j+3}$ or $V_{1,j+3}$ for $j\in\{1,5\}$. Then by \eqref{equbraidingYDcat} and Propositions \ref{proYD-2} \& \ref{probraidsimpletwo}, a direct computation shows that $r\in\Pp(S)$ and $c(r\otimes r)=r\otimes r$ for any defining relation $r$ of $\BN(W)$,  which implies that $r=0$ in $S$. Here we only perform the case $W\cong V_{4,j}$, leaving the rest as exercise for the reader. Let  $r_1:=v_2v_1^2+\xi^jv_1v_2v_1+\xi^{2j}v_1^2v_2$, $r_2:=v_2^3-v_1^2v_2-v_2v_1^2+v_1v_2v_1$ and $r_3:=v_2^2v_1+v_1v_2^2-v_2v_1v_2$ for simplicity. By Proposition \ref{proV41}, $\BN(W)=\K\langle v_1,v_2\rangle/(v_1^3,r_1,r_2,r_3)$. By Proposition \ref{proYD-2}, we have
\begin{gather*}
\delta(v_1^3)={}a^{3}\otimes v_1^3+\xi^2(1-\xi^{j}){}ba^{-1-3j}\otimes r_1,\quad
\delta(r_1)={}1\otimes r_1,\quad
\delta(r_2)={}1\otimes r_2,\\
\delta(r_3)={}a^{3}\otimes r_3-2\xi^{4j+2}(1-\xi^{j}){}ba^{-1-3j}\otimes r_2+(\xi^{j+2}-2\xi^{2j+2}-\xi^2){}ba^{-1-3j}\otimes r_1;
\end{gather*}
Then by \eqref{equbraidingYDcat}, $c(v_1^3\otimes v_1^3)=v_1^3\otimes v_1^3$ and $c(r_i\otimes r_i)=r_i\otimes r_i$ for $i\in\I_{1,2}$. It follows by a direct computation that $v_1^3,\ r_1\in\Pp(S)$. Hence $r_1=0=v_1^3$ in $S$. Then a direct computation shows that $r_2\in\Pp(S)$ and hence $r_2=0$ in $S$. Similarly, $r_3\in\Pp(S)$ and $c(r_3\otimes r_3)=r_3\otimes r_3$, which implies that $r_3=0$ in $S$.
\end{proof}

\begin{thm}\label{thmNicholsgeneratedbyone}
Let $A$ be a finite-dimensional Hopf algebra over $\C$ such that the corresponding infinitesimal braiding $V$ is a simple object in $\CYD$. Assume that the diagram of $A$ is strictly graded. Then $A$ is generated in degree one with respect to the standard filtration.
\end{thm}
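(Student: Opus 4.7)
The statement is equivalent to showing that the diagram $R$ of $A$, a graded braided Hopf algebra in $\CYD$, is generated as a braided algebra by its degree-one component $V = R(1)$. This follows from $\gr A \cong R \sharp \C$ and the fact that $\C$ sits entirely in degree zero of the standard filtration, so generation of $A$ in degree one with respect to $\{A_{[n]}\}$ translates to generation of $R$ by $R(1)$.

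The plan is to introduce $T \subseteq R$, the braided subalgebra generated by $V$, and to show $T = R$. The strictly graded hypothesis gives $V = R(1) = \Pp(R)$, which makes $T$ a graded braided sub-Hopf algebra of $R$ in $\CYD$. Applying Lemma \ref{lemGeneration} to $T$, which is finite-dimensional, connected $\N$-graded, and generated by its simple degree-one part $V$, yields $T \cong \BN(V)$. Thus all the relations listed in Theorem \ref{thmA} already hold in $R$, and the remaining question is whether $R$ contains essentially new elements in higher degrees.

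To prove $T = R$, I would argue by contradiction. Assume $T \subsetneq R$, and let $n \geq 2$ be minimal with $T(n) \subsetneq R(n)$. Choose $r \in R(n) \setminus T(n)$ and set $\bar{\Delta}(r) := \Delta(r) - r \otimes 1 - 1 \otimes r \in \bigoplus_{i=1}^{n-1} R(i) \otimes R(n-i)$. By the minimality of $n$ we actually have $\bar{\Delta}(r) \in T \otimes T$, and coassociativity of $\Delta$ makes $\bar{\Delta}(r)$ into a normalized $2$-cocycle for the coalgebra $T = \BN(V)$. The plan is then to produce $s \in T(n)$ with $\bar{\Delta}(s) = \bar{\Delta}(r)$: given such $s$, the element $r - s \neq 0$ lies in $\Pp(R)$ and has pure degree $n \geq 2$, contradicting the strictly graded hypothesis $\Pp(R) = V = R(1)$. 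This forces $T = R$.

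The main obstacle is exactly this last step, namely showing that every such normalized $2$-cocycle with values in $T \otimes T$ arising from an element of $R$ is a coboundary inside $T = \BN(V)$ (equivalently, that the relevant second coalgebra cohomology of $\BN(V)$ vanishes in positive internal degree). This is essentially the content of the universal characterization of Nichols algebras as the unique strictly graded braided Hopf algebras with a prescribed space of primitives, and is the technical heart of the theorem. If an abstract argument is to be avoided, the coboundary can be constructed by hand using the explicit presentations of $\BN(V)$ in Theorem \ref{thmA}, handling separately the one-dimensional cases $\K_{\chi^k}$ and each of the two-dimensional simples $V_{i,j}$ with $(i,j) \in \Lambda - \Lambda^{\ast}$.
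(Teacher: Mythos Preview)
Your argument has a genuine gap at exactly the point you flag. You apply Lemma~\ref{lemGeneration} to the subalgebra $T\subseteq R$ and correctly obtain $T\cong\BN(V)$, but then you need to show $T=R$, and your cohomological step does not close. You claim that the vanishing of the relevant second coalgebra cohomology of $\BN(V)$ in positive degree ``is essentially the content of the universal characterization of Nichols algebras as the unique strictly graded braided Hopf algebras with a prescribed space of primitives.'' That is circular: the uniqueness in Definition~\ref{defiNichols} requires \emph{both} $\Pp(R)=V$ \emph{and} generation by $R(1)$; dropping the generation hypothesis is precisely what you are trying to prove, and there is no general theorem supplying it. Your fallback proposal of constructing the coboundary by hand case by case is not carried out and would amount to reproving, degree by degree and for each $V$, exactly the implication you need.

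The paper avoids this obstacle entirely by dualizing. If $S$ is the graded dual of $R$, then by \cite[Lemma~2.4]{AS02} the hypothesis $\Pp(R)=R(1)$ is equivalent to $S$ being generated in degree one, while the desired conclusion that $R$ is generated in degree one is equivalent to $\Pp(S)=S(1)$. So one applies Lemma~\ref{lemGeneration} to $S$ (with $W=S(1)\cong V^{\ast}$, which is again simple by Remark~\ref{rmkDmoddual}) rather than to the subalgebra $T\subseteq R$; the lemma yields $S\cong\BN(W)$, hence $\Pp(S)=W$, hence $R$ is generated in degree one. The duality trick converts ``strictly graded'' into ``generated in degree one'' for free, so Lemma~\ref{lemGeneration} is the \emph{only} nontrivial ingredient and no cohomology computation is needed.
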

\begin{proof}
Let $R=\oplus_{n\geq 0}R(n)$ be the diagram of $A$ and $S=\oplus_{n\geq 0}S(n)$ the graded dual of $R$ in $\CYD$. It suffices to prove that $R$ is a Nichols algebra. We write $W:=S(1)$ and $V:=R(1)$. Since $W\cong V\As$ in $\CYD$ and $\dim\BN(V)<\infty$,  by  Remark \ref{rmkDmoddual} and Proposition \ref{proNicholsdual}, $W$ is a simple object in $\CYD$  such that $\dim\BN(W)<\infty$.
Since $R(1)=\Pp(R)$, by \cite[Lemma\,2.4]{AS02}, $S$ is generated by $W$ and $R$ is a Nichols algebra if and only if $\Pp(S)=W$, that is, if $S$ is a Nichols algebra. Then the theorem follows by Lemma \ref{lemGeneration}.
\end{proof}

\subsection{Liftings of Nichols algebras over $\C$}We first show that $\BN(V)\sharp\C$ does not admit non-trivial deformations, where $V$ is isomorphic either to
$\K_{\chi^{k}}$ for $k\in\{1,3,5\}$, $V_{1,1}$, $V_{4,2}$, $V_{3,1}$, $V_{2,2}$,  $V_{4,5}$, $V_{2,4}$,
 $V_{3,5}$, $V_{4,4}$, $V_{1,5}$, or $V_{4,1}$.
\begin{lem}\label{lemOnedimNichDeforma}
Let $A$ be a finite-dimensional Hopf algebra over $\C$ such that $\gr A\cong \BN(V)\sharp\C$, where  $V$ is isomorphic to
$\K_{\chi^{k}}$ for $k\in\{1,3,5\}$. Then $A\cong \bigwedge \K_{\chi^{k}}\sharp \C$.
\end{lem}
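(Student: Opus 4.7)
The plan is to follow the standard lifting strategy: produce an element $\tilde{v}\in A$ lifting the generator $v$ of $\BN(\K_{\chi^k})=\K\oplus\K v$, refine the choice of $\tilde{v}$ so that it satisfies the defining relations of the bosonization in $A$, and conclude by dimension count.

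For $k\in\{1,3,5\}$ we have $3k\equiv 3\pmod 6$, so Proposition \ref{proYD-1} gives $\delta(v)=a^3\otimes v$, $a\cdot v=\xi^k v$, and $b\cdot v=0$. At the graded level $v$ is therefore a $(1,a^3)$-skew-primitive of $\gr A$ with braided commutation relations $av=\xi^k va$ and $bv=\xi^k vb$. First I would choose any lift $\tilde{v}\in A_{[1]}$ of $v$ and modify it by an element of $\Pp_{1,a^3}(\C)=\K\{ba^2,1-a^3\}$ (see Remark \ref{rmkDDDDD}) so as to arrange $\Delta(\tilde{v})=\tilde{v}\otimes 1+a^3\otimes\tilde{v}$. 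Then I would further adjust $\tilde{v}$ by an element of $\C$ so that $a\tilde{v}=\xi^k\tilde{v}a$ and $b\tilde{v}=\xi^k\tilde{v}b$: these hold in $\gr A$, hence the discrepancies $c_a=a\tilde{v}-\xi^k\tilde{v}a$ and $c_b=b\tilde{v}-\xi^k\tilde{v}b$ lie in $\C$, and applying $\Delta$ together with the formulas of Proposition \ref{proStrucOfC} and the skew-primitivity of $\tilde{v}$ pins them down enough to be absorbed.

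Once $\tilde{v}$ has these properties, observe that $\xi^{3k}=(-1)^k=-1$ for $k\in\{1,3,5\}$ and $a^6=1$, so $a^3\tilde{v}=-\tilde{v}a^3$, and therefore
\begin{align*}
\Delta(\tilde{v}^2)=(\tilde{v}\otimes 1+a^3\otimes\tilde{v})^2=\tilde{v}^2\otimes 1+(\tilde{v}a^3+a^3\tilde{v})\otimes\tilde{v}+a^6\otimes\tilde{v}^2=\tilde{v}^2\otimes 1+1\otimes\tilde{v}^2.
\end{align*}
Hence $\tilde{v}^2\in\Pp(A)$, and since $A$ is finite-dimensional over a field of characteristic zero no nonzero primitive element exists (otherwise the subalgebra it generates would be a polynomial algebra in one variable, of infinite dimension), so $\tilde{v}^2=0$. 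The universal property of the smash product then yields a Hopf algebra morphism $\bigwedge\K_{\chi^k}\sharp\C\to A$ restricting to the identity on $\C$ and sending $v$ to $\tilde{v}$; it is surjective since $A=\C+\C\tilde{v}$ by a dimension count on the standard filtration, and hence an isomorphism because $\dim\bigwedge\K_{\chi^k}\sharp\C=24=\dim A$.

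The main obstacle will be the simultaneous absorption of $c_a$ and $c_b$ in the first step by a single modification of $\tilde{v}$ that does not destroy the coproduct formula already arranged. This reduces to analyzing the $\xi^k$-twisted commutator maps $[a,-]_{\xi^k}$ and $[b,-]_{\xi^k}$ on $\C$ and checking that the needed correction lies in a common preimage compatible with the $(1,a^3)$-skew-primitive condition; coassociativity of $\Delta$ applied to the identity $a\tilde{v}-\xi^k\tilde{v}a=c_a$ (and the analogue for $c_b$) will be the key tool that forces $c_a,c_b$ into a small enough subspace of $\C$ for such an absorption to succeed.
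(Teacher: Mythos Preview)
Your approach is the paper's: compute $\Delta(v^2)=v^2\otimes1+(va^3+a^3v)\otimes v+1\otimes v^2=v^2\otimes1+1\otimes v^2$ using $a^3v=-va^3$, conclude $v^2\in\Pp(A)=0$, and hence $A\cong\gr A$. The paper simply takes for granted (following standard lifting-method conventions, cf.\ \cite{AS98,GG16}) that $v$ lifts to $A$ with the same skew-primitive coproduct and the same commutation relations with $a,b$, so your preliminary work on $\tilde v$, $c_a$, $c_b$ is not present there --- and note in passing that your first modification step is misstated: adding an element of $\Pp_{1,a^3}(\C)$ preserves rather than enforces the $(1,a^3)$-skew-primitive condition on $\tilde v$.
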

\begin{proof}
By Lemma \ref{lemNicholbyone}, $\BN(V)\cong \bigwedge V\cong \K[v]/(v^2)$. We prove the lemma by showing that the defining relations of $\gr A$ hold in $A$. By Proposition \ref{proYD-1} and the formula \eqref{eqSmash}, we have
\begin{align*}
\Delta(v^2)=\Delta(v)^2=v^2\otimes 1+(va^3+a^3v)\otimes v+1\otimes v^2=v^2\otimes 1+1\otimes v^2.
\end{align*}
Since $A$ is finite-dimensional, it follows that $\Pp(A)=0$ and hence $v^2=0$ in $A$.
\end{proof}

\begin{lem}\label{lemTwodimNichDeforma1}
Let $A$ be a finite-dimensional Hopf algebra over $\C$ such that $\gr A\cong \BN(V)\sharp\C$, where  $V$ is isomorphic either to
$V_{3,1}$ or $V_{3,5}$. Then $A\cong \BN(V)\sharp \C$.
\end{lem}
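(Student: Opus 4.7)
The plan is to mirror the strategy of Lemma \ref{lemOnedimNichDeforma} and show that the three defining relations of $\BN(V_{3,j})$ listed in Proposition \ref{proV35}---namely $v_1^2=0$, $v_1v_2+\xi^{-j}v_2v_1=0$, and $v_2^3=0$---persist in any lifting $A$. By Theorem \ref{thmNicholsgeneratedbyone}, $A$ is generated in degree one relative to the standard filtration, so proving these three identities (for lifts $v_1,v_2$ of the generators of $V$) suffices to conclude $A\cong\BN(V)\sharp\C$.

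First, I would write down $\Delta_A(v_i)$ explicitly using the smash-coproduct formula \eqref{eqSmash} together with the coaction on $V_{3,j}$ given in Proposition \ref{proYD-2}; for $j=1$ this yields $\Delta(v_1)=v_1\otimes 1+a^5\otimes v_1+\xi^4(1-\xi^4)ba^4\otimes v_2$ and an analogous expression for $\Delta(v_2)$. I would also record the smash-product commutation rules between $a,b$ and $v_1,v_2$ from \eqref{eqSmash}. For each candidate relation $r$, compute $\Delta_A(r)$ and organize the result by standard-filtration bidegree. The contribution in positive bidegree simply reproduces the computation that is already performed inside $\BN(V)\sharp\C$, and vanishes there because $r=0$ in the Nichols algebra; hence the residue lies in $\C\otimes\C$ and must equal $\Delta_\C(u)$, where $u\in\C=A_{[0]}$ is the image of $r$ in the Hopf coradical.

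Next, I would exploit the $\C$-module structure: the element $u$ inherits the same action of $a$ and $b$ that $r$ has, which is determined by the $\C$-action on the generators of $V_{3,j}$ in Proposition \ref{proYD-2}. Thus $u$ is constrained to be a skew-primitive element of $\C$ for a specific pair of group-likes that is simultaneously an eigenvector for concrete characters of $a$ and $b$. Since $\G(\C)=\{1,a^3\}$ and the only non-trivial skew-primitive space is $\Pp_{1,a^3}(\C)=\K\{ba^2,1-a^3\}$ by Remark \ref{rmkDDDDD}, one checks directly that these combined constraints force $u=0$ in each case, whence $r=0$ in $A$.

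The main obstacle is the bookkeeping produced by the off-diagonal comodule term $\xi^4(1-\xi^4)ba^4\otimes v_2$ in $\Delta(v_1)$: it generates nontrivial cross-terms in the coproduct of each relation, most visibly in $\Delta(v_2^3)$, and these must be shown to cancel after rewriting via the smash-product commutation rules. Processing the three relations in the order $v_1^2$, then $v_1v_2+\xi^{-j}v_2v_1$, then $v_2^3$ allows each later computation to invoke the previously established identities, keeping the arithmetic tractable and mimicking the inductive style used earlier in Lemma \ref{lemGeneration}.
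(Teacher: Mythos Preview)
Your coproduct-based strategy has a genuine gap. You claim that after computing $\Delta_A(r)$ for each relation $r$, ``the residue lies in $\C\otimes\C$'' and hence $r$ reduces to an element $u\in\C=A_{[0]}$. But a priori one only knows $r\in A_{[1]}$, not $r\in A_{[0]}$: the vanishing of $r$ in $\gr^2A$ places $r$ in $A_{[1]}$, and nothing in your argument forces its class in $\gr^1A\cong V\sharp\C$ to vanish. Concretely, for $V_{3,1}$ one computes
\[
\Delta_A(v_1^2)=v_1^2\otimes 1+a^4\otimes v_1^2+\xi(\xi^4-\xi^2)\,ba^3\otimes(v_1v_2-\xi^2v_2v_1),
\]
so the ``positive bidegree'' contribution does not vanish: it produces the \emph{other} quadratic relation, which you have not yet shown to lie in $\C$. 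The two quadratic relations are thus entangled, and your proposed inductive order ($v_1^2$ first, then the commutator, then $v_2^3$) cannot get started via the coproduct alone. There is also no natural ``image of $r$ in the Hopf coradical'': $A_{[0]}\hookrightarrow A$ has no canonical retraction.

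The paper's proof avoids this by working directly in $A_{[1]}$: each relation $r$ is expanded in the basis $\{a^i,\,ba^i,\,xa^i,\,ya^i,\,xba^i,\,yba^i\}$ of $A_{[1]}$, and the commutation identities with $a$ and $b$ (coming from the smash-product formulas, together with previously established relations such as $x^2=0$ when treating $xy-\xi^2yx$) are used to kill \emph{all} coefficients---including those in front of $xa^i,\,ya^i,\,xba^i,\,yba^i$. Only for $y^3$, once the quadratic relations are already in hand, does a coproduct computation suffice: one then finds $\Delta(y^3)=y^3\otimes1+1\otimes y^3$, whence $y^3\in\Pp(A)=0$. Your instinct to exploit the $\C$-module structure is exactly right; it must, however, be applied to $r$ as an element of $A_{[1]}$, not to a hypothetical projection $u\in\C$.
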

\begin{proof}
As before, it suffices to show that the defining relations in $\gr A$ hold in $A$.

Assume that $V\cong V_{3,1}$. $\BN(V_{3,1})\sharp \C$ is generated by $x, y, a, b$ satisfying the relations \eqref{eqDef1} and
\begin{gather*}
 ax+xa=0= bx+xb,\quad  ay+\xi ya=\Lam^{-1}xba^3,\quad by+\xi yb=xa^4,\quad
x^2=y^3=xy-\xi^2yx=0.
\end{gather*}
If the relation $x^2=0$ admits non-trivial deformations, then $x^2\in A_{[1]}$ is a linear combination
of $\{a^i,ba^i, xa^i, ya^i,xba^i,yba^i,\ i\in\I_{0,5}\}$. That is,
\begin{align*}
x^2=\sum\nolimits_{i=0}^5\alpha_i a^i+\beta_i ba^i+ \gamma_i xa^i+\lambda_i xba^i+\mu_i ya^i+\nu_i yba^i,\quad \alpha_i,\beta_i,\gamma_i,\lambda_i,\mu_i,\nu_i\in\K.
\end{align*}
Since $ax^2=x^2a$, it follows by  a direct computation that $0=ax^2-x^2a=$
\begin{align*}
\sum\nolimits_{i=0}^5\beta_i(\xi^5-1) ba^{i+1}-2\gamma_i xa^{i+1}+(\xi^5\mu_{i-2}-\lambda_i)(\xi^5+1) xba^{i+1}-\mu_i(\xi+1) ya^{i+1}
-2\nu_i yba^{i+1},
\end{align*}
which implies that $\beta_i=\gamma_i=\lambda_i=\mu_i=\nu_i=0$ for all $i\in \I_{0,5}$ and hence $x^2=\sum_{i=0}^5\alpha_i a^i$.

Since $\epsilon(x^2)=0$, we have $\sum_{i=0}^5\alpha_i=0$. Since $bx^2=x^2b$ and $ba=\xi ab$, we have $\alpha_i=0$ for all $i\in\I_{1,5}$.  Therefore, the relation $x^2=0$ must hold in $A$.
If $xy-\xi^2yx$ admits non-trivial deformations, then $xy-\xi^2yx\in A_{[1]}$, that is,
\begin{align*}
xy-\xi^2yx=\sum\nolimits_{i=0}^5\alpha_i a^i+\beta_i ba^i+ \gamma_i xa^i+\lambda_i xba^i+\mu_i ya^i+\nu_i yba^i,\quad \alpha_i,\beta_i,\gamma_i,\lambda_i,\mu_i,\nu_i\in\K.
\end{align*}
From the relations $by+\xi yb=xa^4$, $bx=-xb$ and $x^2=0$, we obtain that
\begin{align*}
a(xy-\xi^2yx)&=-\Lam^{-1}\xi x^2ba^3+\xi(xy-\xi^2yx)a=\xi(xy-\xi^2yx)a,\\
b(xy-\xi^2yx)&=-\xi x^2a^4+\xi(xy-\xi^2yx)a=\xi(xy-\xi^2yx)a.
\end{align*}
It follows that $\alpha_i=\beta_i=\gamma_i=\lambda_i=\mu_i=\nu_i=0$ for all $i\in \I_{0,5}$ and hence $xy-\xi^2yx=0$ in $A$.
Finally,  $\Delta(y^3)=\Delta(y)^3=y^3\otimes 1+1\otimes y^3$ and hence $y^3=0$  in $A$. Consequently, $A\cong \gr A$.

The proof for $V\cong V_{3,5}$ follows the same line as for $V\cong V_{3,1}$.
\end{proof}

\begin{lem}\label{lemTwodimNichDeforma2}
Let $A$ be a finite-dimensional Hopf algebra over $\C$ such that $\gr A\cong \BN(V)\sharp\C$, where $V$ is isomorphic either to $V_{2,2}$ or $V_{2,4}$. Then $A\cong \BN(V)\sharp \C$.
\end{lem}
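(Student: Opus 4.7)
The plan is to mimic the approach of Lemma \ref{lemTwodimNichDeforma1}: verify that each of the defining relations of $\BN(V_{2,j+3})$ for $j\in\{1,5\}$ given in Proposition \ref{proV24}, namely $y^2+\xi x^2$, $xy-yx$ and $x^3$, already holds in $A$. Once this is established, $A\cong\gr A=\BN(V_{2,j+3})\sharp\C$ follows. I will sketch the case $V\cong V_{2,2}$ and then note that $V\cong V_{2,4}$ is entirely parallel.

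The first step is to read off the $\C$-module structure on $V=V_{2,2}=\K\{x,y\}$ from Proposition \ref{proYD-2} with $(i,j)=(2,2)$: $a\cdot x=\xi^2 x$, $b\cdot x=0$, $a\cdot y=\xi^3 y$, $b\cdot y=x$. Combining this with the coproducts of $a,b$ in Definition \ref{proStrucOfC} and the smash product formula \eqref{eqSmash}, derive the commutation identities
\begin{align*}
ax=\xi^2 xa,\quad bx=\xi^2 xb,\quad ay=\xi^3 ya+(\xi^4+\xi^5)xba^3,\quad by=xa^4+\xi^3 yb
\end{align*}
in $\BN(V)\sharp\C$, which by naturality of the lifting process give the same identities in $A$ modulo $A_{[1]}$.

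The second step is to treat the relations in order of increasing difficulty. The cubic $x^3$ is the easiest: a direct computation of $\Delta(x^3)=\Delta(x)^3$ using $\delta(x)$ from Proposition \ref{proYD-2} together with the quadratic relations (handled below) collapses to $\Delta(x^3)=x^3\otimes 1+1\otimes x^3$, so $x^3\in\Pp(A)=0$. For a quadratic relation $r\in\{xy-yx,\ y^2+\xi x^2\}$, suppose $r$ admits a non-trivial lift. Then $r\in A_{[1]}$ admits an expansion
\begin{align*}
r=\sum\nolimits_{i=0}^5(\alpha_i a^i+\beta_i ba^i+\gamma_i xa^i+\lambda_i xba^i+\mu_i ya^i+\nu_i yba^i).
\end{align*}
Using the derived commutation rules on both sides of the expansion, compute $ar-ra$ and $br-rb$; since $r$ is constructed so that its genuine degree-two contribution cancels at the graded level, matching coefficients against the basis $\{a^i,ba^i,xa^i,xba^i,ya^i,yba^i\}$ forces every coefficient except possibly the constants $\alpha_i$ to vanish. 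The constants $\alpha_i$ are then killed by $\epsilon(r)=0$ together with $ba=\xi ab$ applied to the identity $br=rb$, exactly as in the proof of Lemma \ref{lemTwodimNichDeforma1}.

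The main technical obstacle will be the non-diagonal commutation $ay=\xi^3 ya+(\xi^4+\xi^5)xba^3$: it injects lower-order correction terms into $ar$ and these must be tracked carefully in the matching of coefficients. The specific scalar $\xi$ in the relation $y^2+\xi x^2$ and the sign in $xy-yx$ are precisely what make those correction terms cancel, so the coefficient-matching argument ultimately succeeds. The case $V\cong V_{2,4}$ is obtained by replacing $(i,j)=(2,2)$ with $(2,4)$ throughout; the scalars change but the structure of the argument is identical.
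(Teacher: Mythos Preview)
Your plan follows Lemma \ref{lemTwodimNichDeforma1} too closely, and the analogy breaks at the quadratic relations. For $V_{2,2}$ one has $a(xy-yx)=\xi^{5}(xy-yx)a$ and $b(xy-yx)=\xi^{5}(xy-yx)b$; comparing with a generic element of $A_{[1]}$ kills $\alpha_i,\gamma_i,\lambda_i,\mu_i,\nu_i$ but leaves the $\beta_i$ (not the $\alpha_i$), so $xy-yx=\sum_i\beta_i ba^i$. At this point neither $\epsilon$ nor the $b$-commutation helps: $\epsilon(ba^i)=0$ for all $i$, and both $b\cdot\sum_i\beta_iba^i$ and $(\sum_i\beta_iba^i)\cdot b$ vanish because $b^2=0$, so the identity $br=\xi^{5}rb$ becomes $0=0$. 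Thus the $\beta_i$ cannot be eliminated by the toolkit you invoke. Similarly, for $y^2+\xi x^2$ one finds $a(y^2+\xi x^2)=(y^2+\xi x^2)a$, which leaves a residual $\sum_i\tau_ia^i$; the relation $b(y^2+\xi x^2)-(y^2+\xi x^2)b=(xy-yx)a^4$ then only links the $\tau_i$ to the unknown $\beta_i$ rather than forcing either to vanish.

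The missing ingredient is the comultiplication. Computing $\Delta(y^2+\xi x^2)$ from $\Delta(x)$ and $\Delta(y)$ yields
\[
\Delta(y^2+\xi x^2)=(y^2+\xi x^2)\otimes 1+a^{2}\otimes(y^2+\xi x^2)-ba\otimes(xy-yx),
\]
and substituting $y^2+\xi x^2=\sum_i\tau_i a^i$ and $xy-yx=\sum_i\beta_i ba^i$ into this identity forces all $\tau_i$ and $\beta_i$ to be zero simultaneously. Only after the quadratic relations are established in $A$ does $\Delta(x^3)=x^3\otimes1+1\otimes x^3$ follow, so your ordering of the argument (treating $x^3$ first ``using the quadratic relations handled below'') should also be reversed.
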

\begin{proof}
Assume that $V\cong V_{2,2}$. $\BN(V_{2,2})\sharp \C$ is generated by $x, y, a, b$ satisfying the relations \eqref{eqDef1} and
\begin{gather*}
ax=\xi^2xa,\quad bx=\xi^2xb,\quad ay+ya=\Lam^{-1}xba^3,\quad by+yb=xa^4,\quad
 y^2+\xi x^2=x^3= xy-yx=0.
\end{gather*}
If $xy-yx$ admits non-trivial deformations, then $xy-yx\in A_{[1]}$, that is,  $xy-yx=\sum_{i=0}^5\alpha_i a^i+\beta_i ba^i+ \gamma_i xa^i+\lambda_i xba^i+\mu_i ya^i+\nu_i yba^i$ for some $\alpha_i$, $\beta_i$, $\gamma_i$, $\lambda_i$ $\mu_i$, $\nu_i\in\K$.
Since $a(xy-yx)=\xi^5(xy-yx)a$ and $b(xy-yx)=\xi^5(xy-yx)b$,  a direct computation shows that
\begin{align*}
\alpha_i=\gamma_i=\lambda_i=\mu_i=\nu_i=0,\text{ which implies that } xy-yx=\sum\nolimits_{i=0}^5\beta_i ba^i.
\end{align*}
Moreover, we have $a(y^2+\xi x^2)=(y^2+\xi x^2)a+\Lam^{-1}(xy-yx)ba^3=(y^2+\xi x^2)a$, which implies that $y^2+\xi x^2=\sum_{i=0}^5\tau_ia^i$ for some $\tau_i\in\K$.

As $\De(x)=x\otimes 1+a^4\otimes x+(1-\xi^2)ba^3\otimes y$ and $\De(y)=y\otimes 1+a\otimes y+\xi^5b\otimes x$, we have
$\Delta(y^2+\xi x^2)=(y^2+\xi x^2)\otimes 1+a^2\otimes (y^2+\xi x^2)-ba\otimes (xy-yx)$, which implies that
\begin{gather*}
\Delta(\sum\nolimits_{i=0}^5\tau_ia^i)=(\sum\nolimits_{i=0}^5\tau_ia^i)\otimes 1+a^2\otimes (\sum\nolimits_{i=0}^5\tau_ia^i)-ba\otimes (\sum\nolimits_{i=0}^5\beta_i ba^i).
\end{gather*}
It follows by a direct computation that $\beta_i=0=\tau_i$ for $i\in\I_{0,5}$. Hence $y^2+\xi x^2=0=xy-yx$ in $A$.  Finally, $\Delta(x^3)=x^3\otimes 1+1\otimes x^3$, which implies that $x^3=0$  in $A$. Consequently, $A\cong\text{gr}\,A$.

 The proof for $V\cong V_{2,4}$ follows the same line as for $V\cong V_{2,2}$.
\end{proof}

\begin{lem}\label{lemTwodimNichDeforma3}
Let $A$ be a finite-dimensional Hopf algebra over $\C$ such that $\gr A\cong \BN(V)\sharp\C$, where $V$ is isomorphic either to $V_{1,1}$, $V_{1,5}$, $V_{4,2}$ or $V_{4,4}$. Then $A\cong \BN(V)\sharp \C$.
\end{lem}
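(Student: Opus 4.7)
The plan is to argue exactly as in Lemmas \ref{lemOnedimNichDeforma}, \ref{lemTwodimNichDeforma1} and \ref{lemTwodimNichDeforma2}: it suffices to show that every defining relation of $\BN(V)\sharp\C$ survives in $A$. I will carry out the argument for $V\cong V_{1,1}$, the remaining three cases being completely analogous after the obvious relabellings (note in particular that the duality $V_{1,1}\As\cong V_{4,2}$ and $V_{1,5}\As\cong V_{4,4}$ from Remark \ref{rmkDmoddual} gives a symmetry that can be exploited).

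By Proposition \ref{proV11} the relations to be lifted are
\begin{gather*}
r_1:=x^2y+\xi yx^2+(1+\xi)xyx,\qquad r_2:=x^3+y^2x+xy^2+yxy,\\ r_3:=x^2y+yx^2+xyx+y^3,\qquad r_4:=x^6.
\end{gather*}
The first step is to treat the three cubic relations $r_1,r_2,r_3$. Since $\gr A$ kills each $r_i$ in degree three and the filtration of $A$ is standard, any potential lift lies in $A_{[1]}$, so I would write
\begin{align*}
r_i=\sum_{k=0}^{5}\bigl(\alpha_k^{(i)} a^k+\beta_k^{(i)} ba^k+\gamma_k^{(i)} xa^k+\lambda_k^{(i)} xba^k+\mu_k^{(i)} ya^k+\nu_k^{(i)} yba^k\bigr).
\end{align*}
The next step is to compute, using the smash product formula \eqref{eqSmash} together with the Yetter--Drinfeld structure of Proposition \ref{proYD-2} and the $\C$-coaction on $r_i$ recorded inside the proof of Lemma \ref{lemGeneration}(4), the left multiplication of $r_i$ by $a$ and by $b$. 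The $a$-action is diagonal and in each case fixes the $\xi$-eigenvalue of $r_i$, so comparing homogeneous components in the equation $a\cdot r_i=\chi_i(a)\,r_i\cdot a$ will force most of the scalars $\alpha_k^{(i)},\dots,\nu_k^{(i)}$ to vanish and cut the ansatz down to a very short expression. The $b$-action relation $b\cdot r_i$ then ties the remaining scalars among $r_1,r_2,r_3$ together through the same linear combinations that appeared in the coaction formulas, and a final use of the counit forces the group-like part to vanish.

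Once the cubic relations are established in $A$, I turn to $r_4=x^6$. Here the key observation (the usual one in this series of lemmas) is that after $r_1,r_2,r_3$ have been killed, the braided comultiplication of $x^6$ in the Nichols algebra collapses to the primitive form $\Delta_{\BN(V)}(x^6)=x^6\otimes 1+1\otimes x^6$, and with the coaction $\delta(x^6)=\delta(x)^6=1\otimes x^6$ the smash-product formula \eqref{eqSmash} gives
\begin{align*}
\Delta(x^6)=x^6\otimes 1+1\otimes x^6
\end{align*}
in $A$. Hence $x^6\in\Pp(A)=0$ and $r_4=0$ in $A$, giving $A\cong\BN(V)\sharp\C$.

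The one genuine obstacle is the $a$- and $b$-twist bookkeeping needed to constrain the $A_{[1]}$-ansatz for $r_1,r_2,r_3$ simultaneously: unlike the cases in Lemmas \ref{lemTwodimNichDeforma1}--\ref{lemTwodimNichDeforma2} where a single cubic relation was involved, here the three cubics have overlapping $\C$-coactions (for instance $\delta(r_1)=1\otimes r_1+2\xi^2(1+\xi)ba^{-1}\otimes r_2$ and $\delta(r_3)=1\otimes r_3+2\xi^2 ba^5\otimes r_2$ in the $j=1$ subcase computed in Lemma \ref{lemGeneration}(4)), so one must kill $r_2$ first and then use its vanishing to simplify $\delta(r_1),\delta(r_3)$ before concluding. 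Apart from this ordering, the calculations are routine variants of those carried out in Lemmas \ref{lemTwodimNichDeforma1} and \ref{lemTwodimNichDeforma2}.
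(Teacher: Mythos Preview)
Your overall strategy is right, but there are two concrete problems with the execution.

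\textbf{The ansatz is in the wrong filtration piece.} You write each cubic relation $r_i$ as an element of $A_{[1]}$. Since $r_i$ is a degree-three expression in $x,y$ and vanishes in $\gr^3A$, what you actually know is $r_i\in A_{[2]}$, so your ansatz must also allow the quadratic monomials $x^2a^k,\;xya^k,\;yxa^k,\;y^2a^k$ and their $b$-twisted versions. The paper is explicit about this: the step that forces $r_1=X=0$ is called ``a tedious computation on $A_{[2]}$''. With only $A_{[1]}$ terms written down, your $a$- and $b$-commutation equations will not even be well-posed.

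\textbf{The order is backwards.} You propose to kill $r_2$ first and then simplify $r_1,r_3$. But once you compute correctly, $r_2=Y$ is $(1,a^3)$-skew primitive, hence lies in $\Pp_{1,a^3}(\C)=\K\{1-a^3,\,ba^2\}$; the relation $aY=\xi^5 Ya$ kills the $(1-a^3)$-part, but $ba^2$ survives both the $a$- and $b$-commutation tests (and the counit), so you cannot conclude $Y=0$ at this stage. The paper instead carries $Y=\alpha_2 ba^2$ forward, substitutes it into
\[
\Delta(X)=X\otimes 1+1\otimes X+2(\xi^2-1)\,ba^5\otimes Y,
\]
and then does the genuine $A_{[2]}$ computation for $X$ using $aX=\xi^4Xa$ and $bX=\xi^4Xb$ to obtain $X=0$. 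Feeding $X=0$ back into the displayed formula for $\Delta(X)$ forces $\alpha_2=0$, so $Y=0$, and then $Z=0$ follows from $\Delta(Z)=Z\otimes1+1\otimes Z+2\xi^2ba^5\otimes Y$. In short, the logical flow is: constrain $Y$ partially $\rightarrow$ kill $X$ via the $A_{[2]}$ ansatz $\rightarrow$ finish off $Y$ via $\Delta(X)$ $\rightarrow$ deduce $Z=0$.

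A minor point: the information you quote from Lemma~\ref{lemGeneration}(4) is the $\C$-coaction $\delta$ on the $r_i$, whereas what is needed here is the full Hopf comultiplication $\Delta$ in $A$ (computed from the smash-product formula~\eqref{eqSmash}); the two are related but not interchangeable.

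Your treatment of $x^6$ at the end is correct and agrees with the paper.
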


\begin{proof}
Assume that $V\cong V_{1,1}$. $\BN(V_{1,1})\sharp\C$ is generated by $a,b,x,y$ satisfying the relations \ref{eqDef1} and
\begin{gather}
ax=\xi xa,\quad bx=\xi xb,\quad
ay-\xi^2ya=\Lam^{-1}xba^3,\quad by-\xi^2yb=xa^4,\quad x^6=0, \label{eq11-1-1} \\
x^2y+\xi yx^2+(1+\xi)xyx=0,\quad x^3+y^2x+xy^2+yxy=0,\quad
x^2y+yx^2+xyx+y^3=0.\label{eq11-1-2}
\end{gather}
Let $X:=x^2y+\xi yx^2+(1+\xi)xyx$, $Y:=x^3+y^2x+xy^2+yxy$ and $Z:=x^2y+yx^2+xyx+y^3$ for simplicity.
As $\Delta(x)=x\otimes 1+a^5\otimes x+(\xi^2-1)ba^4\otimes y$ and $\Delta(y)=y\otimes 1+a^2\otimes y+(\xi^2+1)ba\otimes x$,
\begin{gather*}
\Delta(X)=X\otimes 1+1\otimes X+2(\xi^2-1)ba^5\otimes Y,\\ \Delta(Y)=Y\otimes 1+a^3\otimes Y,\quad
\Delta(Z)=Z\otimes 1+1\otimes Z+2\xi^2ba^5\otimes Y.
\end{gather*}
It follows that $Y\in\Pp_{1,a^3}(A)=\Pp_{1,a^3}(\C)=\K\{1-a^3,ba^2\}$, that is, $Y=\alpha_1(1-a^3)+\alpha_2ba^2$ for some $\alpha_1,\alpha_2\in\K$. Since $aY=\xi^5Ya$, it follows that $\alpha_1=0$ and hence $Y=\alpha_2ba^5$. Therefore,
\begin{align}
\Delta(X)=X\otimes 1+1\otimes X+2(\xi^2-1)ba^5\otimes \alpha_2ba^5.\label{eqV112}
\end{align}
By \eqref{eq11-1-1} \eqref{eq11-1-2}, we have $aX=\xi^4 Xa$ and $bX=\xi^4Yb$. Similar to the proof of Lemma \ref{lemTwodimNichDeforma2},  a tedious computation on $A_{[2]}$ shows that $X=0$ in $A$ and hence the equation \eqref{eqV112} holds  only if $\alpha_2=0$.   Here we omit calculation details to save space. Therefore,  $Y=0=Z$ in $A$.
Finally,  $\Delta(x^6)=\Delta(x)^6=x^6\otimes 1+1\otimes x^6$, which implies that $x^6=0$ in $A$. Consequently, $A\cong\gr A$.

The proof for $V\cong V_{1,5}$, $V_{4,2}$ or $V_{4,4}$ follows the same line as for $V\cong V_{1,1}$.
\end{proof}

\begin{lem}
Let $A$ be a finite-dimensional Hopf algebra over $\C$ such that $\gr A\cong \BN(V)\sharp\C$, where $V$ is isomorphic either to $V_{4,1}$ or $V_{4,5}$. Then $A\cong \BN(V)\sharp \C$.
\end{lem}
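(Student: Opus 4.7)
The plan is to imitate the template of Lemmas \ref{lemTwodimNichDeforma2} and \ref{lemTwodimNichDeforma3}: show that each defining relation of $\BN(V)\sharp\C$ admits no non-trivial deformation in $A$ by means of its (skew-)primitivity modulo lower-degree relations. I argue for $V\cong V_{4,1}$; the case $V\cong V_{4,5}$ is completely analogous and differs only by a substitution $\xi\mapsto \xi^5$ in the scalars.

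Denote the four defining relations of $\BN(V_{4,1})$ from Proposition \ref{proV41} by
\begin{gather*}
U:=v_1^3,\qquad X:=\xi\, v_1^{2}v_2+\xi^{5}v_2v_1^{2}+v_1v_2v_1,\\
Y:=v_2^{3}-v_1^{2}v_2-v_2v_1^{2}+v_1v_2v_1,\qquad Z:=v_2^{2}v_1+v_1v_2^{2}-v_2v_1v_2.
\end{gather*}
First I would use the bosonization formula \eqref{eqSmash} together with the Yetter--Drinfeld structure of $V_{4,1}$ from Proposition \ref{proYD-2} to write down $\Delta(v_1)$, $\Delta(v_2)$ in $\BN(V_{4,1})\sharp\C$, as well as the straightening relations $a v_i=\cdots$ and $bv_i=\cdots$ in the smash product. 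From these, a direct but tedious computation gives $\Delta(X)$, $\Delta(Y)$, $\Delta(Z)$ and the $a$-, $b$-conjugation rules for each of these cubic expressions in $A$.

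The expected pattern, by analogy with Lemma \ref{lemTwodimNichDeforma3}, is that one of the cubic elements, say $Y$, is genuinely $(1,a^{3})$-skew-primitive: $\Delta(Y)=Y\otimes 1+a^{3}\otimes Y$. Hence $Y\in \Pp_{1,a^{3}}(A)=\Pp_{1,a^{3}}(\C)=\K\{1-a^{3},ba^{2}\}$. The $a$-conjugation rule $aY=\lambda\, Ya$ for some power $\lambda$ of $\xi$ then rules out the $1-a^{3}$ summand and pins $Y$ down to a scalar multiple of the unique $ba^{k}$ of matching weight. The coproducts of $X$ and $Z$ will take the form
\begin{align*}
\Delta(X)&=X\otimes 1+g_X\otimes X+\gamma_1\, b a^{\bullet}\otimes Y,\\
\Delta(Z)&=Z\otimes 1+g_Z\otimes Z+\gamma_2\, b a^{\bullet}\otimes Y,
\end{align*}
for some group-likes $g_X,g_Z$ and nonzero scalars $\gamma_1,\gamma_2$. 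Expanding $X,Z\in A_{[2]}$ in the linear basis of $A_{[2]}$ and comparing with the $a$- and $b$-actions, as in Lemma \ref{lemTwodimNichDeforma3}, produces a linear system whose only solution forces the scalar in $Y$ to vanish, so $Y=0$ in $A$, and then successively $X=0$ and $Z=0$.

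Once $X=Y=Z=0$ in $A$, a direct computation shows $\Delta(v_1^{3})=v_1^{3}\otimes 1+1\otimes v_1^{3}$, and since $\Pp(A)=0$ we obtain $v_1^{3}=0$. Hence all defining relations of $\BN(V_{4,1})\sharp\C$ hold in $A$, and counting dimensions gives $A\cong \BN(V)\sharp\C$. The main obstacle is the bookkeeping in the cubic computation: the coproducts of $X$, $Y$, $Z$ couple to one another via scalar multiples of $ba^{\bullet}\otimes Y$ and $ba^{\bullet}\otimes(\text{another cubic})$, so one has to identify the genuinely skew-primitive relation first and then order the vanishing arguments for $X$ and $Z$ so that each step uses only relations that have already been established, avoiding circular dependence.
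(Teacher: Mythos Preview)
Your overall strategy matches the paper's, but your specific predictions about which cubic elements are skew-primitive versus primitive are inverted, and this matters. In the paper's actual computation for $V_{4,1}$, both $X$ and $Y$ turn out to be genuinely primitive: $\Delta(X)=X\otimes 1+1\otimes X$ and $\Delta(Y)=Y\otimes 1+1\otimes Y$. Since $\Pp(A)=0$ for a finite-dimensional Hopf algebra, $X=Y=0$ immediately, with no $A_{[2]}$-basis expansion needed. It is $v_1^{3}$ and $Z$ that are $(1,a^{3})$-skew-primitive modulo $X$ and $Y$: one has $\Delta(v_1^{3})=v_1^{3}\otimes 1+a^{3}\otimes v_1^{3}+\xi^{2}ba^{2}\otimes X$ and $\Delta(Z)=Z\otimes 1+a^{3}\otimes Z+2\xi^{2}ba^{2}\otimes Y+\xi^{2}ba^{2}\otimes X$. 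After $X=Y=0$ these lie in $\Pp_{1,a^{3}}(\C)=\K\{1-a^{3},ba^{2}\}$, and the conjugation rules $av_1^{3}=v_1^{3}a$, $bv_1^{3}=v_1^{3}b$, $aZ=\xi^{2}Za$ force both to vanish.

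In particular, your claim that once $X=Y=Z=0$ one has $\Delta(v_1^{3})=v_1^{3}\otimes 1+1\otimes v_1^{3}$ is incorrect: $v_1^{3}$ is $(1,a^{3})$-skew-primitive here, not primitive, and you need the $a$-, $b$-commutation argument to conclude. So the genuine proof is both simpler than the Lemma~\ref{lemTwodimNichDeforma3} template you anticipated (no tedious $A_{[2]}$ expansion at all) and structured differently from your sketch; carrying out the coproduct computations first, rather than guessing the pattern by analogy, would have revealed this.
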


\begin{proof}
Assume that $V\cong V_{4,1}$. $\BN(V_{4,1})\sharp\C$ is generated by $a,b,x,y$ satisfying the relations \eqref{eqDef1} and
\begin{gather}
ax=\xi^4xa,\quad bx=\xi^4xb,\quad
ay-\xi^5ya=(\xi^4+\xi^5)xba^3,\quad by-\xi^5yb=xa^4,\label{eqV41-1}\\
x^3=0,\quad \xi x^2y+\xi^5yx^2+xyx=0,\quad y^3-x^2y-yx^2+xyx=0,\quad y^2x+xy^2-yxy=0.\label{eqV41-2}
\end{gather}
Let $X:=\xi x^2y+\xi^5yx^2+xyx$, $Y:=y^3-x^2y-yx^2+xyx$ and $Z:=y^2x+xy^2-yxy$.
As $\Delta(x)=x\otimes 1+a^5\otimes x+\xi ba^4\otimes y$ and $\Delta(y)=y\otimes 1+a^2\otimes y+(\xi^2-1)ba\otimes x$, we have
\begin{align*}
\Delta(x^3)&=x^3\otimes 1+a^3\otimes x^3+\xi^2ba^2\otimes X ,\quad
\Delta(X)=X\otimes 1+1\otimes X,\\
\Delta(Y)&=Y\otimes 1+1\otimes Y,\quad
\Delta(Z)=Z\otimes 1+a^3\otimes Z+2\xi^2ba^2\otimes Y+\xi^2ba^2\otimes X.
\end{align*}
It follows that $X=0=Y$ in $A$ and hence $x^3,\; Z\in\Pp_{1,a^3}(A)=\Pp_{1,a^3}(\C)$, that is, $x^3=\alpha_1(1-a^3)+\alpha_2ba^2$ and $Z=\beta_1(1-a^3)+\beta_2ba^2$
for some $\alpha_1,\alpha_2,\beta_1,\beta_2\in\K$. Since $ax^3=x^3a$, $bx^3=x^3b$ and $aZ=\xi^2Za$, it follows that $\alpha_1=0=\alpha_2$ and $\beta_1=0=\beta_2$, which implies that  $x^3=0=Z$ in $A$. Consequently, $A\cong\gr A$.

The proof for $V\cong V_{4,5}$ follows the same line as for $V\cong V_{4,1}$.
\end{proof}

Now we define two families of Hopf algebras $\mathfrak{B}_{1,2}(\mu)$ and $\mathfrak{B}_{1,4}(\mu)$ and show that they are liftings of $\BN(V_{1,2})$ and $\BN(V_{1,4})$ over $\C$.

\begin{defi}\label{defiB12}
For $\mu\in\K$ and $j\in\{1,5\}$, let $\mathfrak{B}_{1,j+3}(\mu)$ be the algebra generated by $a$, $b$, $x$, $y$ satisfying
\begin{gather*}
a^6=1,\quad b^2=0,\quad ba=\xi ab, \quad ax=\xi xa,\quad bx=\xi xb,\quad ay-\xi^2ya=\Lam^{-1}xba^3,\\ by-\xi^2yb=xa^4,\quad
x^3=0,\quad \xi^{2j} x^2y+\xi^{4j} xyx+yx^2=0,\\
y^3+x^2y+yx^2+xyx=\mu(1-a^3),\quad y^2x+xy^2+yxy=-2\xi^2\mu ba^5.
\end{gather*}
\end{defi}

$\mathfrak{B}_{1,j+3}(\mu)$ admits a Hopf algebra structure whose coalgebra structure is given by \eqref{eqDef} and
\begin{gather}
\begin{split}\label{eqDef-1214}
\Delta(x)=x\otimes 1+a^{-j-3}\otimes x+(\xi^2-\xi^{2+j})ba^{-j-4}\otimes y,\\
\Delta(y)=y\otimes 1+a^{-j}\otimes y+(\xi^2+\xi^{j+2})ba^{-j-1}\otimes x.
\end{split}
\end{gather}
\begin{rmk}\label{rmkB12B14}
It is easy to see that $\mathfrak{B}_{1,j+3}(0)\cong \BN(V_{1,j})\sharp\C$ and $\mathfrak{B}_{1,j+3}(0)\not\cong\mathfrak{B}_{1,j+3}(\mu)$ with $\mu\neq 0$. It is an open question to determine the isomorphism classes of $\mathfrak{B}_{1,j+3}(\mu)$.
\end{rmk}
\begin{lem}\label{lemDim}
A linear basis of $\mathfrak{B}_{1,j+3}(\mu)$ for $j\in\{1,5\}$  is given by $\{y^i(xy)^jx^kb^la^m,\ i,k\in\I_{0,2},\ j,l\in\I_{0,1},\ m\in\I_{0,5}\}$. In particular, $\dim \mathfrak{B}_{1,j+3}(\mu)=216$.
\end{lem}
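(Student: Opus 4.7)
The plan is to sandwich $\dim \mathfrak{B}_{1,j+3}(\mu)$ between the cardinality of the proposed basis and a filtration-based upper bound, as is standard for liftings. First I would equip $\mathfrak{B}_{1,j+3}(\mu)$ with the algebra filtration where $\deg a = \deg b = 0$ and $\deg x = \deg y = 1$. Each defining relation is then filtered: the commutation relations $ay - \xi^2 ya = \Lam^{-1} x b a^3$ and $by - \xi^2 yb = x a^4$ have both sides of degree $\leq 1$, the relations $ax = \xi xa$, $bx = \xi xb$, $ba = \xi ab$ are homogeneous, $a^6 = 1$ and $b^2 = 0$ are of degree $0$, the relations $x^3 = 0$ and $\xi^{2j} x^2 y + \xi^{4j} x y x + y x^2 = 0$ are homogeneous of degree $3$, and the two $\mu$-deformed relations have leading parts of degree $3$ with right-hand sides $\mu(1 - a^3)$ and $-2\xi^2 \mu b a^5$ living in degree $0$.

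Second, passing to the associated graded algebra $\gr \mathfrak{B}_{1,j+3}(\mu)$ kills the $\mu$-corrections, and the leading parts of the defining relations recover exactly the relations presenting $\BN(V_{1,j+3}) \sharp \C$, as read off from \eqref{eqSmash}, \eqref{eq12-1} and \eqref{eq12-2} together with the Hopf algebra structure of $\C$. Hence there is a surjection $\BN(V_{1,j+3}) \sharp \C \twoheadrightarrow \gr \mathfrak{B}_{1,j+3}(\mu)$, and Proposition \ref{proNi14} gives $\dim \BN(V_{1,j+3}) = 18$, so $\dim \mathfrak{B}_{1,j+3}(\mu) \leq 18 \cdot 12 = 216$.

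Third, I would show that the proposed set $B = \{y^i(xy)^j x^k b^l a^m : i, k \in \I_{0,2},\ j, l \in \I_{0,1},\ m \in \I_{0,5}\}$ spans $\mathfrak{B}_{1,j+3}(\mu)$ by a reduction argument. The commutation-type relations, together with $a^6 = 1$ and $b^2 = 0$, let one push every $a$ and $b$ to the right end of any monomial at the cost of producing shorter monomials in $x, y$ (the $\mu$-corrections from the cubic relations already lie in $\C$ and merge into the $b^l a^m$-tail). For the $x, y$-part, the relation $\xi^{2j} x^2 y + \xi^{4j} xyx + yx^2 = 0$ eliminates $yx^2$ in favour of $x^2 y$ and $xyx$, while $x^3 = 0$ bounds the $x$-exponent; the remaining cubic relations reduce $y^3$, $y^2 x$, $x y^2$ and $y x y$ modulo elements of $\C$. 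An inductive argument on total degree then shows that every monomial in $x, y$ reduces to a linear combination of $y^i (xy)^j x^k$ with $(i, j, k) \in \I_{0,2} \times \I_{0,1} \times \I_{0,2}$, mirroring the basis of $\BN(V_{1,j+3})$ obtained in Proposition \ref{proNi14}. Since $|B| = 3 \cdot 2 \cdot 3 \cdot 2 \cdot 6 = 216$ matches the upper bound, $B$ is automatically a basis.

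The main obstacle is the bookkeeping in the reduction step — in particular, verifying that the $\mu$-deformed relations can be applied consistently without introducing terms outside the claimed span, and confirming that the associated graded algebra is indeed all of $\BN(V_{1,j+3}) \sharp \C$ rather than a proper quotient. This is handled because all $\mu$-correction terms lie in $\C$ and the counting of the spanning set saturates the upper bound, forcing the surjection onto $\gr \mathfrak{B}_{1,j+3}(\mu)$ to be an isomorphism.
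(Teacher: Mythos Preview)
Your argument supplies two proofs that $\dim \mathfrak{B}_{1,j+3}(\mu) \leq 216$ but no lower bound. Showing that $B$ spans gives $\dim \leq |B| = 216$; the surjection $\BN(V_{1,j+3})\sharp\C \twoheadrightarrow \gr\mathfrak{B}_{1,j+3}(\mu)$ gives $\dim\gr\mathfrak{B}_{1,j+3}(\mu) \leq 216$, hence again $\dim \leq 216$. These are the \emph{same} inequality. The sentence ``the counting of the spanning set saturates the upper bound, forcing the surjection onto $\gr\mathfrak{B}_{1,j+3}(\mu)$ to be an isomorphism'' is circular: to conclude that the surjection is an isomorphism you need $\dim\gr\mathfrak{B}_{1,j+3}(\mu) = 216$, i.e.\ $\dim\mathfrak{B}_{1,j+3}(\mu) \geq 216$, which is precisely what remains to be proved. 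Nothing you have written rules out the possibility that the defining relations of $\mathfrak{B}_{1,j+3}(\mu)$ force hidden collapses, making the algebra strictly smaller than $216$.

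The paper closes this gap with Bergman's Diamond Lemma: it fixes the order $y < xy < x < b < a$ (treating $z := xy$ as a generator) and checks that every overlap ambiguity --- for instance $\{(ay)y^2, a(y^3)\}$ and $\{(by)y^2, b(y^3)\}$, where the $\mu$-terms genuinely enter --- is resolvable. This yields linear independence of the reduced monomials and hence $\dim = 216$ in one stroke. Your spanning argument in step three is essentially the reduction half of the Diamond Lemma; what is missing is the confluence (resolvability) check, or some substitute for it such as a flat-family argument over $\K[\mu]$ or an explicit $216$-dimensional representation of $\mathfrak{B}_{1,j+3}(\mu)$.
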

\begin{proof}
We prove the lemma by the Diamond Lemma \cite{B} with the order $y<xy<x<b<a$. Let $z:=xy$ for simplicity. It suffices to show that all overlaps ambiguities are resolvable, that is, the ambiguities can be reduced to the same expression by different substitution rules.
We show that the overlaps  $\{(ay)y^2,a(y^3)\}$ and $\{(by)y^2,b(y^3)\}$ are resolvable:
\begin{align*}
(ay)y^2&=(\xi^2ya+(\xi^4+\xi^5)xba^3)y^2
=\xi^4y^2ay+(1+\xi)(xy+yx)ba^3y+(\xi^4+\xi^5)x^2ay\\
&=y^3a+(\xi^2-1)y^2xba^3+(\xi^2-1)(xy^2+yxy)ba^3+(1+\xi)(xyx+yx^2)a+(1+\xi)x^2ya\\
&=\mu(1-{}a^3)a+(\xi^2+\xi^3)(xy^2+yxy+y^2x)ba^3+\xi(xyx+yx^2+x^2y)a\\
&=\mu(1-{}a^3)a+\xi(xyx+yx^2+x^2y)a=\mu a(1-{}a^3)-ax^2y-ayx^2-axyx=a(y^3);
\end{align*}
\vspace{-0.5cm}
\begin{align*}
(by)y^2&=(\xi^2yb+xa^4)y^2
=y^3b+(1+\xi)(x^2y+xyx+yx^2)b+\xi^4(y^2x+xy^2+yxy)a^4\\
&=\mu(1-{}a^3)b+\xi(x^2y+xyx+yx^2)b+\xi^4(y^2x+xy^2+yxy)a^4\\
&=\mu b(1-{}a^3)+\xi(x^2y+xyx+yx^2)b=b(y^3).
\end{align*}
One can also show that the remaining overlaps $\{x^3x^r,x^rx^3\},\quad\{y^3y^r,y^ry^3\},\quad \{z^2z^h,z^hz^2\},$
\begin{gather*}
\{(ax)x^2, a(x^3)\},\quad \{(bx)x^2, b(x^3)\}, \quad
\{(az)z,a(z^2)\}, \quad \{(bz)z,b(z^2)\},\quad
\{(xz)z, x(z^2)\},\\ \{(x^3)z, x^2(xz)\},\quad \{(x^3)y,x^2(xy)\},\quad
\{(x(y^3),(xy)y^2\}, \quad \{z(y^3), (zy)y^2\},\quad
\{(z^2)y, z(zy)\},
\end{gather*}
are resolvable. We omit the details, since it is tedious but straightforward.
\end{proof}

\begin{pro}
$\gr\mathfrak{B}_{1,j+3}(\mu)\cong \BN(V_{1,j+3})\sharp\C$ for $j\in\{1,5\}$.
\end{pro}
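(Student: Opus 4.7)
The plan is to endow $\mathfrak{B}:=\mathfrak{B}_{1,j+3}(\mu)$ with a Hopf algebra filtration $\mathcal{F}_\bullet$ whose associated graded is $\BN(V_{1,j+3})\sharp\C$, and then identify $\mathcal{F}_\bullet$ with the standard filtration by a dimension argument.

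First I would assign $a,b$ degree $0$ and $x,y$ degree $1$, setting $\mathcal{F}_n\mathfrak{B}$ to be the span of the PBW monomials of Lemma \ref{lemDim} of total $\{x,y\}$-length at most $n$. The homogeneous defining relations of Definition \ref{defiB12} respect this filtration trivially, while the two non-homogeneous relations
\begin{align*}
y^3+x^2y+yx^2+xyx &= \mu(1-a^3),\\
y^2x+xy^2+yxy &= -2\xi^2\mu ba^5
\end{align*}
equate a degree-$3$ expression with a correction of degree $\leq 1$, which is legitimate for an algebra filtration. A computation using \eqref{eqDef} and \eqref{eqDef-1214} then gives $\Delta(\mathcal{F}_n\mathfrak{B})\subseteq\sum_{p+q=n}\mathcal{F}_p\mathfrak{B}\otimes\mathcal{F}_q\mathfrak{B}$, so $\mathcal{F}_\bullet$ is a Hopf algebra filtration with $\mathcal{F}_0=\C$.

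Next, the algebra map $\pi:\mathfrak{B}\twoheadrightarrow\C$ killing $x$ and $y$ is a Hopf retraction of the inclusion $\C\hookrightarrow\mathfrak{B}$, and descends to $\gr^{\mathcal{F}}\mathfrak{B}$; Radford's theorem then gives $\gr^{\mathcal{F}}\mathfrak{B}\cong R\sharp\C$ with $R$ a connected graded Hopf algebra in $\CYD$ and $R(1)=\K\{\bar x,\bar y\}$. A direct calculation using \eqref{eQ-sh} identifies $R(1)\cong V_{1,j+3}$ in $\CYD$, exactly as in Proposition \ref{proYD-2}. Since the $\mu$-corrections have strictly lower degree, the four defining relations of $\BN(V_{1,j+3})$ from Proposition \ref{proNi14} hold in $R$, producing a surjection $\BN(V_{1,j+3})\sharp\C\twoheadrightarrow\gr^{\mathcal{F}}\mathfrak{B}$. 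By Lemma \ref{lemDim}, both sides have dimension $216=18\cdot 12$, so the surjection is an isomorphism of graded Hopf algebras.

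Finally, since $\mathcal{F}_\bullet$ is a coalgebra filtration with $\mathcal{F}_0=\C$, the coradical of $\mathfrak{B}$ lies in $\C$, forcing $\mathfrak{B}_0=\C_0$ and hence $\mathfrak{B}_{[0]}=\C$. The standard inclusion $\mathfrak{B}_{[n]}\subseteq\mathcal{F}_n$ (by minimality of the standard filtration among Hopf algebra filtrations starting at the Hopf coradical) together with the dimension equality just established forces $\mathcal{F}_\bullet=\mathfrak{B}_{[\bullet]}$ level by level, giving the claim. The main technical obstacle is verifying that $\mathcal{F}_\bullet$ is a coalgebra filtration: because $\Delta(x)$ and $\Delta(y)$ in \eqref{eqDef-1214} each contain a cross-term involving the other generator, compatibility with arbitrary products of PBW monomials must be checked by induction on word length rather than read off any single defining relation.
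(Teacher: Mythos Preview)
Your approach shares with the paper the use of an auxiliary degree filtration $\mathcal{F}_\bullet$ with $\mathcal{F}_0=\C$ to control the coradical, but two steps fail as written. First, for $\mu\neq 0$ there is no algebra map $\pi:\mathfrak{B}\to\C$ sending $x,y\mapsto 0$: applying such a $\pi$ to the relation $y^3+x^2y+yx^2+xyx=\mu(1-a^3)$ would give $0=\mu(1-a^3)$ in $\C$. The repair is immediate---Radford applies directly to the graded Hopf algebra $\gr^{\mathcal{F}}\mathfrak{B}$, where projection onto degree $0$ is a genuine Hopf retraction onto $\C$---but the claim about $\pi$ on $\mathfrak{B}$ itself is false.

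Second, the identification $\mathcal{F}_\bullet=\mathfrak{B}_{[\bullet]}$ does not follow from $\mathfrak{B}_{[n]}\subseteq\mathcal{F}_n$ together with ``the dimension equality'': both associated gradeds have total dimension $\dim\mathfrak{B}=216$ automatically, so this constrains nothing level by level. The paper avoids the comparison entirely. After the coalgebra filtration bounds the coradical inside $\C$, it invokes Nichols--Zoeller and Remark~\ref{rmk-C} to conclude $\mathfrak{B}_{[0]}=\C$, then works directly with the diagram $R$ of the \emph{standard} filtration: since $x,y\in\mathfrak{B}_{[1]}$, formula \eqref{eQ-sh} shows $V_{1,j+3}\subset\Pp(R)$, and as $\dim R=\dim\mathfrak{B}/\dim\C=18=\dim\BN(V_{1,j+3})$ one obtains $R\cong\BN(V_{1,j+3})$. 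Alternatively, you can salvage your route by proving the \emph{reverse} inclusion $\mathcal{F}_n\subseteq\mathfrak{B}_{[n]}$ as well: since $x,y\in\mathfrak{B}_{[1]}$ and the standard filtration is an algebra filtration, every product of at most $n$ factors from $\{x,y\}$ with coefficients in $\C=\mathfrak{B}_{[0]}$ lies in $\mathfrak{B}_{[n]}$.
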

\begin{proof}
Let $\mathfrak{B}_{0}$ be the Hopf subalgebra of $\mathfrak{B}_{1,j+3}(\mu)$ generated by $a,b$. We claim that $\mathfrak{B}_{0}\cong \C$. Consider the Hopf algebra map $\psi:\C\mapsto \mathfrak{B}_{1,j+3}(\mu)$ given by the composition $\C\hookrightarrow T(V_{1,j+3})\sharp \C\twoheadrightarrow \mathfrak{B}_{1,j+3}(\mu)\cong T(V_{1,j+3})\sharp \C/J^{j+3}$, where $J^{j+3}$ is the ideal generated by the last four relations in Definition \ref{defiB12}. It is clear that $\psi(\C)\cong \mathfrak{B}_{0}$ as Hopf algebras. By Lemma \ref{lemDim}, $\dim\psi(\C)=\dim\mathfrak{B}_{0}=12$. Hence  $\psi(\C)\cong \C$, which implies that the claim follows.

We claim that $(\mathfrak{B}_{1,j+3}(\mu))_{[0]}\cong\C$. Let $\mathfrak{B}_{n}=\mathfrak{B}_{n-1}+\C\{y^i(xy)^jx^k,i+2j+k=n,i,k\in\I_{0,2},j\in\I_{0,1}\}$ for $n\in\I_{1,6}$. It follows by a direct computation that $\Delta(\mathfrak{B}_{n})\subset\sum_{i=0}^n\mathfrak{B}_{n-i}\otimes\mathfrak{B}_i$ using  \eqref{eqDef} and \eqref{eqDef-1214}, that is, $\{\mathfrak{B}_{n}\}_{n\in\I_{0,6}}$ is a coalgebra filtration of $\mathfrak{B}_{1,j+3}(\mu)$. Then by \cite[Proposition 4.1.2]{R11}, the coradical $(\mathfrak{B}_{1,j+3}(\mu))_{0}\subset \mathfrak{B}_{0}\cong\C$, which implies that $(\mathfrak{B}_{1,j+3}(\mu))_{[0]}\subset \mathfrak{B}_{0}\cong\C$. By the Nichols-Zoeller Theorem, $\dim (\mathfrak{B}_{1,j+3}(\mu))_{[0]}\in\{2,3,4,6,12\}$. By Remark 2.5, $\dim (\mathfrak{B}_{1,j+3}(\mu))_{[0]}\in\{8,12\}$. Then $\dim(\mathfrak{B}_{1,j+3}(\mu))_{[0]}=12=\dim\C$ and hence $(\mathfrak{B}_{1,j+3}(\mu))_{[0]}\cong\C$.

Therefore, $\gr\mathfrak{B}_{1,j+3}(\mu)\cong R_{1,j+3}\sharp\C$, where $R_{1,j+3}$ is the diagram of $\mathfrak{B}_{1,j+3}(\mu)$. Using the formula \eqref{eQ-sh}, it is easy to see that $V_{1,j+3}\subset\Pp(R_{1,j+3})$. Since $\dim R_{1,j+3}=18=\dim\BN(V_{1,j+3})$ by Lemma \ref{lemDim},  $R_{1,j+3}\cong\BN(V_{1,j+3})$ as Hopf algebras in $\CYD$. This completes the proof.
\end{proof}

\begin{pro}\label{proB12}
Let $A$ be a finite-dimensional Hopf algebra over $\C$ such that $\gr A\cong \BN(V_{1,j+3})\sharp\C$ for $j\in\{1,5\}$. Then $A\cong \mathfrak{B}_{1,j+3}(\mu)$  for some $\mu\in\K$.
\end{pro}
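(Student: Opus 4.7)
The plan is to mimic the deformation arguments used in Lemmas \ref{lemTwodimNichDeforma1}--\ref{lemTwodimNichDeforma3}, but now allowing for genuinely non-trivial lifts. First, since $(\gr A)_{[0]}\cong\C$, we can lift the generators $a,b$ of $\C$ to $A$ and lift a basis of the infinitesimal braiding to elements $x,y\in A_{[1]}$ whose images in $\gr^1 A=V_{1,j+3}\sharp\C$ are the chosen basis of $V_{1,j+3}$. The $\C$-coaction and $\C$-action on $V_{1,j+3}$ described in Propositions \ref{proYD-1}--\ref{proYD-2} (together with \eqref{eqSmash}) then force the cross relations $ax=\xi xa$, $bx=\xi xb$, $ay-\xi^2 ya=\Lam^{-1}xba^3$, $by-\xi^2 yb=xa^4$ to hold in $A$ up to elements of $A_{[0]}$; a computation analogous to those in the previous lemmas (testing against the coproduct and using $\Pp_{g,h}(\C)$) shows that the $A_{[0]}$-corrections must vanish, so these relations hold on the nose.

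Next, I would treat the four Nichols algebra relations of $\BN(V_{1,j+3})$ one at a time, listing them as
\begin{gather*}
r_1=x^3,\qquad r_2=\xi^{2j}x^2y+\xi^{4j}xyx+yx^2,\\
r_3=y^3+x^2y+yx^2+xyx,\qquad r_4=y^2x+xy^2+yxy.
\end{gather*}
For each $r_i$, I would compute $\Delta(r_i)$ in the free algebra using \eqref{eqDef-1214} to determine the skew-primitive type of its image in $A$. The expectation, guided by Definition \ref{defiB12}, is that
\begin{gather*}
\Delta(r_1)\equiv r_1\otimes 1+a^3\otimes r_1+(\text{term in }r_2),\quad \Delta(r_2)\equiv r_2\otimes 1+1\otimes r_2,\\
\Delta(r_3)\equiv r_3\otimes 1+1\otimes r_3,\quad \Delta(r_4)\equiv r_4\otimes 1+a^3\otimes r_4+(\text{term in }r_3),
\end{gather*}
modulo terms that are handled inductively. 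Thus $r_2$ and $r_3$ are forced to lie in $\Pp(A)=0$ and $\Pp_{1,1}(\C)=0$ respectively (after we show $r_3\in A_{[0]}$), while $r_1\in\Pp_{1,a^3}(A)=\K\{1-a^3,ba^2\}$ and $r_4\in\Pp_{1,a^3}(A)=\K\{1-a^3,ba^2\}$. Using the relations $ar_i=\chi(a)r_ia$ and $br_i=\chi(b)r_ib$ dictated by the $\C$-action on $V_{1,j+3}$, the allowed group-like/$ba^k$ combinations are pinned down: $r_1$ and the $(1-a^3)$-component of $r_4$ must vanish, and only the $ba$-type component of $r_4$ survives. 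This shows $r_1=0$ and leaves $r_4$ proportional to $ba^{5}$.

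Then the argument turns to $r_3$: the coproduct formula together with the already-established vanishing of $r_1,r_2$ shows that $r_3\in\Pp_{1,a^3}(A)=\K\{1-a^3,ba^2\}$, and commutation with $a$ forces $r_3=\mu(1-a^3)$ for some $\mu\in\K$. The crux is to show that the scalar $\mu$ appearing in $r_3$ and the scalar $\nu$ with $r_4=\nu\, ba^5$ are related by $\nu=-2\xi^2\mu$, which is dictated by the coassociativity constraint, namely the higher-order correction term linking $\Delta(r_4)$ to $r_3$ in the free algebra computation. Once this single relation is forced, $A$ surjects onto $\mathfrak{B}_{1,j+3}(\mu)$; since both have dimension $216$ by Lemma \ref{lemDim}, the surjection is an isomorphism. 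The main technical obstacle is exactly the coassociativity bookkeeping that ties $r_4$ to $r_3$ with the precise coefficient $-2\xi^2$; the other steps are mechanical once one is careful with the $\C^{cop}$ side of $\Delta$.
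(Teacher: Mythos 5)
Your overall strategy is the paper's: compute the coproducts of the four defining relations of $\BN(V_{1,j+3})$ inside $A$, locate each one in the appropriate space of skew-primitives of $\C$, use the algebra relations to pin down the surviving coefficients, and finish with the dimension count from Lemma \ref{lemDim}. However, the concrete skew-primitivity types you assign to the relations are wrong, and this is not a cosmetic slip: it makes the argument internally inconsistent. With the comultiplication \eqref{eqDef-1214} one finds (this is what the paper computes for $V_{1,2}$) that $r_2$ and $r_3$ are the $(1,a^3)$-skew-primitive ones, $\Delta(r_2)=r_2\otimes 1+a^3\otimes r_2$ and $\Delta(r_3)=r_3\otimes 1+a^3\otimes r_3$, while $r_1=x^3$ and $r_4$ are primitive modulo correction terms of the form $-ba^5\otimes r_2$ and $2\xi^2 ba^5\otimes r_3+ba^5\otimes r_2$; you have it exactly the other way around. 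If your assignment were correct, then $r_3\in\Pp(A)=0$ would force $\mu=0$, i.e.\ it would rule out every non-trivial lifting $\mathfrak{B}_{1,j+3}(\mu)$ with $\mu\neq 0$ --- and indeed your own later step, where you place $r_3$ in $\Pp_{1,a^3}(A)=\K\{1-a^3,ba^2\}$ and set $r_3=\mu(1-a^3)$, flatly contradicts your earlier claim that $r_3$ is primitive and hence zero. The whole deformation lives precisely in the fact that $r_3$ (and, through the coproduct coupling, $r_4$) is $(1,a^3)$-skew-primitive.

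A second, smaller gap: you assert that commutation with $a$ reduces $r_3$ from $\K\{1-a^3,ba^2\}$ to $\K\{1-a^3\}$. Because of the inhomogeneous cross relation $ay-\xi^2ya=\Lam^{-1}xba^3$, the element $r_3$ is not an $a$-eigenvector on the nose, and the paper does not argue this way; instead it writes $Y=\beta_1(1-a^3)+\beta_2ba^2$ and kills $\beta_2$ only at the end, by observing that $\Delta(Z+2\xi^2\beta_1 ba^5)$ carries a leftover term $2\xi^2 ba^5\otimes\beta_2 ba^2$ that is incompatible with $Z+2\xi^2\beta_1ba^5$ lying in $A_{[0]}$. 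You do correctly identify that the coefficient $-2\xi^2$ tying $r_4$ to $r_3$ is forced by the coproduct coupling, and your final surjection-plus-dimension-count step is the same as the paper's. To repair the argument you need to redo the coproduct computations and swap the roles of $\{r_1,r_4\}$ and $\{r_2,r_3\}$ throughout.
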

\begin{proof}
Assume that $V\cong V_{1,2}$. By Remark \ref{rmkB12B14}, $\gr A\cong \mathfrak{B}_{1,2}(0)$ . Let $X:=\xi^4 x^2y+yx^2+\xi^2 xyx$, $Y:=y^3+x^2y+yx^2+xyx$ and $Z:=y^2x+xy^2+yxy$ for simplicity. By \eqref{eqDef-1214}, we obtain that
\begin{align*}
\Delta(x^3)&=x^3\otimes 1+1\otimes x^3-ba^5\otimes X,\quad
\Delta(X)=X\otimes 1+a^3\otimes X,\\
\Delta(Y)&=Y\otimes 1+a^3\otimes Y,\quad
\Delta(Z)=Z\otimes 1+1\otimes Z +2\xi^2ba^5\otimes Y+ba^5\otimes X.
\end{align*}
It follows that $X,\ Y\in\Pp_{1,a^3}(A)=\Pp_{1,a^3}(\C)$, that is, there exist $\alpha_1,\alpha_2,\beta_1,\beta_2\in\K$  such that
\begin{gather}
\notag X=\alpha_1(1-a^3)+\alpha_2ba^2,\quad Y=\beta_1(1-a^3)+\beta_2ba^2,\quad  \text{which implies that}\\
\Delta(x^3-\alpha_1ba^5)=(x^3-\alpha_1ba^5)\otimes 1+1\otimes(x^3-\alpha_1ba^5)-ba^5\otimes\alpha_2ba^2.\label{eqLV12-1-1}
\end{gather}
Since $ax^3=x^3a$ and $bx^3=x^3b$, similar to the proof of Lemma \ref{lemTwodimNichDeforma2}, a tedious calculation on $A_{[2]}$ shows that $x^3=0$  in $A$ and hence \eqref{eqLV12-1-1} holds only if $\alpha_1=0=\alpha_2$. Therefore, $X=0$ in $A$.  Then
\begin{align}
\Delta(Z+2\xi^2\beta_1ba^5)&=(Z+2\xi^2\beta_1ba^5)\otimes 1+1\otimes(Z+2\xi^2\beta_1ba^5)+ 2\xi^2ba^5\otimes \beta_2ba^2.\label{eqLV12-1-2}
\end{align}
Since $aZ=\xi^5Za$,  a tedious calculation on $A_{[2]}$ shows that $Z+2\xi^2\beta_1ba^5=\sum_{i=0}^5\lambda_iba^i$ for some $\lambda_i\in\K$, which implies that
\begin{align}\label{eqLV12-1-31}
\Delta(\sum_{i=0}^5\lambda_iba^i)&=(\sum_{i=0}^5\lambda_iba^i)\otimes 1+1\otimes(\sum_{i=0}^5\lambda_iba^i)+ 2\xi^2ba^5\otimes \beta_2ba^2.
\end{align}
It follows that $\lambda_i=0$ for $i\in\I_{0,5}$ and hence \eqref{eqLV12-1-31} holds only if $\beta_2=0$. Therefore,
\begin{align*}
Y=\beta_1(1-a^3),\quad Z=-2\xi^2\beta_1ba^5.
\end{align*}
Since the defining relations of $\mathfrak{B}_{1,2}(\beta_1)$ hold in $A$,  there exists an epimorphism from $\mathfrak{B}_{1,2}(\beta_1)$ to $A$. Since $\dim A=\dim\gr A=216=\dim \mathfrak{B}_{1,2}(\beta_1)$ by Lemma \ref{lemDim},  $A\cong\mathfrak{B}_{1,2}(\beta_1)$.

The proof for $V\cong V_{1,4}$ follows the same line as for $V\cong V_{1,2}$.
\end{proof}

Finally, we are able to prove Theorem \ref{thmB}.
\begin{proofthmb}
Since $\gr A\cong R\sharp\C$ with $R(1)=V$, by Theorem \ref{thmA}, $V$ is simple and isomorphic
 to $\K_{\chi^{k}}$ for $k\in\{1,3,5\}$ or $V_{i,j}$ for $(i,j)\in\Lambda-\Lambda\As$. By Theorem $\ref{thmNicholsgeneratedbyone}$, $R\cong\BN(V)$. If $V$ is isomorphic to $\K_{\chi^i}$ for $i\in\{1,3,5\}$ or $V_{i,j}$ for $(i,j)\in\Lambda-\{(1,2),(1,4)\}-\Lambda\As$, then by Lemmas \ref{lemOnedimNichDeforma}--\ref{lemTwodimNichDeforma3}, $A\cong\BN(V)\sharp\C$. If $V\cong V_{1,j+3}$ for $j\in\{1,5\}$, then by Proposition \ref{proB12}, $A\cong\mathfrak{B}_{1,j+3}(\mu)$ for some $\mu\in\K$.

 We claim that $\BN(\K_{\chi^k})\sharp\C$ with $k\in\{1,3,5\}$ are pairwise non-isomorphic. Since  $(\BN(\K_{\chi^k})\sharp\C)\As\cong\BN(X)\sharp\A_1$ $($see for example \cite[section 2.2]{AG99}$)$, $\dim\BN(X)=2$, which implies that $\dim X=1$. Since $\CYD\cong{}_{\A_1}^{\A_1}\mathcal{YD}$, by Theorem \ref{thmA}, it follows that  $X\cong F(\K_{\chi^{n}})$ for $n\in\{1,3,5\}$. Then the claim follows by Remark \ref{rmk-Basic-D}. Since the braidings of $ \K_{\chi}$ and $V_{i,j}$ for $(i,j)\in\Lambda-\Lambda\As$ are pairwise different,
 the Hopf algebras from different families are non-isomorphic.
\end{proofthmb}

\vskip10pt \centerline{\bf ACKNOWLEDGMENT}

\vskip10pt This paper was written during the visit of the author to University of Padova supported by China Scholarship Council (No.~201706140160). The author is indebted to his  supervisors Profs. Giovanna Carnovale and Naihong Hu so much for the kind help and continued encouragement. The author is grateful to Prof. G.-A. Garcia  for helpful conversations and comments and also thanks Profs. G.-A. Garcia and L. Vendramin for providing the computation with GAP of Nichols algebras. The author would like to thank  the referee for careful reading and helpful suggestions that largely improved the exposition.



\begin{thebibliography}{50}
\bibitem[AA18]{AA18}N.~Andruskiewitsch, I.-E.~Angiono, On Nichols algebras over basic Hopf algebras, arXiv:1802.00316.
\bibitem[AB04]{AB04} N. Andruskiewitsch and M. Beattie, Irreducible representations of liftings of quantum planes, Lie theory and its applications in physics V (2004), 414--423.
\bibitem[AC13]{AC13}N.~Andruskiewitsch and J.~Cuadra, On the structure of (co-Frobenius) Hopf algebras, J.~ Noncomm. Geom. 7 (1) (2013), 83--104.
\bibitem[AG99]{AG99}N.~Andruskiewitsch, M.~Gra$\widetilde{n}$a, Braided Hopf algebras over non abelian finite group, Bol.~Acad.~ Nac.~Cienc.~Cordoba 63 (1999), 46--78.
\bibitem[AGi17]{AGi17}N.~Andruskiewitsch, J.-M.-J.~Giraldi, Nichols algebras that are quantum planes, 	Linear and
Multilinear Algebra, 66:5 (2018), 961--991.
\bibitem[AHS10]{AHS10}N.~Andruskiewitsch, I.~Heckenberger, H.-J.~Schneider, The Nichols algebra of a semisimple Yetter-Drinfeld module, Amer. J. Math. 132 (6) (2010), 1493--1547.
   Math J.~25 (1) (2001), 187--201.
\bibitem[ARS95]{ARS95}M.~Auslander, I.~Reiten and S.-O.~Smal{\o}, Representation theory of Artin algebras, Cambridge
   Studies in Adv. Math., 36. Cambridge University Press, Cambridge, 1995.
\bibitem[AS98b]{AS98}N.~Andruskiewitsch, H.-J.~Schneider, Lifting of quantum linear spaces and pointed Hopf algebras of order $p^3$, J.~Algebra 209 (1998), 658--691.
\bibitem[AS02]{AS02}N.~Andruskiewitsch, H.-J.~Schneider, Pointed Hopf algebras, New directions in Hopf algebras, 1--68, Math.~Sci.~Res.~Inst.~Publ.~, 43, Cambridge Univ. Press, Cambridge, 2002.
\bibitem[B]{B}G. Bergman, The diamond lemma for ring theory, Adv. Math., 29 (1978), 178--218.
\bibitem[BG13]{BG13}M.~Beattie, G.-A.~Garcia, Classifying Hopf algebras of a given dimension, Contemp. Math. 585 (2013) 125--152.
\bibitem[CMLS17]{CMLS17}H. Chen, H. Mohammed, W. Lin, H. Sun, The projective class rings of a family of pointed Hopf algebras of Rank two, arXiv:1709.08782.
\bibitem[G00]{G00}M.~Gra$\widetilde{n}$a, Freeness theorem for Nichols algebras, J. Algebra 231 (1) (2000), 235--257.
\bibitem[GG16]{GG16}G.-A.~Garcia, J.-M.-J.~Giraldi, On Hopf algebra over quantum subgroups, arXiv:1605.~03995, J. Pure Appl. Algebra, to appear.
\bibitem[GM07]{GM07} L. Grunenfelder and M. Mastnak, Pointed and copointed Hopf algebras as cocycle deformations, arxiv:0709.0120.
\bibitem[GM10]{GM10} ---, Pointed Hopf algebras as cocycle deformations, arXiv:1010.4976.
\bibitem[H09]{H09}I.~Heckenberger, Classification of arithmetic root systems, Adv. Math.,  220 (1) 2009, 59--124.
\bibitem[Hi93]{Hi93} J. Hietarinta, Solving the two-dimensional constant quantum Yang-Baxter equation, J. Math. Phys. 34 (1993), 1725--1756.
\bibitem[HX16]{HX16}N.~Hu, R. Xiong, Some Hopf algebras of dimension 72 without the Chevalley property,  arXiv:1612.04987.
\bibitem[HX17]{HX17}---, Eight classes of new Hopf algebras of dimension 128 without the Chevalley property,  arXiv:1701.01991.
\bibitem[I10]{I10}A.-G. Iglesias, Representations of pointed Hopf algebras over $S_3$, Revista
de la Uni\'{o}n Matem\'{a}tica Argentina 51 (2010), 51--77.
\bibitem[M93]{M93}S.~Montgomery, Hopf Algebras and their Actions on Rings, CMBS Reg. Conf. Ser. in Math. 82,
    Amer.~Math.~Soc. 1993.
\bibitem[MO99]{MO99} S. Majid and R. Oeckl, Twisting of quantum differentials and the Planck scale Hopf algebra, Comm. Math. Phys. 205 (3) (1999), 617--655.
\bibitem[N78]{N78}W.-D.~Nichols, Bialgebras of type one, Comm. Algebra 6 (15) (1978), 1521--1552.
\bibitem[Na02]{Na02}S.~Natale, Hopf algebras of dimension $12$, Algebr.~Represent.~Theory 5 (5) (2002), 445--455.
\bibitem[R75]{R75}D.-E. Radford, On the coradical of a finite-dimensional Hopf algebra, Proc. Amer. Math. Soc. 53
(1975), 9--15.
\bibitem[R85]{R85}D.-E.~Radford, The structure of Hopf algebras with a projection, J. Algebra
 92 (2) (1985), 322--347.
\bibitem[R11]{R11}---, Hopf Algebras, Knots and Everything 49, World Scientific, 2011.
\bibitem[U07]{U07}S.~Ufer, Triangular braidings and pointed Hopf algebras, J. Pure Applied Algebra 210 (2)
(2007), 307--320.
\bibitem[X18]{X18}R. Xiong, Finite-dimensional Hopf algebras over the smallest non-pointed basic Hopf algebra,  arXiv:1801.06205.
\end{thebibliography}
\end{document}